\newtheorem{thm}{Theorem}
\newtheorem{cor}[thm]{Corollary}
\newtheorem{lem}[thm]{Lemma}
\newtheorem{defn}[thm]{Definition}
\newtheorem{rem}[thm]{Remark}
\newtheorem{alg}{Algorithm}
\numberwithin{equation}{section}
\newcommand{\R}{\mathcal{R}}
\newcommand{\RR}{\mathbb{R}}
\newcommand{\CC}{\mathbb{C}}
\renewcommand{\L}{\mathcal{L}}
\renewcommand{\P}{\mathcal{P}}
\newcommand{\NN}{\mathbb{N}}
\newcommand{\N}{\mathbb{N}}
\newcommand{\Z}{\mathbb{Z}}
\newcommand{\eps}{\epsilon}
\newcommand{\dps}{\displaystyle}
\renewcommand{\Re}{\mathrm{Re}}
\newcommand{\emat}{\end{pmatrix}}
\DeclareMathOperator{\Id}{{\rm Id}}
\newcommand{\G}{\mathcal{G}}
\newcommand{\F}{\mathcal{F}}
\newcommand{\C}{\mathcal{C}}
\begin{document}


\title{A micro-macro parareal algorithm: application to singularly perturbed ordinary differential equations}

\author{Fr\'ed\'eric Legoll\thanks{Laboratoire Navier, \'Ecole Nationale des Ponts et Chauss\'ees,
Universit\'e Paris-Est, 6 et 8 avenue Blaise Pascal, 77455
Marne-La-Vall\'ee Cedex 2, France; INRIA Rocquencourt, MICMAC team-project, Domaine de Voluceau, B.P. 105,
78153 Le Chesnay Cedex, France
}
\and Tony Leli\`evre\thanks{CERMICS, \'Ecole Nationale des Ponts et Chauss\'ees, Universit\'e
Paris-Est, 6 et 8 avenue Blaise Pascal, 77455 Marne-La-Vall\'ee Cedex 2,
France; INRIA Rocquencourt, MICMAC team-project, Domaine de Voluceau, B.P. 105,
78153 Le Chesnay Cedex, France}
\and Giovanni Samaey\thanks{Scientific Computing, Department of Computer Science, KU Leuven,
Celestijnenlaan 200A, 3001 Leuven, Belgium
}
Scientific Computing, Department of Computer Science, KU Leuven,
Celestijnenlaan 200A, 3001 Leuven, Belgium}

\maketitle

\begin{abstract}
We introduce a micro-macro parareal algorithm for the time-parallel
integration of multiscale-in-time systems. The algorithm first computes
a cheap, but inaccurate, solution using a coarse propagator (simulating an
approximate slow macroscopic model), which is iteratively corrected
using a fine-scale propagator (accurately simulating the full
microscopic dynamics). This correction is done in parallel over many
subintervals, thereby reducing the wall-clock time needed to obtain the
solution, compared to the integration of the full microscopic model over
the complete time interval.
We provide a numerical analysis of the algorithm for a prototypical
example of a micro-macro model, namely singularly perturbed ordinary
differential equations. We show that the computed solution are better
and better approximations of the full microscopic solution (when the parareal iterations proceed)
only if special care is taken during the coupling of the microscopic and
macroscopic levels of description. The error bound depends on the
modeling error of the approximate macroscopic model. We illustrate these
results with numerical experiments.
\end{abstract}

\maketitle

\section{Introduction}

In many applications, a system is modeled using a high-dimensional system of
differential equations that captures phenomena occurring at multiple time
scales. Unfortunately, the computational cost of simulating such fine-scale
systems (which we call \emph{microscopic} in this work) on macroscopic
time intervals is prohibitive, and one often resorts to
low-dimensional, coarse-grained, effective models (which we call
\emph{macroscopic}), in which the fast 
degrees of freedom are eliminated. Many methods have been proposed to
obtain such macroscopic models, either analytically (see
e.g.~\cite{Pavliotis2008} for a recent overview) or numerically.
We refer, for instance, to the work on 
equation-free~\cite{KevrGearHymKevrRunTheo03,Kevrekidis:2009p7484} or
heterogeneous multiscale methods~\cite{EEng03,E:2007p3747}, and
references therein. However, by construction, these macroscopic models
only capture the original full microscopic dynamics {\em approximately}. 

Here, we present and analyze a numerical
multiscale method that aims at efficiently simulating the full
microscopic dynamics (and not a macroscopic approximation of it)
over long time intervals, using an effective (approximate) macroscopic
model as a predictor and the microscopic model as a corrector.
To this end, we propose a micro-macro version of the parareal
algorithm~\cite{Lions2001}. The parareal algorithm was originally proposed to
solve time-dependent problems using computations in parallel, aiming at
exploiting the presence 
of multiple processors to reduce the real (wall-clock) time needed to obtain
a solution on a long time interval. It is based on a decomposition of
the time interval into subintervals, and makes use of a
predictor-corrector strategy, in which the calculation of the
corrections is performed concurrently on the different processors that
are available. In what follows, we propose a version of this
algorithm well-adapted to our multiscale-in-time context. 

For the sake of clarity, and to
better describe our aim, we now present the
parareal algorithm in some detail. To fix the ideas, assume that the
problem at hand is
\begin{equation}
\label{eq:problem-1order}
\frac{du}{dt} = f(u), \quad u(0) = u_0, \quad u(t) \in \RR^d,
\quad t \in [0,T],
\end{equation}
the exact flow of which is denoted $u(t) = \mathcal{E}_t(u_0)$. Suppose that 
we have at hand two propagators to integrate~\eqref{eq:problem-1order},
$\F_{\Delta t}$ and $\G_{\Delta t}$. The propagator
$\F_{\Delta t}$ is a fine, expensive propagator, which accurately
approximates the exact flow $\mathcal{E}_{\Delta t}$ over the time range
$\Delta t$,
whereas the propagator $\G_{\Delta t}$ is a coarse propagator, 
which is a less accurate approximation of the exact flow. In turn,
$\G_{\Delta t}$ is less expensive to 
simulate than $\F_{\Delta t}$.
%
For example, $\F_{\Delta t}$ and $\G_{\Delta t}$ may correspond to
integrating~\eqref{eq:problem-1order} over the time range $\Delta t$
with a given discretization scheme, using either a small time step (for
$\F_{\Delta t}$) or a large time step (for $\G_{\Delta t}$). 
The parareal algorithm iteratively constructs a sequence of $N$-tuples
${\bf u}_k \equiv \left\{ u^n_k \right\}_{1 \leq n \leq N}$ (with $N =
T/\Delta t$), such that, at every iteration $k\ge 0$, $u^n_k$ is an
approximation of $u(n \Delta t)$. For $k=0$, the initial approximation is
obtained using the coarse propagator $\G_{\Delta t}$:
$$
u^{n+1}_{k=0} = \G_{\Delta t}(u^n_{k=0}), \quad
u^0_{k=0} = u_0.
$$
In the subsequent parareal iterations, the approximation is corrected
using  
\begin{equation}
\label{eq:scheme-parareal}
u^{n+1}_{k+1} = \G_{\Delta t}(u^n_{k+1}) +  
\F_{\Delta t}(u^n_k)
- \G_{\Delta t}(u^n_k),
\end{equation}
with the initial condition $u^0_{k+1} = u_0$. The solution
to~\eqref{eq:scheme-parareal} can be very efficiently computed using
the following procedure.
Once the solution at parareal iteration $k$ has been computed, 
we first compute the corrections 
$\F_{\Delta t}(u^n_k) - \G_{\Delta t}(u^n_k)$
in {\em parallel} over each subinterval $[n \Delta t,(n+1) \Delta t]$, 
$0 \leq n \leq N-1$. We then only need to propagate
these corrections sequentially, by adding  $\G_{\Delta t}(u^n_{k+1})$
to the stored correction 
$\F_{\Delta t}(u^n_k) - \G_{\Delta t}(u^n_k)$. This yields the
solution at parareal iteration $k+1$.
 
It
has been shown (see
e.g.~\cite{baffico2002parallel,bal23parareal,bal2005convergence,Lions2001,maday-08}) that,
when $k$ goes to infinity, the parareal solution converges to the
reference solution, namely the solution given by the fine-scale
propagator $\F_{\Delta t}$ used in a 
sequential fashion from the initial condition: 
\begin{equation}
\label{eq:conv}
\forall n, \ 0 \leq n \leq T/\Delta t, \quad
\lim_{k \to \infty} u^n_k = \F_{\Delta t}^{(n)}(u_0).
\end{equation}
The computational gain of the parareal algorithm stems from the fact
that, in~\eqref{eq:scheme-parareal}, the accurate simulations (using the fine-scale propagator $\F_{\Delta t}$)
are decoupled one from each other, and can therefore be executed in
parallel on different processors. Suppose that the cost of a single
evaluation of $\F_{\Delta t}$ is much larger than the cost of propagating
the system according to $\G_{\Delta t}$ over the complete time range
$[0,T]$. 
Assuming the cost of the
fine-scale propagator $\F_{\Delta t}$ to be proportional to $\Delta t$,
the cost of $K$ iterations of the parareal algorithm is proportional to
$K\Delta t$. This cost is to be compared to the cost of computing the
reference solution using the fine-scale propagator sequentially, which
is proportional to $N\Delta t$. The computational speed-up is thus
$N/K$, which is larger than one if the number $K$ of parareal iterations
to obtain convergence in~\eqref{eq:conv} is small enough. 

In this article, we propose and analyze a micro-macro version of the
parareal algorithm. We assume that the variables in the 
microscopic model can be split into slow and fast components, and that 
we have at hand an approximate macroscopic model for the slow
components under some time scale separation
assumption (see Section~\ref{sec:model} for the precise model we
consider here). In this setting, we will use the parareal algorithm 
where the fine-scale propagator $\F_{\Delta t}$ is an integrator
for the high-dimensional microscopic model, whereas the coarse
propagator, here denoted $\C_{\Delta t}$, is an 
integrator of the {\em low-dimensional}, approximate macroscopic model
(we use the notation $\C_{\Delta t}$ rather than $\G_{\Delta t}$ to
emphasize the fact that our coarse integrator acts on a system of
smaller dimension than the reference one). The
novelty therefore is to simultaneously use two models at different
levels of description, rather than two discretizations of the same
model. The cost of the coarse propagator is typically negligible
for two reasons:
(i) the macroscopic model
only contains the slow components of the evolution, and therefore allows
for a larger time step; and (ii) the macroscopic model is
low-dimensional, and therefore requires less work per time
step.
Again, the
aim of the micro-macro parareal method is to speed up
the computations (compared to a full microscopic simulation) by
allowing the microscopic simulations starting from the different
intermediate time instances $n \Delta t$ to be performed in parallel
over each subinterval $[n \Delta t,(n+1) \Delta t]$, with $0 \leq n \leq N-1$. 

As a model problem, we take the setting of singularly perturbed systems
of ODEs. Such a model problem is a widely accepted first test case when
proposing algorithms for problems with time-scale separation, see
e.g.~\cite{Givon2004}. We perform a numerical analysis of the algorithm
we propose in a linear setting (see Section~\ref{sec:model} for the
description of the model problem, and Section~\ref{sec:analysis} for the
numerical analysis), and illustrate these results by numerical
simulations in Section~\ref{sec:num}. However, our algorithm is {\em not}
restricted to the linear setting, and we numerically observe in
Section~\ref{sec:nonlin} that it performs equally well on a
nonlinear test-case. 

\medskip

Since its introduction in~\cite{Lions2001}, the parareal strategy has
been applied to a wide range of problems, including fluid-structure
interaction~\cite{farhat2003time}, Navier--Stokes equation
simulation~\cite{fischer2005parareal}, reservoir
simulation~\cite{garrido2005}, etc.
The algorithm has been further analyzed
in~\cite{maday2002parareal,maday2005parareal}. Its stability has been
investigated in~\cite{bal2005convergence,staff2005stability}. An
alternative formulation of the algorithm has been proposed
in~\cite{bal23parareal}, or, equivalently, in~\cite{baffico2002parallel} 
in a simplified setting. We refer to~\cite{gander2007analysis} for a
reformulation in a more general setting that relates the parareal
strategy to earlier time-parallel algorithms, such as multiple shooting
(see e.g.~\cite{keller1968numerical,nievergelt1964parallel}) or
multigrid waveform relaxation (see
e.g.~\cite{lubich1987multi,vandewalle1992efficient}) approaches. Several
variants of the algorithm have been proposed, for instance
in~\cite{dai2010symmetric,farhat2003time,garrido2006convergent} (see
also~\cite{bal2003parallelization} in the context of
stochastic differential equations). The numerical
analysis of the algorithm has been first performed for linear
initial-value problems. A numerical analysis in a nonlinear context has
been proposed in~\cite{gander2008nonlinear}.

A micro-macro version of the parareal algorithm, similar to what is
presented in this article, 
has already been considered in a number of works. The authors
of~\cite{BBK,maday41parareal} consider a singularly perturbed system of
ordinary differential equations (ODEs) at the microscopic level and the
limiting differential-algebraic equation (DAE) at the macroscopic
level. In these two works, the coarse propagator contains {\em all} degrees of
freedom in the system. The slow degrees of freedom are evolved according
to a differential equation, and the fast degrees of freedom are evolved
using algebraic constraints (they somehow instantaneously adapt to the
values of the slow degrees of freedom). In contrast, our approach
completely eliminates the fast variables from the coarse propagator, and
only evolves the slow variables. This difference has a number of
consequences:
\begin{itemize}
\item The coarse propagator in the algorithms proposed here is cheaper
than that of~\cite{BBK,maday41parareal} (because it contains less
degrees of freedom) and more convenient (because the coarse propagator
simulates an ODE rather than a DAE); 
\item The algorithms proposed here require operators to reconstruct
  microscopic states from macroscopic ones, while the algorithm
  in~\cite{BBK,maday41parareal} can simply use the parareal
  iteration~\eqref{eq:scheme-parareal}. This also influences the
  convergence behavior.
\end{itemize}
A detailed comparison between our algorithms and that proposed
in~\cite{BBK,maday41parareal} is given in
Section~\ref{sec:compare_scheme}.

Other micro-macro parareal algorithms have also been proposed, in
contexts different from ours. In~\cite{engblom2009parallel}, a parareal
algorithm for multiscale stochastic chemical kinetics is presented, in
which the macroscopic level uses the mean-field limiting
ODE. In~\cite{mitran2010time}, the parareal algorithm is used with
a kinetic Monte Carlo model at the macroscopic level and molecular
dynamics at the microscopic level. 

\medskip

Our article is organized as follows. In
Section~\ref{sec:model}, we present the singularly perturbed ODE that
is considered here as a model problem, and state some bounds on its
solution (The proof of these bounds is postponed until
Appendix~\ref{sec:appendix}). Subsequently, in Section~\ref{sec:parareal}, we
introduce two micro-macro parareal algorithms.  The coupling between the
microscopic and macroscopic levels of description is done using a
\emph{restriction} operator (to go from the microscopic
to the macroscopic level), and either a \emph{lifting} (Algorithm 1) or a
\emph{matching} (Algorithm 2) operator (to go from the macroscopic
to the microscopic level). This coupling ensures that the numerical solution
remains consistent across both levels of description (see
Section~\ref{sec:parareal}).
The two algorithms we introduce in Section~\ref{sec:parareal-alg} only
differ in how the levels of description are coupled to each
other. Algorithm~\ref{algorithm-lifting}
will turn out to be inaccurate, whereas 
Algorithm~\ref{algorithm-matching} is extremely accurate. For the sake
of comparison, we discuss in Section~\ref{sec:compare_scheme} the scheme
proposed in~\cite{BBK,maday41parareal}, that we denote here Algorithm~3.
Section~\ref{sec:analysis} contains a detailed numerical
analysis of these three algorithms, when applied to the
linear model problem presented in Section~\ref{sec:model}, and when the
dynamics at both microscopic and macroscopic levels of description are
exactly integrated.
This setting enlightens the effect of how the two levels of description
are coupled on the convergence of the algorithms. We show how the
modeling error of the approximate macroscopic model affects the
accuracy. In particular, the micro-macro parareal algorithm we introduce
is a precise approximation of
the full microscopic solution only if special care is taken during the
coupling of the microscopic and macroscopic levels of description, as is
done in Algorithm~\ref{algorithm-matching}.
The analysis is illustrated by numerical experiments in
Section~\ref{sec:num}, where, in addition, we  
numerically investigate the effect of time
discretization. Some numerical results on nonlinear problems are
presented in Section~\ref{sec:nonlin}. We observe there the same good
properties of Algorithm~\ref{algorithm-matching} as on linear problems.
We conclude in Section~\ref{sec:concl} with some final remarks and a
discussion of possible future research. 

\section{Model problem\label{sec:model}}

In this section, we describe the microscopic model problem considered in
this work, as well as its macroscopic limit.

Consider the dynamics
\begin{equation}
\label{eq:lin_micro}
\dot{x} = \alpha x + p^T y, 
\quad
\dot{y} = \dfrac{1}{\epsilon}\left(q x - A y\right),
\end{equation}
where $x \in \RR$ and $y\in\RR^{d-1}$ are the state variables, and
$\alpha \in \RR$, $p \in \RR^{d-1}$, $q \in \RR^{d-1}$ and $A \in
\RR^{(d-1)\times(d-1)}$ are parameters. This dynamics models the
evolution of a system 
described by the state variable $u =(x,y) \in \RR^d$, where the slow and
fast components are $x$ and $y$, respectively. We denote the initial condition by
$u(0)=(x(0),y(0))=(x_0,y_0)=u_0$. The dynamics can
be compactly written as
\begin{equation}
\label{eq:lin_micro_short}
\dot{u}=B^\epsilon u,
\end{equation}
where
$$ 
B^\epsilon = \begin{bmatrix}
\alpha & p^T \\
q/\epsilon & -A/\epsilon
\end{bmatrix}.
$$
In the following, we assume that the fast component of the system has a
simple dissipative structure:
\begin{equation}
\label{ass:expo-stable}
\begin{array}{c}
\text{We assume $A$ to be a matrix with eigenvalues
$\lambda_i \in \CC$ ($1 \le i \le d-1$)}
\\ \noalign{\vskip 3pt}
\text{satisfying $\Re (\lambda_i) \geq \lambda_-$ for any $1 \leq i \leq
d-1$, for some $\lambda_->0$.}
\end{array}
\end{equation}
Under this assumption, for each fixed value $x=x^\star$ of the slow component, the dynamics of~$y$, obeying the equation 
\[
\dot{y} = \dfrac{1}{\epsilon}\left(q x^\star - A y\right),
\]
satisfies
$$
\lim_{t\to\infty} y(t)= \left( A^{-1} q \right) x^\star.
$$
The dynamics of the fast component $y$, for fixed slow component
$x=x^\star$, is thus exponentially stable for all $x^\star$. It is then
known (see Lemma~\ref{lem:slaving} below and, for example, \cite{Pavliotis2008} and references therein) that, in
the limit $\eps$  goes to zero, the solution $x(t)$ to~\eqref{eq:lin_micro}
converges, on finite time intervals, to the solution $X(t)$ of
\begin{equation}\label{eq:lin_macro}
\dot{X}=\lambda X, \qquad X(0)=x_0, \qquad 
\lambda := \alpha + p^T A^{-1} q.
\end{equation}
Comparing~\eqref{eq:lin_micro} with~\eqref{eq:lin_macro}, one can see
that the microscopic time-scale (namely the typical time-step required
to integrate the full microscopic dynamics~\eqref{eq:lin_micro}) is of
the order of $\epsilon$, whereas the macroscopic time-scale (namely the
typical time-step required to integrate the approximate macroscopic
dynamics~\eqref{eq:lin_macro}) is independent of $\epsilon$. 

\smallskip

\begin{rem}
The asymptotic result that we mentioned above on the
system~\eqref{eq:lin_micro} holds for more general cases. For instance,
consider the dynamics 
\begin{equation}\label{eq:micro}
\dot{x} = f(x,y), 
\quad
\dot{y} = \dfrac{1}{\epsilon}\left(\eta(x) - A y\right),
\end{equation}
with again $x \in \RR$, $y \in \RR^{d-1}$ and $A \in
\RR^{(d-1)\times(d-1)}$, and where $f : \RR \times \RR^{d-1} \to \RR$
and $\eta:\RR \to \RR^{d-1}$ are two given, possibly nonlinear
functions. Under Assumption~\eqref{ass:expo-stable}, 
the solution $x(t)$ to~\eqref{eq:micro} converges to $X(t)$, solution to 
$$
\dot{X}=F(X), \qquad X(0)=x_0, \qquad
F(X) = f(X,A^{-1}\eta(X)).
$$
This result can also be extended to more general nonlinear cases~\cite{Pavliotis2008}. 
\end{rem}

\smallskip

For future reference, we introduce the exact time evolution operators,
$$
u(t^*+\Delta t) = \Phi_{\Delta t}\left(u(t^*)\right),
\quad
X(t^*+\Delta t) = \rho_{\Delta t}\left(X(t^*)\right),
$$
corresponding to~\eqref{eq:lin_micro_short} and~\eqref{eq:lin_macro},
respectively. These equations are linear, hence the operators
$\Phi_{\Delta t}$ and $\rho_{\Delta t}$ are linear:
\begin{align}
\Phi_{\Delta t} &= \exp(B^\epsilon\Delta t) \in \RR^{d \times d},
\label{eq:lin_micro_ex}\\
\rho_{\Delta t} &= \exp(\lambda  \Delta t) \in \RR.
\label{eq:lin_macro_ex}
\end{align}

We now state some bounds on the solutions of~\eqref{eq:lin_micro}, that
will be useful in Section~\ref{sec:analysis}, when proving error bounds
on the algorithms we propose. 

\smallskip

\begin{lem}
\label{lem:slaving}
Consider the linear system~\eqref{eq:lin_micro} over the time range
$[0,T]$, with the initial condition 
$x(0)=x_0$, $y(0)=y_0$. Introduce $z(t)=y(t)-A^{-1}q\,x(t) \in
\RR^{d-1}$ and $z_0 = z(0)$. 
Under Assumption~\eqref{ass:expo-stable}, there exist
$\epsilon_0 \in (0,1)$ and $C>0$, that both only
depend on $A$, $q$, $p$, $\alpha$ and $T$, such that, for all
$\epsilon < \epsilon_0$, 
\begin{align}
\sup_{t \in [0,T]} |x(t)-x_0 \exp(\lambda t)| &\le C\epsilon (|x_0|+\|z_0\|), 
\label{eq:x-est-lem} 
\\
\sup_{t \in [0,T]} \|z(t) - \exp\left(-A t/\epsilon \right) z_0 \| & \le 
C \eps \left( \left| x_0 \right| + \left\| z_0 \right\|\right).
\label{eq:z-est-lem-bl}
\end{align}
Set 
\begin{equation}
\label{eq:def_t_BL}
t^{\rm BL}_\epsilon = \frac{2\epsilon}{\lambda_-} \ln(1/\epsilon).
\end{equation} 
Then, for all
$\epsilon<\epsilon_0$, we have 
\begin{equation}
\sup_{t \in [t^{\rm BL}_\epsilon,T]} \|z(t)\| \le C \epsilon \left( \left| x_0
  \right| + \left\| z_0 \right\|\right). 
\label{eq:z-est-lem}
\end{equation}
Hence, up to a boundary layer of size $t^{\rm BL}_\eps$, $z(t)$ is of order
$\epsilon$, and the state $u(t)$ of the system is at a distance of the order
of $\epsilon$ of the manifold
\begin{equation}
\label{eq:def_slow_manifold} 
\Sigma := \left\{ u=(x,y) \in \RR^d; \ y = A^{-1} q\,x \right\}.
\end{equation} 
\end{lem}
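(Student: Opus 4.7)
The strategy would be to pass to slow-manifold coordinates, in which the singular structure of~\eqref{eq:lin_micro} decouples, and then combine Duhamel's formula with a Gronwall-type closure. Setting $z = y - A^{-1} q\, x \in \RR^{d-1}$, a direct computation using $\dot{y} = -A z/\epsilon$ yields the equivalent system
\begin{equation*}
\dot{x} = \lambda x + p^T z, \qquad \dot{z} = -\frac{1}{\epsilon} A z - A^{-1} q\,(\lambda x + p^T z),
\end{equation*}
with $\lambda = \alpha + p^T A^{-1} q$ as in~\eqref{eq:lin_macro}. Under Assumption~\eqref{ass:expo-stable}, a Jordan-form argument furnishes, for any fixed $\mu \in (0, \lambda_-)$, a constant $C_\mu$ depending only on $A$ and $\mu$ such that $\|\exp(-A t/\epsilon)\| \le C_\mu \exp(-\mu t/\epsilon)$ for all $t \ge 0$ and all $\epsilon > 0$; we will fix $\mu = \lambda_-/2$ in the sequel.

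The next step is an $\epsilon$-uniform a priori bound. Duhamel's formula applied to the two equations above reads
\begin{align*}
x(t) &= e^{\lambda t} x_0 + \int_0^t e^{\lambda(t-s)}\, p^T z(s)\,ds, \\
z(t) &= e^{-At/\epsilon} z_0 - \int_0^t e^{-A(t-s)/\epsilon}\, A^{-1} q\,(\lambda x(s) + p^T z(s))\,ds.
\end{align*}
Bounding $e^{\lambda(t-s)}$ by $e^{|\lambda|T}$ on $[0,T]$ and using $\int_0^t \|e^{-A(t-s)/\epsilon}\|\,ds \le 2 C_\mu \epsilon/\lambda_-$, one obtains, for $W(t) := \sup_{s \le t}(|x(s)| + \|z(s)\|)$, an inequality of the form $W(T) \le C (|x_0| + \|z_0\|) + C' \epsilon\, W(T)$, with $C$ and $C'$ depending only on $A, p, q, \alpha, T$. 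Choosing $\epsilon_0 := \min(1, 1/(2 C'))$, this closes to $W(T) \le 2C (|x_0| + \|z_0\|)$ for every $\epsilon < \epsilon_0$.

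Feeding this a priori bound back into the Duhamel identity for $z$ immediately yields~\eqref{eq:z-est-lem-bl}. Substituting the resulting representation of $z$ into the Duhamel identity for $x$, exchanging the order of integration, and bounding the inner kernel integral again by $O(\epsilon)$, yields~\eqref{eq:x-est-lem}. Finally, for $t \ge t^{\rm BL}_\epsilon = (2\epsilon/\lambda_-)\ln(1/\epsilon)$, the matrix-exponential bound with $\mu = \lambda_-/2$ gives $\|e^{-At/\epsilon} z_0\| \le C_\mu\, e^{-\ln(1/\epsilon)}\, \|z_0\| = C_\mu\,\epsilon\,\|z_0\|$, which, together with~\eqref{eq:z-est-lem-bl}, proves~\eqref{eq:z-est-lem}.

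The delicate step is the Gronwall closure: because the right-hand side of the $z$-equation contains $z$ itself, only the smallness $O(\epsilon)$ of the integrated fast-dissipation kernel prevents the bootstrap from losing a factor diverging as $\epsilon \to 0$. The non-symmetry of $A$ also forces one to work with an exponential rate $\mu$ strictly smaller than $\lambda_-$, so that polynomial factors coming from possible Jordan blocks are absorbed into $C_\mu$; this is why $\epsilon_0$ has to be taken sufficiently small in the statement of the lemma.
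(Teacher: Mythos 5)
Your proof is correct, and it reaches the three estimates by a genuinely different organization than the paper's, even though both rely on the same toolkit: the coordinates $z = y - A^{-1}q\,x$, Duhamel's formula, Jordan-form decay of the fast semigroup, and absorption of $O(\epsilon)$ terms for $\epsilon$ small. The paper proves~\eqref{eq:x-est-lem} \emph{first}, by rewriting the $z$-dynamics with the perturbed generator $M^\epsilon = A + \epsilon\,(A^{-1}q)p^T$ so that the forcing depends on $x$ only; this requires a spectral-perturbation remark (the spectrum of $M^\epsilon$ and of $M^\epsilon + \epsilon\lambda\Id$ stays to the right of $\lambda_-/2$ for small $\epsilon$), the inverse bound $\|(M^\epsilon/\epsilon+\lambda\Id)^{-1}\| \le C\epsilon$, and a Fubini exchange with explicit integration of matrix exponentials; only afterwards does it establish~\eqref{eq:z-est-lem-bl}, using the $A$-generated Duhamel formula and the already-proven bound on $\sup|x|$. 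You instead keep $A$ as the generator throughout, treat the coupling $(A^{-1}q)p^T z$ as part of the forcing, and close a joint a priori bound on $W(T)=\sup\left(|x|+\|z\|\right)$ first; then~\eqref{eq:z-est-lem-bl} is immediate, \eqref{eq:x-est-lem} follows by substituting the representation of $z$ back into the $x$-equation, and~\eqref{eq:z-est-lem} follows from the boundary-layer computation exactly as in the paper. Your route is more elementary — no spectral perturbation of $M^\epsilon$ and no matrix inverses, with all smallness coming from $\int_0^t \|e^{-A(t-s)/\epsilon}\|\,ds = O(\epsilon)$ — while the paper's $M^\epsilon$ trick buys a forcing term free of $z$, which makes its Fubini computation exact. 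One step you should spell out: the closure $W(T) \le C\left(|x_0|+\|z_0\|\right) + C'\epsilon\, W(T)$ does \emph{not} follow by simply adding your two Duhamel bounds, because the $x$-bound contains $\sup\|z\|$ with an $O(1)$ (not $O(\epsilon)$) constant $T e^{|\lambda|T}\|p\|$; you must first insert the $z$-bound $\sup\|z\| \le C\|z_0\| + C\epsilon\,W(T)$ into the $x$-bound and only then sum. With that two-step substitution your claimed inequality is valid and the rest of the argument goes through.
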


We call the manifold $\Sigma$ the \emph{slow manifold}.
Note that the bound~\eqref{eq:z-est-lem} is sharp in the sense that,
after the initial time boundary layer, $z(t)$ is of order $\epsilon$ and
not smaller. This can be checked for example on the analytically
solvable system $\dot{x} = -x$, $\dot{y} = (x-y)/\epsilon$.

An important consequence of the above lemma is that the microscopic
solution $u(t)=(x(t),y(t))$ remains bounded, independently of
$\epsilon$, on the time range $[0,T]$.
The following result, which will be used repeatedly in the sequel,
follows immediately from Lemma~\ref{lem:slaving}:

\smallskip

\begin{cor}
\label{cor:slaving}
Consider the linear system~\eqref{eq:lin_micro} over the time range
$[0,T]$, with initial condition $x(0)=x_0$, $y(0)=y_0$. Under
Assumption~\eqref{ass:expo-stable}, there exist
$\epsilon_0 \in (0,1)$ and $C>0$, that both only
depend on $A$, $q$, $p$, $\alpha$ and $T$, such that, for all
$\epsilon < \epsilon_0$, we have 
\begin{align}
\sup_{t \in [0,T]} |x(t)| \le C \left(|x_0| + \epsilon \|y_0\|\right), 
\label{eq:x-est-cor} 
\\
\sup_{t \in [t^{\rm BL}_\epsilon,T]} \|y(t)\| \le 
C \left(|x_0| + \epsilon \|y_0\|\right),
\label{eq:y-est-cor}
\end{align}
where the size $t^{\rm BL}_\epsilon$ of the boundary layer is defined
by~\eqref{eq:def_t_BL}. 
\end{cor}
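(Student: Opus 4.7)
The plan is to derive the corollary directly from Lemma~\ref{lem:slaving} using triangle inequalities, combined with the elementary identity $z_0 = y_0 - A^{-1} q\, x_0$ that allows us to convert bounds involving $\|z_0\|$ into bounds involving $\|y_0\|$.

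First I would handle the bound on $x(t)$. Starting from~\eqref{eq:x-est-lem}, I write
\[
|x(t)| \le |x_0| \exp(|\lambda| T) + C \epsilon \left( |x_0| + \|z_0\| \right)
\]
for all $t \in [0,T]$. Since $z_0 = y_0 - A^{-1} q\, x_0$, we have $\|z_0\| \le \|y_0\| + \|A^{-1} q\|\, |x_0|$, so that $\epsilon \|z_0\| \le \epsilon \|y_0\| + \|A^{-1} q\|\, |x_0|$ (using $\epsilon < \epsilon_0 < 1$). Substituting gives~\eqref{eq:x-est-cor} with a constant $C$ depending only on $A$, $q$, $p$, $\alpha$ and $T$.

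Next I would treat $y(t)$ on $[t^{\rm BL}_\epsilon, T]$. By definition $y(t) = z(t) + A^{-1} q\, x(t)$, so
\[
\|y(t)\| \le \|z(t)\| + \|A^{-1} q\|\, |x(t)|.
\]
Plugging in the bound~\eqref{eq:z-est-lem} on $\|z(t)\|$ valid for $t \ge t^{\rm BL}_\epsilon$, together with the bound on $|x(t)|$ already established, and converting $\|z_0\|$ into $|x_0|$ and $\|y_0\|$ exactly as in the previous step, yields~\eqref{eq:y-est-cor}.

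There is essentially no obstacle here: the corollary is a bookkeeping consequence of Lemma~\ref{lem:slaving}. The only point to be careful about is tracking the dependence of the constants on $A$, $q$, $p$, $\alpha$, $T$ (but not on $\epsilon$), and verifying that the conversion $\|z_0\| \leftrightarrow \|y_0\|$ does not introduce an undesirable factor of $1/\epsilon$, which it does not since $z_0 - y_0 = -A^{-1} q\, x_0$ is independent of $\epsilon$.
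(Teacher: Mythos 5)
Your proof is correct and follows essentially the same route as the paper: deduce \eqref{eq:x-est-cor} from \eqref{eq:x-est-lem} together with $\|z_0\| \le \|y_0\| + C|x_0|$, then deduce \eqref{eq:y-est-cor} from \eqref{eq:z-est-lem}, the decomposition $y(t) = z(t) + A^{-1}q\,x(t)$, and the already-established bound on $|x(t)|$. Your extra care in noting that the conversion between $\|z_0\|$ and $\|y_0\|$ costs no factor of $1/\epsilon$ is exactly the point that makes the bookkeeping work.
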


\smallskip

The proofs of these standard results are postponed until
Appendix~\ref{sec:appendix}. In view of~\eqref{eq:x-est-lem}, we see
that, in the limit when $\epsilon$ goes to zero, the macroscopic
dynamics~\eqref{eq:lin_macro} is exact. The aim of the algorithms we
investigate below is to use these macroscopic dynamics to speed up the
computation of the solution of the original model~\eqref{eq:lin_micro},
for a fixed small but non-zero value of $\epsilon$.

\section{Micro-macro parareal algorithms\label{sec:parareal}}

In this section, we describe two micro-macro parareal
algorithms. As 
will become clear from the analysis in the forthcoming sections, the 
first one based on a lifting operator is inaccurate, whereas the
second one based on a matching operator is
extremely accurate. Both are 
generalizations of the parareal 
algorithm proposed in~\cite{Lions2001}. Our formulation follows most
closely the description in~\cite{baffico2002parallel}. We first
introduce the necessary notation in Section~\ref{sec:parareal-notation},
and we subsequently outline both algorithms in
Section~\ref{sec:parareal-alg}. For the sake of comparison, we also
discuss in Section~\ref{sec:compare_scheme} the scheme proposed
in~\cite{BBK,maday41parareal}.
Let us emphasize that the two algorithms we introduce are not restricted
to the linear system~\eqref{eq:lin_micro}, and apply to any system
of the form 
$$
\dot{x} = f(x,y), 
\quad
\dot{y} = \dfrac{1}{\epsilon}g(x,y),
$$
where $x \in \RR^s$ is a slow component ($s \in \NN^\star$), $y \in
\RR^m$ is a fast component ($m \in \NN^\star$), and where the associated
macroscopic dynamics (obtained in 
the limit of infinite time scale separation between the slow and the
fast components, namely in the limit when $\epsilon$ goes to zero) reads
$\dot{X} = F(X)$.

\subsection{Notation\label{sec:parareal-notation}}


We introduce a time discretization $(t_n)_{n=0}^{N}$, with $t_n=n\Delta
t$ and $N\Delta t=T$. Let $u^n=(x^n,y^n) \approx u(t_n)=(x(t_n),y(t_n))$
be the numerical approximation of the solution of the
microscopic model~\eqref{eq:lin_micro}, and let $X^n \approx
X(t_n)$ be that of the solution to the macroscopic
model~\eqref{eq:lin_macro}. 

\medskip

\paragraph{Fine-scale and coarse propagators}

The micro-macro parareal algorithm makes use of two propagators. First,
we need a \emph{fine-scale propagator}, that advances the microscopic
model~\eqref{eq:lin_micro} over a time-range $\Delta t$:  
\begin{equation}\label{eq:fine-propagator}
u^{n+1}=\F_{\Delta t}(u^n).
\end{equation}
To perform this, we may consider that we have at hand the {\em
  exact} propagator of the equation~\eqref{eq:lin_micro}, in which case
$\F_{\Delta t} \equiv \Phi_{\Delta t}$, where $\Phi_{\Delta t}$ is
defined by~\eqref{eq:lin_micro_ex}. Alternatively, we may resort to a
numerical integration of the dynamics~\eqref{eq:lin_micro} (using
for example forward or backward Euler discretizations) over the time range
$\Delta t$, using several steps of size $\delta t$. Typically, in the
context of a system like~\eqref{eq:lin_micro}, one would need $\delta
t$ to be of the order of $\epsilon$ to obtain accurate results. 

Second, we need a \emph{coarse propagator} for the macroscopic
model~\eqref{eq:lin_macro}, 
\begin{equation}\label{eq:coarse-propagator}
X^{n+1}=\C_{\Delta t}(X^n),
\end{equation}
where again we may assume that we can exactly
integrate~\eqref{eq:lin_macro} and hence choose $\C_{\Delta t} \equiv
\rho_{\Delta t}$, see~\eqref{eq:lin_macro_ex}. Alternatively, one may
resort to a numerical integration of the dynamics~\eqref{eq:lin_macro},
for which we can use a time-step independent of $\epsilon$ to obtain
accurate results. 

\medskip

\paragraph{Restriction, lifting and matching operators}

The parareal algorithm iteratively uses the fine-scale and the coarse
propagators. In this work, these two
propagators correspond to {\em different descriptions} of the system,
either microscopic (using $u \in \RR^d$) or macroscopic (using $X \in
\RR$). We thus need a way to go from one description to the other, as we
discuss now.  

We first introduce the \emph{restriction} operator
\begin{equation*}
\R:
\left\{
\begin{array}{rcl}
 \RR^d & \to & \RR 
\\
u=(x,y) &\mapsto& x,
\end{array}
\right.
\end{equation*} 
which maps a microscopic state to the corresponding macroscopic
state. For notational convenience, we also introduce the complement of
the restriction operator, 
\begin{equation*}
\R^\perp:
\left\{
\begin{array}{rcl}
 \RR^d & \to & \RR^{d-1}
\\
u=(x,y) &\mapsto& y,
\end{array}
\right.
\end{equation*} 
such that we can write $u=(x,y)=(\R u, \R^\perp u)$.

Conversely, we will also need to reconstruct a microscopic state from a
given macroscopic state. In contrast to the restriction operator, there
is no unique obvious way to define this operator. We
introduce two such operators, a \emph{lifting} operator and a
\emph{matching} operator. 

\medskip

\begin{defn}
A \emph{lifting operator} $\L$ is an operator
\begin{equation*}
\L:
\left\{
\begin{array}{rcl}
 \RR&\to&\RR^d
\\
X &\mapsto& u = \L(X)
\end{array}
\right.
\end{equation*}
that creates a microscopic state that is uniquely
determined by a given macroscopic state and satisfies the consistency property
\begin{equation}
\label{eq:consistance_R}
\R \circ \L=\Id. 
\end{equation}
\end{defn}

A possible choice is to take $\L(X)$ such that 
\begin{equation}
\label{eq:R_Sigma}
\R (\L(X))=X \quad \text{and} \quad \L(X) \in \Sigma,
\end{equation}
where $\Sigma$ is the slow manifold associated to the multiscale
problem.

In connection with the system~\eqref{eq:lin_micro}, an example (and this
is the choice we make in this work) is to choose 
\begin{equation}
\label{eq:choix_L}
\L(X) = (X, (A^{-1}q) X).
\end{equation}
This choice indeed satisfies~\eqref{eq:consistance_R}
and~\eqref{eq:R_Sigma}, where the slow manifold $\Sigma$ of the
system~\eqref{eq:lin_micro} is defined by~\eqref{eq:def_slow_manifold}. 

\smallskip

\begin{rem}
Other lifting operators can be introduced, using for example the
constrained runs algorithm~\cite{Gear2005}. As soon as the lifting
operator $\L$ is 
specified, $u$ is uniquely determined by $X$: the lifting operator
enforces a closure approximation on the microscopic state. 
\end{rem}

\smallskip

Alternatively, one may reconstruct a microscopic state using a 
\emph{matching operator}. 

\smallskip

\begin{defn}
A \emph{matching operator} is an operator
\begin{equation*}
\P: 
\left\{
\begin{array}{rcl}
\RR \times \RR^d &\to& \RR^d
\\
(X,v) &\mapsto& \P_X(v),
\end{array}
\right.
\end{equation*}
that satisfies
\begin{equation}
\label{eq:consistance}
X = \left(\R \circ \P\right)(X,v) \text{ for any $v \in \RR^d$ and $X
  \in \RR$},
\end{equation}
and
$$
\forall u \in \RR^d \text{ such that } \R(u)=X, \quad \P_X(u) = u,
$$
or, equivalently,
\begin{equation}
\label{eq:P_fundamental}
\forall u \in \RR^d, \quad \P(\R(u),u) = u.
\end{equation}
In contrast with a lifting operator, a matching operator requires a
microscopic state as an input, and not only a macroscopic state. 
\end{defn}

\medskip

The consistency property~\eqref{eq:consistance} may be seen as the
equivalent for $\P$ of the property~\eqref{eq:consistance_R} for $\L$.
We also note that, in view of~\eqref{eq:P_fundamental}, a
microscopic state $u$ 
which is already consistent with the macroscopic value $X$ is unaltered
by the operator $\P_X$. Combining~\eqref{eq:consistance}
and~\eqref{eq:P_fundamental}, we observe that $\P_X \circ \P_X = \P_X$:
the operator $\P_X: \RR^d \to \RR^d$ is thus a projection operator onto
microscopic states $u \in \RR^d$ that satisfy $\R(u)=X$. 
One may thus think of $\P_X$ as a projection operator that projects a
microscopic state $v$ to a microscopic state $u = \P_X(v)$, such that
$\R(u)=X$ and $u$ is as close to $v$ as possible, in a sense to be made
precise for the problem at hand. 
 
In the following, we require in addition the following continuity
property on $\P$: there exists $C>0$ such that, for all $X \in
  \RR$, $Y \in \RR$, $u \in \RR^d$ and $v \in \RR^d$,
\begin{equation}
\label{eq:P_b}
\left\| \P(X,u) - \P(Y,v) \right\| \leq C \Big[
\left\| u - v \right\| + \left| X-Y \right| \Big].
\end{equation}

For the {\em analysis} of the algorithms described below, we only require $\P$
to satisfy~\eqref{eq:consistance},~\eqref{eq:P_fundamental}
and~\eqref{eq:P_b}, and do not make any additional assumptions (see
Section~\ref{sec:analysis}). For the {\em numerical experiments}
reported on in Section~\ref{sec:num}, we choose, in the context of the
system~\eqref{eq:lin_micro}, 
\begin{equation}\label{eq:projection-good}
\P_X(v) := (X,\R^\perp v),
\end{equation}
which consists in keeping the fast variables from $v$, while imposing
the slow variable to be equal to $X$. This choice fulfills all the
above conditions~\eqref{eq:consistance},~\eqref{eq:P_fundamental}
and~\eqref{eq:P_b}.

\smallskip

\begin{rem}
The term \emph{matching operator} has been chosen in reminiscence of the
term ``moment matching'' that is commonly used in the Monte Carlo
community, see e.g.~\cite{caflisch1998monte}.
\end{rem}

\subsection{Algorithms 1 and 2\label{sec:parareal-alg}}

The parareal algorithm iteratively constructs approximations on the
whole time domain $[0,T]$. We denote by $u_k^n$, resp.~$X_k^n$, the approximate
microscopic, resp.~macroscopic, solution at time $t_n$, obtained at
the $k$-th parareal iteration.  

The first algorithm we consider is the following.

\smallskip

\begin{alg}
\label{algorithm-lifting}
Let $u(0)=u_0$ be the initial condition. 
\begin{enumerate}
\item Initialization: 
\begin{enumerate}[a)]
\item Compute $\left\{ X_0^n \right\}_{0 \leq n \leq N}$
  sequentially by using the coarse propagator:
$$
X^0_0 = \R(u_0),\qquad X_0^{n+1}=\C_{\Delta t}(X_0^n).
$$
\item Lift the macroscopic approximation to the microscopic level:
$$
u_0^0 = u_0 
\quad \text{and, for all $1 \leq n \leq N$,} \quad
u_0^n = \L(X_0^n).
$$
\end{enumerate}
\item Assume that, for some $k \geq 0$, the sequences $\left\{ u^{n}_{k}
  \right\}_{0 \leq n \leq N}$ and $\left\{ X^{n}_{k} \right\}_{0 \leq n
    \leq N}$ are known. Compute these sequences at the iteration $k+1$
  by the following steps:
\begin{enumerate}[a)]
\item For all $0 \leq n \leq N-1$, compute (in parallel) using the
  coarse and the fine-scale propagators
\begin{equation}
\overline{X}_k^{n+1}=\C_{\Delta t}(X_k^n), 
\quad
\overline{u}_k^{n+1}=\F_{\Delta t}(u_k^n).
\label{eq:micro_prop}
\end{equation}
\item For all $0 \leq n \leq N-1$, evaluate the jumps (the difference
  between the two propagated values) \emph{at the macroscopic level}: 
\begin{equation}\label{eq:parareal-jumps}
J_k^{n+1}= \R(\overline{u}^{n+1}_k)-\overline{X}^{n+1}_k.
\end{equation}
\item Compute $\left\{ X^{n}_{k+1} \right\}_{0 \leq n \leq N}$
  sequentially by
\begin{equation}\label{eq:parareal-propagate}
X^0_{k+1} = \R(u_0), \qquad X_{k+1}^{n+1} = \C_{\Delta t}(X^n_{k+1})+J^{n+1}_k.
\end{equation}
\item Compute $\left\{ u^{n+1}_{k+1} \right\}_{0 \leq n \leq N-1}$ by
  lifting the macroscopic solution:
\begin{equation}
\label{eq:parareal-lifting}
u_{k+1}^0 = u_0 
\quad \text{and, for all $0 \leq n \leq N-1$,} \quad
u_{k+1}^{n+1}=\L(X_{k+1}^{n+1}).
\end{equation}
\end{enumerate}
\end{enumerate}
\end{alg}

We can recast the above algorithm as
\begin{equation}
\label{eq:para_scheme_lift}
u^{n+1}_{k+1}
=
\L\Big(
\C_{\Delta t}\left(\R\left(u^n_{k+1}\right)\right)
+
\R\left(\F_{\Delta t}\left(u_k^n\right)\right)
-
\C_{\Delta t}\left(\R\left(u^n_k\right)\right)
\Big).
\end{equation}
Notice that this cannot be recast in the form of the original parareal
algorithm~\eqref{eq:scheme-parareal}. The above algorithm uses the
following paradigm: each time we need to 
reconstruct a full microscopic solution $u$ from a given macroscopic
state $X$, we use the lifting operator $\L$.
For example, for the system~\eqref{eq:lin_micro} and $\L$ given
by~\eqref{eq:choix_L}, this amounts to creating a microscopic state
exactly on the slow manifold~\eqref{eq:def_slow_manifold}. 

We will see in the sequel that this algorithm leads to disappointing
results. In particular, Algorithm~\ref{algorithm-lifting} does not
retain one of the properties of the parareal algorithm as originally
proposed in~\cite{Lions2001}, namely that the numerical trajectory is
exact on the first $k$ subintervals in time after $k$ iterations of
the parareal algorithm. 

\medskip

A much better algorithm is the following:

\smallskip

\begin{alg}
\label{algorithm-matching}
Let $u(0)=u_0$ be the initial condition. 
\begin{enumerate}
\item Initialization: 
proceed as in Step 1 of Algorithm~\ref{algorithm-lifting}.
\item Assume that, for some $k \geq 0$, the sequences $\left\{ u^{n}_{k}
  \right\}_{0 \leq n \leq N}$ and $\left\{ X^{n}_{k} \right\}_{0 \leq n
    \leq N}$ are known. To compute these sequences at the iteration
  $k+1$,
\begin{itemize}
\item Proceed as in Steps 2a, 2b and 2c of
  Algorithm~\ref{algorithm-lifting}.
\item Compute $\left\{ u^{n+1}_{k+1} \right\}_{0 \leq n \leq N-1}$ by
  matching the result of the local microscopic computation,
  $\overline{u}_k^{n+1}$, on the corrected macroscopic state $X_{k+1}^{n+1}$:
\begin{equation}
\label{eq:parareal-projection}
u_{k+1}^0 = u_0 
\quad \text{and, for all $0 \leq n \leq N-1$,} \quad
u_{k+1}^{n+1}=\P(X_{k+1}^{n+1},\overline{u}_k^{n+1}).
\end{equation}
\end{itemize}
\end{enumerate}
\end{alg}
The only difference between Algorithms~\ref{algorithm-lifting}
and~\ref{algorithm-matching} is how we reconstruct the microscopic
solution $u_{k+1}^{n+1}$. In Algorithm~\ref{algorithm-lifting}, we
simply choose $u_{k+1}^{n+1}$ on the slow manifold defined by $X_{k+1}^{n+1}$ 
(see~\eqref{eq:parareal-lifting}). In
Algorithm~\ref{algorithm-matching}, we use the quantity
$\overline{u}_k^{n+1}$, which is the end point of a microscopic trajectory
between times $n \Delta t$ and $(n+1) \Delta t$, and match this state
onto the corrected macroscopic state $X_{k+1}^{n+1}$, obtained at the
latest parareal iteration. 

At the initial iteration $k=0$, since no microscopic computation has been
done, we cannot use the matching operator $\P$ to reconstruct a fine-scale
solution. We thus resort to the lifting operator $\L$.

Algorithm~\ref{algorithm-matching} can be recast as
\begin{equation}
\label{eq:para_scheme_match}
u^{n+1}_{k+1}
=
\P\Big(
\C_{\Delta t}\left(\R\left(u^n_{k+1}\right)\right)
+\R\left(\F_{\Delta t}\left(u_k^n\right)\right)
-\C_{\Delta t}\left(\R\left(u^n_k\right)\right),\F_{\Delta t}(u^n_k)\Big),
\end{equation}
which is to be compared with the original parareal
algorithm~\eqref{eq:scheme-parareal} and~\eqref{eq:para_scheme_lift} for
Algorithm~\ref{algorithm-lifting}. For the linear
system~\eqref{eq:lin_micro} and the choice~\eqref{eq:projection-good} of
matching operator, the
equation~\eqref{eq:para_scheme_match} can be further simplified to
\begin{equation}
\label{eq:para_scheme_match_expl}
u^{n+1}_{k+1}
=
\F_{\Delta t}(u^n_k)
+(1,0)^T \Big(
\C_{\Delta t}\left(\R\left(u^n_{k+1}\right)\right)
-
\C_{\Delta t}\left(\R\left(u^n_k\right)\right)\Big).
\end{equation}
This is exactly~\eqref{eq:scheme-parareal} with $\F_{\Delta t}$ as the
fine propagator and $(1,0)^T \C_{\Delta t} \R$ as the coarse propagator.

Note that, in view of~\eqref{eq:consistance_R}
and~\eqref{eq:parareal-lifting} (respectively~\eqref{eq:consistance}
and~\eqref{eq:parareal-projection}), the trajectories computed using
Algorithm~\ref{algorithm-lifting} (respectively
Algorithm~\ref{algorithm-matching}) satisfy
\begin{equation}
\label{eq:consis}
\forall k \geq 0, \quad \forall n \geq 0, \quad
X_k^n = \R(u_k^n).
\end{equation} 
At any parareal iteration $k$, the macroscopic trajectory is
{\em consistent} with the microscopic trajectory. 

\subsection{Comparison of Algorithms~\ref{algorithm-lifting} and~\ref{algorithm-matching} with that of~\cite{BBK,maday41parareal}\label{sec:compare_scheme}} 

As underlined in the introduction, a micro-macro version of the parareal
algorithm has already been proposed in~\cite{maday41parareal,BBK}. In
these works, the coarse propagator is an integrator of a reduced (DAE)
model that contains {\em all} degrees of freedom in the system (both the
fast and slow ones), in contrast to our algorithms, where the coarse
propagator is an integrator for the effective dynamics of the slow
degrees of freedom.

For the model problem~\eqref{eq:lin_micro}, the reduced DAE considered
in~\cite{maday41parareal,BBK} takes the form 
\begin{equation}
\label{eq:dae}
\dot{x}= \alpha x + p^Ty, \qquad Ay=qx.	
\end{equation} 
The coarse propagator of~\cite{maday41parareal,BBK} is an integrator
$\G_{\Delta t}$ of~\eqref{eq:dae}. This coarse integrator is combined
with a fine-scale integrator $\F_{\Delta t}$ of~\eqref{eq:lin_micro} in
the parareal fashion, following~\eqref{eq:scheme-parareal}.

The obtained scheme, that we denote here Algorithm~3, differs from our
Algorithm~\ref{algorithm-matching} 
in its treatment of the fast degrees of freedom. To show this, we note
that, specifically for the model problem~\eqref{eq:lin_micro}, an exact
propagator for~\eqref{eq:dae} can be obtained by first
solving~\eqref{eq:lin_macro} exactly, and second solving the algebraic
equation for $y$. Hence, we have 
\begin{equation}
\label{eq:coarse_maday}
\G_{\Delta t}(u) 
= 
\L\circ\rho_{\Delta t}\circ \R \; u
=
\L\circ\C_{\Delta t}\circ \R \; u
\end{equation}
where, we recall, $\C_{\Delta t}$ is the coarse propagator used in
Algorithms~\ref{algorithm-lifting} and~\ref{algorithm-matching}.
Using~\eqref{eq:scheme-parareal}, we write Algorithm~3 as follows:
\begin{align}
\label{eq:para_scheme_maday}
u^{n+1}_{k+1} &= 
\F_{\Delta t}(u^n_k)
+ \G_{\Delta t}\left(u^n_{k+1}\right) - 
\G_{\Delta t}\left(u^n_k\right)
\nonumber
\\
& =
\F_{\Delta t}(u^n_k)
+\L\Big(
\C_{\Delta t}\left(\R\left(u^n_{k+1}\right)\right) - 
\C_{\Delta t}\left(\R\left(u^n_k\right)\right)\Big),
\end{align}
which can be compared with~\eqref{eq:para_scheme_match_expl} and
with~\eqref{eq:para_scheme_lift}. Notice in particular that
Algorithm~\ref{algorithm-matching} differs from Algorithm 3 in the
choice of the coarse propagator.

\medskip

The three algorithms only differ in how the microscopic and macroscopic
levels of description are coupled in the parareal iterations. These
differences, however, have implications on (i) the computational
complexity of the methods; (ii) the way they generalize to more complex
multiscale systems; and (iii) the convergence behavior. The convergence
properties of the three algorithms are analyzed in
Section~\ref{sec:analysis}. We here briefly comment on the other two
aspects. 
First, the computational complexity of the coarse propagator in
Algorithm~3 is significantly higher than that of
Algorithms~\ref{algorithm-lifting} and~\ref{algorithm-matching}, due to
the presence of the fast degrees of freedom, which requires solving a
large linear system in addition to the time-stepping of the slow degrees
of freedom. 

Second, in more complex situations, for instance when the microscopic
and macroscopic systems are nonlinear, Algorithm~3 may require the use
of a time integrator for DAEs. Although many such integrators exist,
they are usually implicit, and less convenient than ODE solvers. In
those cases, Algorithms~\ref{algorithm-lifting}
and~\ref{algorithm-matching} only require a reasonable model to
propagate the macroscopic variables. 
In both cases, one may resort to computational multiscale
methods that approximate the evolution of the approximate macroscopic
model by using short microscopic simulations. The coarse
propagator required in Algorithms~\ref{algorithm-lifting}
and~\ref{algorithm-matching} can be replaced by a coarse projective
integration
approach~\cite{KevrGearHymKevrRunTheo03,Kevrekidis:2009p7484}. 
The coarse propagator for the DAE system required in Algorithm~3 can be
replaced by a projective integration
method~\cite{gear2006towards,gear2003projective}.  
Remark that the
computational cost of both methods is not identical: projective
integration requires a computational cost of $O(\log(1/\epsilon))$,
whereas the computational cost of coarse projective integration is
independent of $\epsilon$. This shows again that Algorithms~\ref{algorithm-lifting}
and~\ref{algorithm-matching} are cheaper to implement than Algorithm~3.
 We will see in the next Section to what extent the higher computational cost of Algorithm~3 allows for a better accuracy. 
%

\section{Analysis\label{sec:analysis}}

In this section, we analyze the convergence of the two 
micro-macro parareal algorithms introduced above on the linear model
problem~\eqref{eq:lin_micro}. We also give a detailed analysis for
Algorithm 3, introduced in~\cite{BBK,maday41parareal}.
To keep the analysis simple, we focus on 
the error due to the fact that the models are different at the macroscopic and
microscopic levels. We thus
track the dependency of the error bounds on the parameter $\epsilon$,
and consider, at both levels, the exact
propagators~\eqref{eq:lin_micro_ex} and~\eqref{eq:lin_macro_ex}.
Thus, the fine-scale and coarse propagators
in~\eqref{eq:fine-propagator} and~\eqref{eq:coarse-propagator} are given
by 
$$
\F_{\Delta t}(u)=\Phi_{\Delta t}u, \qquad \C_{\Delta t}(X)=\rho_{\Delta t} X,
$$
for a fixed $\Delta t$, which is chosen typically much larger than
$\epsilon$ (so that $\Delta t$ is a macroscopic time-scale). We recall
that the lifting operator $\L$ is defined by~\eqref{eq:choix_L}, and
that we work with a matching operator $\P$
satisfying~\eqref{eq:consistance},~\eqref{eq:P_fundamental}
and~\eqref{eq:P_b}. 

We first derive an error recursion formula
in Section~\ref{sec:recursion}. Using this formula, we derive a
sharp error bound on the trajectories computed by
Algorithm~\ref{algorithm-lifting}, where the microscopic state is
reconstructed using the lifting operator $\L$ (see
Section~\ref{sec:conv-lifting}). We next turn to
Algorithm~\ref{algorithm-matching}, where the microscopic state is
reconstructed using a matching operator~$\P$. We first show
that, at a given parareal iteration $k$, the computed trajectories (both
at the macro and the micro scales) are exact up to the time $k \Delta
t$ (see Section~\ref{sec:exact}), reproducing thereby a 
property of the standard parareal algorithm. 
We subsequently derive a sharp error
bound in terms of $\epsilon$,
showing that, at iteration $k$, Algorithm~\ref{algorithm-matching}
converges to the exact solution of the full microscopic system with an
error of the order of $\epsilon^{k/2}$ (see 
Section~\ref{sec:conv-proj} for precise statements). These two
properties (exactness of the trajectories up to time $k \Delta t$ after
$k$ iterations, and improvement of the convergence rate to the exact
solution as $k$ increases) are
not satisfied for Algorithm~\ref{algorithm-lifting}. 
We eventually consider Algorithm~3. Being based
on~\eqref{eq:scheme-parareal}, this algorithm automatically satisfies
the local exactness property. We then prove a sharp error
bound in terms of $\epsilon$, showing, in agreement
with~\cite{maday41parareal}, that, at iteration $k$, Algorithm~3
converges to the exact solution of the full microscopic system with an 
error of the order of $\epsilon^k$ (see Section~\ref{sec:anal3} for
precise statements). 

The analysis below closely follows that of~\cite{Lions2001}, but is
significantly extended. We explicitly relate to the case considered
in~\cite{Lions2001} when appropriate. 

Before proceeding, we introduce two notions of error:

\smallskip

\begin{defn}[Microscopic error] 
\label{def:micro-error}
Let $u(t_n)$ be the exact microscopic solution of~\eqref{eq:lin_micro_short} at time $t_n=n\Delta t$,
and let $u^n_k$ be the parareal microscopic solution after $k$ parareal
iterations, using Algorithm~\ref{algorithm-lifting} or~\ref{algorithm-matching}. The microscopic error 
\begin{equation}
\label{eq:def_e_micro}
e^n_k = u^n_k-u(t_n)
\end{equation}
is defined as the difference of the
solutions at the microscopic level.
\end{defn}  

\smallskip

\begin{defn}[Macroscopic error] 
\label{def:macro-error}
Let $u(t_n)$ be the exact microscopic solution of~\eqref{eq:lin_micro_short}
at time $t_n=n\Delta t$, 
and let $X^n_k$ be the parareal macroscopic solution after $k$ parareal
iterations, using Algorithm~\ref{algorithm-lifting} or~\ref{algorithm-matching}. The macroscopic error
\begin{equation}
\label{eq:def_e_macro}
E^n_k = X^n_k-\R u(t_n)
\end{equation}
 is defined as the difference of the
solutions at the macroscopic level.
\end{defn}

\smallskip

Note that, in view of~\eqref{eq:consis} and using the linearity of $\R$,
we have
\begin{equation}
\label{eq:consis_e}
E^n_k = \R e^n_k.
\end{equation}
 
\subsection{Error recursion formula\label{sec:recursion}}

A first step in the analysis of the algorithms described above is the
derivation of a recursion formula for the error, which is valid for both algorithms and for any choice of the operators $\R$, $\L$ and $\P$. Starting
from~\eqref{eq:parareal-propagate} and~\eqref{eq:parareal-jumps}, we
write $X^n_{k+1}$ as a function of the microscopic and macroscopic
solutions at the parareal iteration $k$: for $n \ge 2$,
\begin{align}
X^{n}_{k+1} &= \C_{\Delta t}(X^{n-1}_{k+1})+J^n_k
\nonumber
\\
&= \rho_{\Delta t} X^{n-1}_{k+1} + \left(\R\Phi_{\Delta t}
  u_k^{n-1} - \rho_{\Delta t} X_k^{n-1}\right)
\nonumber
\\
&= \R\Phi_{\Delta t} u_k^{n-1} + \rho_{\Delta t} 
\left( X^{n-1}_{k+1} - X_k^{n-1}\right)
\nonumber
\\
&= \R\Phi_{\Delta t} u_k^{n-1} + \rho_{\Delta t} \left( \R\Phi_{\Delta
    t} u_k^{n-2} + \rho_{\Delta t} \left( X^{n-2}_{k+1} -
    X_k^{n-2}\right) - X_k^{n-1}\right)
\nonumber
\\
&= \R\Phi_{\Delta t} u_k^{n-1} + \rho_{\Delta t} \left( \R\Phi_{\Delta
    t} u_k^{n-2} - X_k^{n-1}\right) + \rho_{\Delta t}^2 \left(
  X^{n-2}_{k+1} - X_k^{n-2}\right) 
\nonumber
\\
&= \R\Phi_{\Delta t} u_k^{n-1} + \sum_{p=1}^{n-1} \rho_{\Delta t}^{p}
\left( \R\Phi_{\Delta t} u_k^{n-p-1} - X_k^{n-p} \right) 
\nonumber
\\
&= \R\Phi_{\Delta t} u_k^{n-1} + \sum_{p=1}^{n-1} \rho_{\Delta t}^{n-p}
\left( \R\Phi_{\Delta t} u_k^{p-1} - X_k^{p} \right).
\label{eq:start-exact}
\end{align}
This formula is also valid for $n=1$ using the convention $\sum_{p=1}^0
\cdot = 0$. Note that we have used the linearity of the coarse
propagator. We then obtain a recursion for the macroscopic error,
using the linearity of the fine-scale propagator:
\begin{align}
E^n_{k+1} &= X^n_{k+1}-\R\Phi_{\Delta t}^n u_0
\nonumber
\\
& 
=\R\Phi_{\Delta t} u_k^{n-1} - \R\Phi_{\Delta t}^n u_0 +
\sum_{p=1}^{n-1} \rho_{\Delta t}^{n-p} \left( \R\Phi_{\Delta t}
  u_k^{p-1} - X_k^{p} \right)
\nonumber
\\
&
=\R\Phi_{\Delta t} e_k^{n-1} + \sum_{p=1}^{n-1} \rho_{\Delta t}^{n-p}
\left( \R\Phi_{\Delta t} u_k^{p-1} -\R\Phi_{\Delta t}^p u_0 +
  \R\Phi_{\Delta t}^p u_0- X_k^{p} \right)
\nonumber
\\
&
=\R\Phi_{\Delta t} e_k^{n-1} + \sum_{p=1}^{n-1} \rho_{\Delta t}^{n-p}
\left( \R\Phi_{\Delta t} e_k^{p-1}-E_k^{p} \right)
\nonumber 
\\
&=\R\Phi_{\Delta t} e_k^{n-1} + \sum_{p=0}^{n-2} \rho_{\Delta t}^{n-p-1}
\R\Phi_{\Delta t} e_k^{p} - \sum_{p=1}^{n-1} \rho_{\Delta t}^{n-p}
E_k^{p}
\nonumber 
\\
&=\sum_{p=1}^{n-1} \rho_{\Delta t}^{n-p-1} \left( \R\Phi_{\Delta t}
  e_k^{p}-\rho_{\Delta t}E_k^{p}\right),
\label{eq:error-recursion}
\end{align}
where, in the last line, we have used the fact that the microscopic error
at $t=0$ vanishes: $e_k^{0} = 0$ for any $k$.
 
We remark that the formula \eqref{eq:error-recursion} is not closed, in
the sense that it couples the macroscopic error at parareal iteration
$k+1$ \emph{to both the macroscopic and the microscopic errors} at
parareal iteration $k$. To close the formula, and to transform it into
specific bounds on the errors, we will need to make use of specific
properties of $\Phi_{\Delta t}$ and of the lifting, matching and
restriction operators $\L$, $\P$ and $\R$. This is where the analysis of
Algorithms~\ref{algorithm-lifting} and~\ref{algorithm-matching} differ.

\smallskip

\begin{rem}
\label{rem:std_ananu}
Using~\eqref{eq:error-recursion}, it is possible to recover standard
error bounds on the parareal algorithm, when the microscopic and the
macroscopic models are linear and written at the same level of
description, using a common state variable $u\in\RR$, as
in~\cite{Lions2001} for example. In this case, we 
have $\R=\L=\P=\Id$ (there is only one model, and one level of
description), and $e_k^n = E_k^n$. The coarse and fine-scale propagators are
linear operators, denoted respectively by $\C_{\Delta t}(u) = \G_{\Delta t}(u) =
\rho^G_{\Delta t} u$ and $\F_{\Delta t}(u)=\rho^F_{\Delta t} u$. Since
$u$ is scalar, the propagators are simply multiplications by two scalars
$\rho^G_{\Delta t}$ and $\rho^F_{\Delta t}$. The
equation~\eqref{eq:error-recursion} then reads
\begin{equation}
E^n_{k+1} = \sum_{p=1}^{n-1} (\rho_{\Delta t}^G)^{n-p-1} \left( \rho_{\Delta
    t}^F -\rho_{\Delta t}^G \right) E_k^{p}.
\label{eq:error-recursion-maday}
\end{equation}
Assume, as
in the classical analysis of the parareal algorithm presented
in~\cite{Lions2001}, that the fine-scale propagator is exact, whereas the
coarse propagator is a scheme of order~$s$:
$\left| \rho_{\Delta t}^F - \rho_{\Delta t}^G \right| = O(\Delta
t^{s+1})$. We consider a range of $\Delta t$ such that $\rho_{\Delta t}^G
> 0$ (which is possible since $\rho^G_{\Delta t} = 1 + O(\Delta
t)$). Fix a time range $[0,T]$. We show that,
using~\eqref{eq:error-recursion-maday}, one
can recover the classical result of~\cite{Lions2001}: at any
parareal iteration $k$, there exists $c_k$ such that, for any $\Delta
t$, 
\begin{equation}
\label{eq:recursion1}
\sup_{0 \leq n \Delta t \leq T}
\left| E_k^n \right| \le c_k \Delta t^{s(k+1)}.
\end{equation} 
This bound is satisfied at $k=0$ since the coarse propagator is of
order $s$. Assume now that~\eqref{eq:recursion1} holds at some parareal
iteration $k$. We then deduce from~\eqref{eq:error-recursion-maday}
that, for all $n \ge 0$ such that $n \Delta t \le T$,
\begin{equation}
\label{eq:tata1}
\left|E^n_{k+1}\right| 
\le 
c_k \Delta t^{s(k+1)} \left| \rho_{\Delta t}^F - \rho_{\Delta t}^G
\right| 
\sum_{p=1}^{n-1} (\rho_{\Delta t}^G)^{n-p-1} 
\leq
C c_{k} \Delta t^{s(k+1)} \Delta t^{s+1}
\sum_{p=0}^{N-1} (\rho_{\Delta t}^G)^p. 
\end{equation}
Remark now that
$$
\sum_{p=0}^{N-1} (\rho_{\Delta t}^G)^p
=
\dfrac{(\rho_{\Delta t}^G)^N - 1}{\rho_{\Delta t}^G - 1}
=
\dfrac{(\rho_{\Delta t}^G)^N - (\rho^F_{\Delta t})^N}{\rho_{\Delta t}^G - 1}
+
\dfrac{(\rho^F_{\Delta t})^N - 1}{\rho_{\Delta t}^G - 1}.
$$
Since the fine-scale propagator is exact, we have 
$(\rho^F_{\Delta t})^N = \rho^F_{N\Delta t} = \rho^F_{T}$,
which is independent of $\Delta t$. Thus
\begin{equation}
\label{eq:tata2}
\left| \sum_{p=0}^{N-1} (\rho_{\Delta t}^G)^p \right|
\leq 
\dfrac{C \Delta t^s + C}{\left| \rho^G_{\Delta t} - 1 \right|}
\leq 
\dfrac{C}{\Delta t}.
\end{equation}
Collecting~\eqref{eq:tata1} and~\eqref{eq:tata2}, we
deduce~\eqref{eq:recursion1} at the parareal iteration $k+1$. This
concludes the proof.
\end{rem}

\subsection{Error bounds for Algorithm~\ref{algorithm-lifting}\label{sec:conv-lifting}}

We consider Algorithm~\ref{algorithm-lifting}, where the
reconstruction at each parareal iteration is done using the lifting
operator $\L$ defined by~\eqref{eq:choix_L}. We show in this section
that the accuracy of the numerical trajectory does not improve 
(neither at the microscale nor at the macroscale) as the
number of parareal iterations $k$ goes to infinity. 

\smallskip

\begin{thm} 
\label{thm:lifting}
Consider Algorithm~\ref{algorithm-lifting}, where $\F_{\Delta t}$ is the
exact propagator of the microscopic problem~\eqref{eq:lin_micro},
$\C_{\Delta t}$ is the exact propagator of the associated macroscopic
problem~\eqref{eq:lin_macro}, and $\L$ is the lifting operator defined
by~\eqref{eq:choix_L}. We fix the time range $[0,T]$, and recall that
the size of the boundary layer $t^{\rm BL}_\epsilon$
in~\eqref{eq:lin_micro} is defined by~\eqref{eq:def_t_BL}.

Then, there exists $\epsilon_0 \in (0,1)$, that only depends on $A$, $q$, $p$,
$\alpha$ 
and $T$, such that, for all $\epsilon<\epsilon_0$ and all $\Delta t >
t^{\rm BL}_\epsilon$, there exists $C$, that depends
on $A$, $q$, $p$, $\alpha$, $\Delta t$ and $T$ such that
\begin{gather}
\label{eq:thm-lifting-macro}
\sup_{0 \leq n \leq N} \left|E^n_0\right| \le C \epsilon
\quad \text{and, for all $k \geq 1$,} \quad
\sup_{0 \leq n \leq N} \left|E^n_k\right| \le C \epsilon^2,
\\  
\label{eq:thm-lifting-micro}
\text{for all $k \geq 0$,} \quad
\sup_{0 \leq n \leq N} \left\| e^n_k \right\| \le C \epsilon,
\end{gather}
where $N=T/\Delta t$ and 
where the macroscopic (resp. microscopic) error $E^n_k$ (resp. $e^n_k$)
is defined by~\eqref{eq:def_e_macro}
(resp.~\eqref{eq:def_e_micro}). Note that $C$ is independent from
$\epsilon$ and $k$.
\end{thm}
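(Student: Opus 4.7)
The plan is to combine the error recursion~\eqref{eq:error-recursion} with the slaving estimates of Lemma~\ref{lem:slaving}, exploiting the crucial feature of Algorithm~\ref{algorithm-lifting} that the reconstructed states $u_k^n=\L(X_k^n)$ (for $n\geq 1$) lie exactly on the slow manifold $\Sigma$. For the base case $k=0$, the coarse propagator yields $X_0^n = e^{\lambda n\Delta t}x_0$, and~\eqref{eq:x-est-lem} immediately gives $|E_0^n|\leq C\eps$, which is the first inequality in~\eqref{eq:thm-lifting-macro}.

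For the inductive step on the macroscopic error, the key observation is the decomposition
\[
e_k^p \;=\; \L(E_k^p) + (0,-z(t_p)),
\]
which follows from the linearity of $\L$ together with the identity $u(t_p)=\L(x(t_p))+(0,z(t_p))$ coming from the definition $z(t)=y(t)-A^{-1}q\,x(t)$. Using the linearity of $\Phi_{\Delta t}$, the generic term in the recursion~\eqref{eq:error-recursion} splits as
\[
\R\Phi_{\Delta t}\, e_k^p - \rho_{\Delta t}\, E_k^p
\;=\; \bigl[\R\Phi_{\Delta t}\L(E_k^p)-\rho_{\Delta t} E_k^p\bigr] \;-\; \R\Phi_{\Delta t}(0,z(t_p)).
\]
The bracket is the macroscopic modeling error starting from a point on $\Sigma$, hence~\eqref{eq:x-est-lem} applied with $z_0=0$ bounds it by $C\eps|E_k^p|$. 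For the second term, linearity of $\Phi_{\Delta t}$ allows applying~\eqref{eq:x-est-lem} with $x_0=0$, giving $|\R\Phi_{\Delta t}(0,z)|\leq C\eps\|z\|$; combined with~\eqref{eq:z-est-lem} (which applies because the assumption $\Delta t > t^{\rm BL}_\eps$ ensures $t_p\geq\Delta t > t^{\rm BL}_\eps$ for $p\geq 1$), this yields a bound of size $C\eps^2$.

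Plugging these two estimates into~\eqref{eq:error-recursion} and controlling the geometric factor $\sum_{p=1}^{n-1}\rho_{\Delta t}^{n-p-1}$ on $[0,T]$ by a constant depending on $\Delta t$ and $T$, one obtains the one-step contraction
\[
\max_{0\leq n\leq N}|E_{k+1}^n|\;\leq\; C\eps\,\max_{0\leq n\leq N}|E_k^n| + C\eps^2.
\]
For $\eps<\eps_0$ small enough that $C\eps<1/2$, a direct induction starting from $|E_0^n|\leq C\eps$ then yields $\max_n|E_k^n|\leq C\eps^2$ for every $k\geq 1$, completing~\eqref{eq:thm-lifting-macro}. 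The microscopic bound~\eqref{eq:thm-lifting-micro} follows at once from the same decomposition $e_k^n=\L(E_k^n)+(0,-z(t_n))$, since $\|e_k^n\|\leq C|E_k^n|+\|z(t_n)\|\leq C\eps^2+C\eps = O(\eps)$ by~\eqref{eq:z-est-lem}.

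The main conceptual obstacle, and simultaneously the reason no better microscopic estimate is possible, is that the $O(\eps)$ bound on $e_k^n$ is genuinely \emph{sharp}: the lifted state $u_k^n$ lies exactly on $\Sigma$, whereas the true state $u(t_n)$ is $O(\eps)$ off $\Sigma$ outside the boundary layer. No number of iterations of Algorithm~\ref{algorithm-lifting} can correct this systematic bias, and this is precisely what motivates replacing $\L$ by the matching operator $\P$ in Algorithm~\ref{algorithm-matching}.
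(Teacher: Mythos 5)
Your proposal is correct and follows essentially the same route as the paper's proof: the decomposition $e_k^p=\L E_k^p-(0,z(t_p))$ is exactly the paper's identity $e_k^p = \L E_k^p - (\Id-\L\R)\Phi_{\Delta t}^p u_0$, the two terms are bounded the same way via~\eqref{eq:x-est-lem} (with $z_0=0$, resp.\ $x_0=0$) and~\eqref{eq:z-est-lem}, and the resulting one-step recursion $\widetilde E_{k+1}\le C\eps(\widetilde E_k+\eps)$ is the paper's as well. The only cosmetic difference is that you close the induction with a contraction argument ($C\eps<1/2$) where the paper solves the auxiliary recursion $v_{k+1}=Cm\eps(v_k+\eps)$ explicitly; both are equivalent and yield the same $\eps_0$ and constants.
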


\smallskip

The numerical experiments described in Section~\ref{sec:num} show that
these error estimates are sharp. Recall that 
$t^{\rm BL}_\epsilon = C \epsilon \ln(1/\epsilon)$ for some constant $C$
only depending on the matrix $A$ of~\eqref{eq:lin_micro}
(see~\eqref{eq:def_t_BL}). The assumption $\Delta t > 
t^{\rm BL}_\epsilon = C \epsilon \ln(1/\epsilon)$ is therefore
automatically satisfied for sufficiently small $\epsilon$, and in
particular when the time-step $\Delta t$ is of the order of the
macroscopic time scale.

\smallskip

\begin{proof}
Using the definitions~\eqref{eq:def_e_micro} and~\eqref{eq:def_e_macro}
of the microscopic and the macroscopic errors, and the fact that the
microscopic state is reconstructed via the lifting operator~$\L$,  
\[
u_k^n=\L X_k^n,
\]
we have 
$$
\L\, E^n_k 
= 
\L\, X_k^n - \L\R\,\Phi_{\Delta t}^n u_0
= 
u_k^n - \L\R\,\Phi_{\Delta t}^n u_0
=
e_k^n + \Phi_{\Delta t}^n u_0 - \L\R\,\Phi_{\Delta t}^n u_0,
$$
or, equivalently,
\begin{equation}
\label{eq:utile}
e_k^n = \L\, E^n_k - \left(\Id - \L\R\right)\,\Phi_{\Delta t}^n u_0.
\end{equation}
As a consequence, we can write the recursion~\eqref{eq:error-recursion}
for $E_{k+1}^n$ in terms of $E_k^p$ only, by eliminating the microscopic
errors $e_k^p$. We have
\begin{align}
\R\Phi_{\Delta t} e_k^{p}-\rho_{\Delta t}E_k^{p} 
& = \R\Phi_{\Delta t}\left[ \L\, E^p_k - \left(\Id -
    \L\R\right)\,\Phi_{\Delta t}^p u_0\right]-\rho_{\Delta t}E_k^{p}
\nonumber\\
&= \left(\R\Phi_{\Delta t} \L -\rho_{\Delta t}\right)\, E^p_k -
\R\Phi_{\Delta t}\left(\Id - \L\R\right)\,\Phi_{\Delta t}^p u_0.
\label{eq:term}
\end{align}
The first term in~\eqref{eq:term} stems from the difference between the
macroscopic
evolution of the microscopic system and the evolution of the approximate
macroscopic equation. The second term stems from the difference in
evolution between a microscopic state and the (unique) microscopic state
that is obtained by lifting its restriction.
To bound the first term in \eqref{eq:term}, we observe that 
\begin{equation}\label{eq:cts-bound}
\left|\R\Phi_{\Delta t} \L -\rho_{\Delta t}\right|\le C \epsilon.
\end{equation}
Consider indeed the system~\eqref{eq:lin_micro} with the initial
condition $(x_0,y_0) = \L(x_0)$. Then $z_0 = 0$ (because $\L(x_0) \in
\Sigma$, see~\eqref{eq:choix_L}), and we deduce
from~\eqref{eq:x-est-lem} that
$$
|x(\Delta t)-x_0 \exp(\lambda \Delta t)| \le C\epsilon |x_0|,
$$
that reads
$$
\left|\R \Phi_{\Delta t} \L(x_0) - \rho_{\Delta t} x_0 \right| \le 
C \epsilon |x_0|,
$$
from which we infer~\eqref{eq:cts-bound}.

We now turn to the second term of equation~\eqref{eq:term}. We
introduce the shorthand notation for the exact solution
$$
\Phi_{\Delta t}^pu_0 
= \tilde{u}^p = 
\left(\R \tilde{u}^p, \R^\perp \tilde{u}^p\right)=
\left(\tilde{x}^p,\tilde{y}^p\right).
$$
First, using the definition~\eqref{eq:choix_L} for $\L$, we write 
$$
\left(\Id-\L\R\right)\Phi_{\Delta t}^pu_0
= \begin{bmatrix}0\\
\tilde{y}^p - (A^{-1}q)\,\tilde{x}^p
\end{bmatrix}.
$$
Second, using~\eqref{eq:x-est-cor} with the initial condition
$\overline{x}_0=0$, $\overline{y}_0=\tilde{y}^p -
(A^{-1}q)\,\tilde{x}^p$, we get  
\begin{equation}
\left| \R\Phi_{\Delta t}\left(\Id-\L\R\right) \Phi_{\Delta t}^pu_0 \right|
\le C \epsilon \, \left\| \tilde{y}^p- (A^{-1}q) \,\tilde{x}^p \right\|.
\label{eq:term2_pre}
\end{equation}
We are now left with bounding $\left\|
  \tilde{y}^p-(A^{-1}q)\,\tilde{x}^p\right\|$. We note that
$\tilde{y}^p-(A^{-1}q)\,\tilde{x}^p
=z(p \Delta t)$, thus, using~\eqref{eq:z-est-lem} for the solution $u(p
\Delta t) = \Phi_{\Delta t}^pu_0$, we deduce that
$$
\left\| \tilde{y}^p-(A^{-1}q)\,\tilde{x}^p \right\|
=
\left\| z(p \Delta t) \right\|
\leq
C \eps \left( |x_0| + \| z_0 \| \right)
\leq
C \eps. 
$$
Note that we have used the fact that $p \geq 1$ and $\Delta t \geq
t^{\rm BL}_\eps$, hence $p \Delta t \geq t^{\rm BL}_\eps$. We then deduce
from~\eqref{eq:term2_pre} that 
\begin{equation}
\left| \R\Phi_{\Delta t}\left(\Id-\L\R\right) \Phi_{\Delta t}^pu_0 \right|
\le C \epsilon^2.
\label{eq:term2}
\end{equation}
Collecting~\eqref{eq:term},~\eqref{eq:cts-bound} and~\eqref{eq:term2},
we obtain
$$
\left|\R\Phi_{\Delta t} e_k^{p}-\rho_{\Delta t}E_k^{p} \right|
\le C\epsilon\left( \left|E_k^p\right| + \epsilon\right), 
$$
where $C$ only depends on $A$, $q$, $p$, $\alpha$, $T$ and $\Delta t$.

Inserting this bound into the error
recursion~\eqref{eq:error-recursion}, and using that $\rho_{\Delta t} >
0$ (see~\eqref{eq:lin_macro_ex}), we get
$$
\left| E^n_{k+1} \right| \le C \epsilon \sum_{p=1}^{n-1}
\rho^{n-p-1}_{\Delta t}\left(\left|E_k^p\right|+\epsilon\right).
$$
We now introduce $\widetilde{E}_k := \max_{0 \leq n  \leq T/\Delta t}
\left| E_k^n\right|$, and write
$$
\left| E^n_{k+1} \right| 
\le 
C \epsilon 
\left(\widetilde{E}_k +\epsilon\right)
\sum_{p=1}^{n-1} \rho^{n-p-1}_{\Delta t}
=
C \epsilon 
\left(\widetilde{E}_k +\epsilon\right)
\dfrac{1-\rho_{\Delta t}^{n-1}}{1-\rho_{\Delta t}}.
$$
Let $\dps m := \max_{0 \leq n \leq N} 
\dfrac{1-\rho_{\Delta t}^n}{1-\rho_{\Delta t}}$, which only depends on
$\Delta t$, $T$ and $\lambda$. We obtain
$$
\widetilde{E}_{k+1} 
\le 
C m \epsilon \left(\widetilde{E}_k +\epsilon\right),
$$
where $Cm$ only depends on $A$, $q$, $p$, $\alpha$, $T$ and $\Delta
t$ (and is in particular independent of $k$ and $\epsilon$). We thus have
$$
0 \leq \widetilde{E}_k \leq v_k
$$
where the sequence $\left\{ v_k \right\}_{k \in \NN}$ is recursively
defined by $v_{k+1} = C m \epsilon \left(v_k +\epsilon\right)$ and 
$v_0 = \widetilde{E}_0$, so that
$$
v_k=\widetilde{E}_0 (Cm\epsilon)^k + Cm \, \epsilon^2 \, 
\frac{1-(Cm\epsilon)^k}{1-Cm\epsilon}.
$$
Note that the bound~\eqref{eq:x-est-lem} reads
$| x(t) - X(t) | \leq C \epsilon$, hence $v_0 = \widetilde{E}_0 \leq C
\epsilon$. 

Let us choose $\epsilon_0 = 1/(Cm)$. Notice that $\epsilon_0$ only
depends on 
$A$, $q$, $p$, $\alpha$, $T$ and $\Delta t$. For any
$\epsilon \in (0, \epsilon_0)$, the sequence $v_k$ has a limit as $k$ goes to infinity and there exists $C$, independent of $k$ and $\epsilon$, such that
$$
0 \leq \widetilde{E}_0 \leq C \epsilon 
\quad \text{and} \quad
\forall k \geq 1, \ \
0 \leq \widetilde{E}_k \leq v_k \leq C \epsilon^2.
$$
This proves the bound~\eqref{eq:thm-lifting-macro} on the macroscopic error.

To prove the error bound on the microscopic error, we notice, using the
definition~\eqref{eq:choix_L} of $\L$, that 
$$
\left(\Id - \L\R\right)\,\Phi_{\Delta t}^n u_0
=
\left(\Id - \L\R\right)\, u(n \Delta t)
=
\Big( 0,y(n \Delta t) - (A^{-1} q) x(n \Delta t) \Big).
$$
Since $\Delta t \geq t^{\rm BL}_\epsilon$, we deduce
from~\eqref{eq:z-est-lem} that
\begin{equation}
\forall n \geq 1, \quad
\left\| \left(\Id - \L\R\right)\,\Phi_{\Delta t}^n u_0 \right\| 
\leq C \epsilon.
\label{eq:zozo}
\end{equation}
Collecting~\eqref{eq:utile},~\eqref{eq:thm-lifting-macro}
and~\eqref{eq:zozo}, we obtain, for any $k \geq 0$, 
$$
\forall n \geq 1, \quad
\left\| e_k^n \right\| 
\leq C |E^n_k| + \left\| 
\left(\Id - \L\R\right)\,\Phi_{\Delta t}^n u_0 \right\|
\leq
C \epsilon.
$$
Note that the microscopic error is always dominated by the lifting error
(the second term of~\eqref{eq:utile}).

Since, at any parareal iteration $k$, we start with the correct initial
condition, we have $e_k^0 = 0$ and we thus have
proved~\eqref{eq:thm-lifting-micro}. 
\end{proof}

\subsection{Error bounds for Algorithm~\ref{algorithm-matching}}
\label{sec:ana_matching}

We now consider Algorithm~\ref{algorithm-matching}, where the
reconstruction at each parareal iteration is done using {\em any}
matching operator $\P$ satisfying~\eqref{eq:consistance}
and~\eqref{eq:P_fundamental}. The continuity assumption~\eqref{eq:P_b}
will be added when needed. As pointed out above, we do not assume
any specific expression for $\P$ here. We show in
Section~\ref{sec:conv-proj} that, in contrast to
Algorithm~\ref{algorithm-lifting}, the convergence rate obtained with Algorithm~\ref{algorithm-matching}
increases as the number of parareal iterations $k$ increases. Before
that, we show in Section~\ref{sec:exact}  
that, at a given parareal iteration $k$, the computed trajectories
(again both at the macro and the micro scales) are exact up to the time
$k \Delta t$. 

\subsubsection{Local exactness of the algorithm\label{sec:exact}}

One of the important properties of the parareal
algorithm~\eqref{eq:scheme-parareal} is that it 
results, after $k$ parareal iterations, in a solution that is exact at all
times up to $k \Delta t$. The word ``exact'' here means that the parareal
solution is equal to the solution that would have been obtained using only, in
a sequential fashion, the fine-scale propagator up to time $k \Delta
t$. We now show that this exactness property holds for the micro-macro
parareal algorithm we propose.

\smallskip

\begin{thm} 
Consider Algorithm~\ref{algorithm-matching}, where $\F_{\Delta t}$ is the
exact propagator of the microscopic problem~\eqref{eq:lin_micro},
$\C_{\Delta t}$ is the exact propagator of the associated macroscopic
problem~\eqref{eq:lin_macro}, $\L$ is the lifting operator defined
by~\eqref{eq:choix_L} and $\P$ is a matching operator
satisfying~\eqref{eq:consistance} and~\eqref{eq:P_fundamental}. 

Denote by $u_k^n$ the microscopic solution obtained at the $n$-th
time-step and $k$-th parareal iteration, using
Algorithm~\ref{algorithm-matching}. Then, at any parareal iteration $k
\geq 1$, we have
\begin{equation}
\label{eq:thm-exactness}
\forall p \leq k, \quad u_k^p = \Phi_{\Delta t}^p u_0.
\end{equation}
\end{thm}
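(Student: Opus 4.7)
The plan is to prove \eqref{eq:thm-exactness} by a double induction, first on the parareal iteration index $k$ and then, for fixed $k$, on the time-step index $n$. The key algebraic ingredients will be the compact form~\eqref{eq:para_scheme_match} of Algorithm~\ref{algorithm-matching}, the fact that in this analysis setting $\F_{\Delta t} = \Phi_{\Delta t}$ and $\C_{\Delta t} = \rho_{\Delta t}$ are linear, and above all the defining property~\eqref{eq:P_fundamental} of the matching operator, which will allow me to collapse a term of the form $\P(\R(v),v)$ to $v$ at each step.

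For the outer base case $k=1$, I fix $n=0$ and start from~\eqref{eq:para_scheme_match} to compute $u_1^1$. Since both $u_1^0=u_0$ and $u_0^0=u_0$ by the initialization rule of Algorithm~\ref{algorithm-matching}, the two contributions $\C_{\Delta t}(\R(u_1^0))$ and $\C_{\Delta t}(\R(u_0^0))$ are identical and cancel, leaving $u_1^1 = \P(\R(\F_{\Delta t}(u_0)),\F_{\Delta t}(u_0)) = \F_{\Delta t}(u_0) = \Phi_{\Delta t} u_0$, where the second equality is exactly~\eqref{eq:P_fundamental} applied to $u=\Phi_{\Delta t}u_0$.

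For the outer inductive step, I assume $u_k^p = \Phi_{\Delta t}^p u_0$ for all $0\le p\le k$ and prove the analogous statement at level $k+1$ by an inner induction on $n$ in the range $0\le n\le k$ (which ensures that $n+1\le k+1$). The inner base case $n=0$ is handled exactly as the outer base case above, using $u_{k+1}^0 = u_k^0 = u_0$ to cancel the two coarse contributions. For the inner inductive step, assuming $u_{k+1}^n = \Phi_{\Delta t}^n u_0$ and combining this with the outer hypothesis $u_k^n = \Phi_{\Delta t}^n u_0$ (legitimate because $n\le k$), the first argument of $\P$ in~\eqref{eq:para_scheme_match} simplifies: the two $\C_{\Delta t}\circ\R$ terms coincide and cancel, and what remains is $\R(\F_{\Delta t}(\Phi_{\Delta t}^n u_0)) = \R(\Phi_{\Delta t}^{n+1} u_0)$, while the second argument of $\P$ equals $\Phi_{\Delta t}^{n+1} u_0$. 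One more application of~\eqref{eq:P_fundamental} yields $u_{k+1}^{n+1} = \Phi_{\Delta t}^{n+1} u_0$, closing the induction.

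There is no substantive obstacle in this argument; the only point to watch is the range restriction $n\le k$ needed to invoke the outer hypothesis on $u_k^n$ when advancing to time-step $n+1$ at iteration $k+1$. This is precisely the mechanism by which the exactness window grows by exactly one subinterval per parareal iteration, mirroring the well-known property of the standard parareal scheme~\eqref{eq:scheme-parareal}. Note that this proof relies only on the identity~\eqref{eq:P_fundamental} and uses neither the consistency~\eqref{eq:consistance} in any essential way beyond it, nor the continuity assumption~\eqref{eq:P_b} on $\P$.
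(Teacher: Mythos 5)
Your proof is correct, but it follows a genuinely different route from the paper's. The paper runs a single induction on $k$: it first invokes the unrolled macroscopic recursion~\eqref{eq:start-exact} (whose derivation uses the \emph{linearity} of the coarse propagator) to show that the macroscopic trajectory is exact up to time $(k+1)\Delta t$, i.e.~\eqref{eq:macro_is_exact}, and only then applies~\eqref{eq:P_fundamental} once per time step to transfer exactness to the microscopic level. You instead perform a double induction (outer on $k$, inner on $n$) directly on the compact microscopic recursion~\eqref{eq:para_scheme_match}, where the driving mechanism is the cancellation $\C_{\Delta t}(\R(u^n_{k+1}))-\C_{\Delta t}(\R(u^n_k))=0$ once the two iterates coincide. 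This buys you something real: your argument never uses linearity of $\C_{\Delta t}$ or $\F_{\Delta t}$, so it extends verbatim to nonlinear propagators, exactly as the classical exactness proof for~\eqref{eq:scheme-parareal} does; the paper's route, via~\eqref{eq:start-exact}, is tied to the linear setting (though it yields macroscopic exactness explicitly as a by-product, which the paper reuses). One small correction to your closing remark: the consistency property~\eqref{eq:consistance} \emph{is} used in an essential way, albeit implicitly, since the compact form~\eqref{eq:para_scheme_match} is obtained from the algorithm's steps precisely by substituting $X^n_k=\R(u^n_k)$ and $X^n_{k+1}=\R(u^n_{k+1})$, which is the identity~\eqref{eq:consis}, itself a consequence of~\eqref{eq:consistance}. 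If you wanted to avoid~\eqref{eq:consistance} entirely, you would have to run the induction jointly on the pairs $(X^n_k,u^n_k)$, working with the raw updates~\eqref{eq:parareal-propagate} and~\eqref{eq:parareal-projection} rather than with~\eqref{eq:para_scheme_match}.
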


\begin{proof}
The proof goes by induction. Consider the parareal iteration $k=1$. We
obviously have $u_1^0 = u_0 = \Phi_{\Delta t}^0 u_0$. At time
iteration $n=1$, in view of~\eqref{eq:parareal-projection}, we have
$$
u_1^1 = \P(X_1^1,\overline{u}_0^1),
$$
with (see~\eqref{eq:parareal-propagate})
$$
X_1^1 
= 
\C_{\Delta t}(X_1^0) + \R(\overline{u}_0^1) - \overline{X}_0^1
=
\C_{\Delta t}(X_1^0) + \R(\overline{u}_0^1) - \C_{\Delta t}(X_0^0)
=
\R(\overline{u}_0^1).
$$
Hence, using the fundamental property~\eqref{eq:P_fundamental},
$$
u_1^1 = \P(\R(\overline{u}_0^1),\overline{u}_0^1) = \overline{u}_0^1 = 
\F_{\Delta t}(u_0^0) = \Phi_{\Delta t} u_0.
$$
This proves~\eqref{eq:thm-exactness} for $k=1$. 

Assume now that, at some parareal iteration $k \geq 1$, we
have~\eqref{eq:thm-exactness}. In view of~\eqref{eq:consis}, this
implies that $X_k^p = \R\left[\Phi_{\Delta t}^p u_0\right]$ for any $p
\leq k$. Using~\eqref{eq:start-exact} and the fact that $\Phi_{\Delta t}
u_k^{p-1} = \Phi_{\Delta t}^p u_0$ for all $p \le k$, we deduce that
\begin{equation}
\label{eq:macro_is_exact}
\forall n \leq k+1, \quad
X^n_{k+1}=\R\Phi_{\Delta t} u_k^{n-1} = \R\Phi_{\Delta t}^n u_0.
\end{equation}
Hence, at the parareal iteration $k+1$, the macroscopic solution is
exact up to time $(k+1) \Delta t$. Using~\eqref{eq:parareal-projection},
we now write, for any $n \leq k$,
$$
u_{k+1}^{n+1}
=
\P(X^{n+1}_{k+1},\overline{u}_k^{n+1})
=
\P(\R\Phi_{\Delta t}^{n+1} u_0,\Phi_{\Delta t} u_k^n)
=
\P(\R\Phi_{\Delta t}^{n+1} u_0,\Phi_{\Delta t}^{n+1}u_0)
=\Phi_{\Delta t}^{n+1}u_0,
$$
where we have used~\eqref{eq:macro_is_exact} and~\eqref{eq:micro_prop}
in the first equality, the exactness assumption of the microscopic
solution at iteration $k$ in the second equality, and the fundamental
property~\eqref{eq:P_fundamental} of the matching operator $\P$ in the
last equality. This proves the relation~\eqref{eq:thm-exactness} at the
iteration $k+1$ and concludes the proof. 
\end{proof}

This result also directly follows from our above remark that, in its form
\eqref{eq:para_scheme_match_expl}, Algorithm 2 is of the form \eqref{eq:scheme-parareal}.

\subsubsection{Error bounds\label{sec:conv-proj}}

We now establish error bounds on Algorithm~\ref{algorithm-matching} that
show that the microscopic solution converges towards the exact
microscopic dynamics when the modeling error $\epsilon$ decreases, and
that the convergence rate improves as the number of parareal iterations
$k$ increases. This is in contrast with 
Algorithm~\ref{algorithm-lifting}, where the error does not improve even
if $k$ goes to infinity (see Section~\ref{sec:conv-lifting}). With
Algorithm~\ref{algorithm-matching}, we recover the behavior of the
standard parareal algorithm, as recalled in Remark~\ref{rem:std_ananu}
(see e.g.~\eqref{eq:recursion1}). 

\smallskip

\begin{thm}\label{thm:matching} 
Consider Algorithm~\ref{algorithm-matching}, where $\F_{\Delta t}$ is the
exact propagator of the microscopic problem~\eqref{eq:lin_micro},
$\C_{\Delta t}$ is the exact propagator of the associated macroscopic
problem~\eqref{eq:lin_macro}, $\L$ is the lifting operator defined
by~\eqref{eq:choix_L}, and $\P$ is a matching operator
satisfying~\eqref{eq:consistance},~\eqref{eq:P_fundamental}
and~\eqref{eq:P_b}. We fix the time range $[0,T]$, and recall that
the size of the boundary layer $t^{\rm BL}_\epsilon$
in~\eqref{eq:lin_micro} is defined by~\eqref{eq:def_t_BL}.

Then, there exists $\epsilon_0 \in (0,1)$, that only depends on $A$,
$q$, $p$, $\alpha$ and $T$, such that, for all $\epsilon<\epsilon_0$ and
all $\Delta t > t^{\rm BL}_\epsilon$, there exists a constant $C_k$,
independent of $\epsilon$, such that
\begin{gather}
\label{eq:thm-matching-macro}
\text{for all $k \geq 0$,} \quad
\sup_{0 \leq n \leq N} \left|E^n_k\right| \le C_k 
\epsilon^{1+\lceil k/2\rceil},
\\  
\label{eq:thm-matching-micro}
\text{for all $k \geq 0$,} \quad
\sup_{0 \leq n \leq N} \left\| e^n_k \right\| \le C_k 
\epsilon^{1+\lfloor k/2\rfloor},
\end{gather}
where $N = T/ \Delta t$ and 
where the macroscopic (resp. microscopic) error $E^n_k$ (resp. $e^n_k$)
is defined by~\eqref{eq:def_e_macro}
(resp.~\eqref{eq:def_e_micro}).
The constant $C_k$ is independent from $\epsilon$, but a priori depends on
$k$, $A$, $q$, $p$, $\alpha$, $\Delta t$ and $T$.
\end{thm}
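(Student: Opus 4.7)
The plan is to prove both bounds simultaneously by induction on $k$, relying on the error recursion~\eqref{eq:error-recursion}, the continuity property~\eqref{eq:P_b} of the matching operator, and Lemma~\ref{lem:slaving} and Corollary~\ref{cor:slaving} applied to $e_k^n$ viewed as an initial condition of the linear microscopic dynamics. For the base case $k=0$, Step~1 of Algorithm~\ref{algorithm-matching} coincides with that of Algorithm~\ref{algorithm-lifting}, so $X_0^n=\rho_{\Delta t}^n\R u_0$ is the exact macroscopic trajectory and $u_0^n=\L(X_0^n)$ lies on $\Sigma$ for $n\ge 1$. The bound $|E_0^n|\le C\epsilon$ is then an immediate consequence of~\eqref{eq:x-est-lem}, while $\|e_0^n\|\le C\epsilon$ follows from $\R^\perp e_0^n = A^{-1}q\,E_0^n-z(t_n)$ and~\eqref{eq:z-est-lem}, valid since $n\Delta t\ge t^{\rm BL}_\epsilon$ for $n\ge 1$ (and $e_0^0=0$).

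The key preliminary estimate I would establish before running the induction is the following: for any $e\in\RR^d$ viewed as a microscopic initial condition,
\[
\left|\R\,\Phi_{\Delta t}\,e-\rho_{\Delta t}\,\R\,e\right|\le C\epsilon\,\|e\|.
\]
This is a direct consequence of~\eqref{eq:x-est-lem}: applied to $e=(\R e,\R^\perp e)$ with $z_0=\R^\perp e-A^{-1}q\,\R e$, and noticing $\|z_0\|\le C\|e\|$, the stated bound follows. Inserted into the error recursion~\eqref{eq:error-recursion}, together with the uniform boundedness of $\sum_{p=1}^{n-1}\rho_{\Delta t}^{n-p-1}$ over $[0,T]$, it gives
\[
\sup_{0\le n\le N}|E_{k+1}^n|\le C\epsilon\sup_{0\le p\le N}\|e_k^p\|.
\]
Under the inductive hypothesis $\|e_k^p\|\le C_k\epsilon^{1+\lfloor k/2\rfloor}$, this yields $|E_{k+1}^n|\le C_{k+1}\epsilon^{2+\lfloor k/2\rfloor}=C_{k+1}\epsilon^{1+\lceil(k+1)/2\rceil}$, as one verifies separately for $k$ even and odd.

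For the microscopic bound at iteration $k+1$, I would combine $u_{k+1}^{n+1}=\P(X_{k+1}^{n+1},\Phi_{\Delta t}u_k^n)$ with the identity $u(t_{n+1})=\P(\R\,\Phi_{\Delta t}\,u(t_n),\Phi_{\Delta t}\,u(t_n))$, which follows from~\eqref{eq:P_fundamental}. The continuity assumption~\eqref{eq:P_b} then gives
\[
\|e_{k+1}^{n+1}\|\le C\,|E_{k+1}^{n+1}|+C\,\|\Phi_{\Delta t}\,e_k^n\|,
\]
while Corollary~\ref{cor:slaving} applied to $e_k^n$ yields $\|\Phi_{\Delta t}\,e_k^n\|\le C(|E_k^n|+\epsilon\|e_k^n\|)$, using once more that $\Delta t\ge t^{\rm BL}_\epsilon$. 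Combining the inductive bounds on $|E_k^n|$ and $\|e_k^n\|$ with the freshly proved estimate on $|E_{k+1}^{n+1}|$, and doing the parity bookkeeping, leads to $\|e_{k+1}^{n+1}\|\le C_{k+1}\epsilon^{1+\lfloor(k+1)/2\rfloor}$.

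The main technical obstacle will be the bookkeeping of ceilings and floors in the exponents: each parareal iteration gains a factor of $\epsilon$ in the macroscopic bound through the key estimate, whereas the microscopic bound inherits through Corollary~\ref{cor:slaving} the weaker of the two previous bounds (the $\epsilon$-independent factor multiplying $|E_k^n|$ is precisely what prevents a gain at every iteration). This asymmetric gain mechanism is exactly what produces the floor/ceiling discrepancy between~\eqref{eq:thm-matching-macro} and~\eqref{eq:thm-matching-micro}, and verifying that the induction closes cleanly requires distinguishing the parity of $k$ at each step.
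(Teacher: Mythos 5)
Your proposal is correct and follows essentially the same route as the paper's proof: the same two key estimates (the bound $|\R\Phi_{\Delta t}e-\rho_{\Delta t}\R e|\le C\epsilon\|e\|$ from~\eqref{eq:x-est-lem} fed into the recursion~\eqref{eq:error-recursion}, and the bound $\|e_{k+1}^n\|\le C(|E_{k+1}^n|+|E_k^{n-1}|+\epsilon\|e_k^{n-1}\|)$ obtained from~\eqref{eq:P_b} and Corollary~\ref{cor:slaving}), followed by the same parity-sensitive induction. Your bookkeeping, in which the macroscopic bound at iteration $k+1$ is deduced from the microscopic bound at iteration $k$ alone via $|E_k^p|\le\|e_k^p\|$, is a slight streamlining of the paper's case analysis but not a different argument.
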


\smallskip

In~\eqref{eq:thm-matching-macro} and~\eqref{eq:thm-matching-micro}, we
used the notation: for any $x \in \RR$, $\lceil x \rceil \in \Z$ and
$\lfloor x \rfloor \in \Z$ are respectively defined by: $\lfloor x
\rfloor \le x < \lfloor x \rfloor +1$ and $\lceil x \rceil - 1 < x \le
\lceil x \rceil$. 

The above result shows that the parareal iterations alternatingly
improve the macroscopic and the microscopic errors by an order of
magnitude in $\epsilon$. The numerical results of Section~\ref{sec:num}
show that~\eqref{eq:thm-matching-macro}
and~\eqref{eq:thm-matching-micro} are sharp error estimates. As already
mentioned above, the assumption $\Delta t > t^{\rm BL}_\epsilon$ is
automatically satisfied for sufficiently small 
$\epsilon$, in particular when the time-step $\Delta t$ is of the
order of the macroscopic time-scale.

The bounds~\eqref{eq:thm-matching-macro}
and~\eqref{eq:thm-matching-micro} show that, as $k$ increases, the rate
of convergence (with respect to $\epsilon$) of the error increases. The
dependence of the constant $C_k$ in these two bounds on $\Delta t$ and $k$
will be analyzed in details on the numerical test case considered in
Section~\ref{sec:num-exact} (see~\eqref{eq:obs3_macro}
and~\eqref{eq:obs3_micro}). 

\smallskip

\begin{proof}
Using~\eqref{eq:parareal-projection},~\eqref{eq:P_fundamental} and the
definition~\eqref{eq:def_e_micro} of the microscopic error, we have
$$
e_{k+1}^n
=
u^n_{k+1} - \Phi_{\Delta t}^nu_0
=
\P(X^n_{k+1},\overline{u}_{k}^n) - 
\P(\R(\Phi_{\Delta t}^n u_0),\Phi_{\Delta t}^n u_0).
$$
Hence, using~\eqref{eq:P_b}, we deduce that
\begin{align}
\left\| e_{k+1}^n \right\| &\le C \left(
\left\| \overline{u}_{k}^n - \Phi_{\Delta t}^n u_0 \right\|
+
\left| X^n_{k+1} - \R(\Phi_{\Delta t}^n u_0) \right|
\right)
\nonumber \\
& \le C \left(
\left\| \Phi_{\Delta t} u_{k}^{n-1} - \Phi_{\Delta t}^n u_0 \right\|
+
\left| E_{k+1}^n \right|
\right)
\nonumber \\
& \le C \left(
\left\| \Phi_{\Delta t} e_{k}^{n-1} \right\|
+
\left| E_{k+1}^n \right| 
\right). 
\label{eq:err-micro_pre}
\end{align}
Since $\Delta t \geq t^{\rm BL}_\epsilon$, we infer from~\eqref{eq:x-est-cor}
and~\eqref{eq:y-est-cor} that 
$$
\left\| \Phi_{\Delta t} e_{k}^{n-1} \right\| 
\leq
C \left( \left| \R e_{k}^{n-1} \right| + \epsilon \left\| \R^\perp
    e_{k}^{n-1} \right\| \right)
=
C \left( \left| E_{k}^{n-1} \right| + \epsilon \left\| \R^\perp
    e_{k}^{n-1} \right\| \right),
$$
where we have used~\eqref{eq:consis_e}. We then deduce
from~\eqref{eq:err-micro_pre} that
\begin{equation}
\left\| e_{k+1}^n \right\| 
\le 
C \left(
\left| E_{k}^{n-1} \right| + \epsilon \left\| \R^\perp
    e_{k}^{n-1} \right\| 
+
\left| E_{k+1}^n \right| 
\right)
\le 
C \left(\left| E_{k}^{n-1} \right| + 
\epsilon \left\| e_{k}^{n-1} \right\| +
\left| E_{k+1}^{n} \right| \right).
\label{eq:err-micro}
\end{equation}

We now bound the macroscopic error $E_{k+1}^n$, using the recursion
formula~\eqref{eq:error-recursion}, that reads
\begin{equation}
E^n_{k+1} 
= 
\sum_{p=1}^{n-1} \rho_{\Delta t}^{n-p-1} \, T^p_k,
\label{eq:error-recursion_bis}
\end{equation}
with $T^p_k := \R\Phi_{\Delta t} e_k^{p}-\rho_{\Delta
  t}E_k^{p}$. Consider the solution $(\widetilde{x}(t),\widetilde{y}(t))$
to the system~\eqref{eq:lin_micro} with initial condition 
$\widetilde{u}(0) = e_k^{p}$, that is $\widetilde{x}(0) = \R(e_k^{p}) =
E_k^p$ and $\widetilde{y}(0) = \R^\perp(e_k^{p})$. We then have 
$$
T^p_k = \widetilde{x}(\Delta t) - \widetilde{X}(\Delta t),
$$
where $\widetilde{X}(\Delta t)$ is the solution to~\eqref{eq:lin_macro}
with initial condition $\widetilde{X}(0) = E_k^p$. In view
of~\eqref{eq:x-est-lem}, we have
$$
\left| T^p_k \right| 
= 
\left| \widetilde{x}(\Delta t) - \widetilde{X}(\Delta t) \right|
\leq
C \eps \left( |\widetilde{x}(0)| + 
\| \widetilde{y}(0) - A^{-1} q \widetilde{x}(0) \| \right)
\leq 
C \eps 
\left(\left|E^p_k\right| + \left\|e^{p}_{k}\right\|\right),
$$
where $C$ is independent from $p$, $k$ and $\epsilon$.
We are now in position to use the
recursion~\eqref{eq:error-recursion_bis}, from which we infer
\begin{equation}
\left| E^n_{k+1} \right| 
\le 
\sum_{p=1}^{n-1}\rho^{n-p-1}_{\Delta t}\left|T^p_k\right|
\le 
C \epsilon \sum_{p=1}^{n-1} \rho^{n-p-1}_{\Delta t} 
\left(\left|E_k^p\right|+\left\|e^{p}_{k}\right\|\right). 
\label{eq:err-macro}
\end{equation}
We now prove the theorem by induction, using the two fundamental
estimates~\eqref{eq:err-micro} and~\eqref{eq:err-macro}. At the parareal
iteration $k=0$, Algorithm~\ref{algorithm-matching} is identical to
Algorithm~\ref{algorithm-lifting}. In view
of~\eqref{eq:thm-lifting-macro} and~\eqref{eq:thm-lifting-micro}, we
thus have
$$
\sup_{0 \leq n \leq N} \left| E_0^n \right| \le C_0 \epsilon
\quad \text{and} \quad 
\sup_{0 \leq n \leq N} \left\| e_0^n \right\| \le C_0 \epsilon,
$$
that is~\eqref{eq:thm-matching-macro} and~\eqref{eq:thm-matching-micro}
for $k=0$.

Let us now assume that~\eqref{eq:thm-matching-macro}
and~\eqref{eq:thm-matching-micro} hold at any iteration $k' \leq k$,
with $k$ an even integer. We 
prove the bounds at iteration $k+1$. Setting $m=k/2+1$ (so that $\lfloor
k/2 \rfloor=\lceil k/2 \rceil=m-1$, $\lfloor (k-1)/2 \rfloor=m-2$ and
$\lceil (k-1)/2 \rceil=m-1$), we thus assume that
\begin{gather*}
\sup_{0 \leq p \leq N} \left|E_{k-1}^p\right| \le C_{k-1} \epsilon^m,
\quad
\sup_{0 \leq p \leq N} \left\| e_{k-1}^p \right\| \le C_{k-1} \epsilon^{m-1},
\\
\sup_{0 \leq p \leq N} \left|E_k^p\right| \le C_k \epsilon^m,
\quad
\sup_{0 \leq p \leq N} \left\| e_k^p \right\| \le C_k \epsilon^m.
\end{gather*}
Then, we infer from~\eqref{eq:err-macro} that, for any $0 \leq n \leq N$,
$$
\left| E^n_{k+1} \right| 
\le 
C C_k \epsilon^{m+1} \sum_{p=1}^{n-1} \rho^{n-p-1}_{\Delta t} 
\le 
C C_k \epsilon^{m+1} \dfrac{1-\rho_{\Delta t}^{n-1}}{1-\rho_{\Delta t}}
\le 
\dfrac{C C_k \epsilon^{m+1}}{1-\rho_{\Delta t}} 
\le 
C_{k+1} \epsilon^{m+1},
$$
where $C_{k+1}$ is independent from $\epsilon$, but depends on $k$ and
$\Delta t$. We
next deduce from~\eqref{eq:err-micro} that, for any $0 \leq n \leq N$,
$$
\left\| e_{k+1}^n \right\| 
\le 
C \left( C_k \epsilon^m + C_k \epsilon^{m+1} + C_{k+1} \epsilon^{m+1} \right) 
\le 
\tilde{C}_{k+1} \epsilon^m,
$$
where $\tilde{C}_{k+1}$ is again independent from $\epsilon$, but
depends on $k$. We thus have proved~\eqref{eq:thm-matching-macro}
and~\eqref{eq:thm-matching-micro} at iteration $k+1$. 

We now assume that~\eqref{eq:thm-matching-macro}
and~\eqref{eq:thm-matching-micro} hold at any iteration $k' \leq k$,
with $k$ an odd integer. We 
prove the bounds at iteration $k+1$. Setting $m=(k-1)/2+1$ (so that
$\lfloor k/2 \rfloor=m-1$, $\lceil k/2 \rceil=m$ and $\lfloor (k-1)/2
\rfloor=\lceil (k-1)/2 \rceil=m-1$), we thus assume that
\begin{gather*}
\sup_{0 \leq p \leq N} \left|E_{k-1}^p \right| \le C_{k-1} \epsilon^m,
\quad 
\sup_{0 \leq p \leq N} \left\|e_{k-1}^p \right\| \le C_{k-1} \epsilon^m,
\\  
\sup_{0 \leq p \leq N} \left| E_k^p \right| \le C_k \epsilon^{m+1},
\quad 
\sup_{0 \leq p \leq N} \left| e_k^p \right|\le C_k \epsilon^m.
\end{gather*}
Using again equations~\eqref{eq:err-macro} and~\eqref{eq:err-micro}, we
find that, for any $0 \leq n \leq N$,
$$
\left| E_{k+1}^n \right| \le C_{k+1} \epsilon^{m+1}
\quad \text{and} \quad 
\left\| e_{k+1}^n \right\| \le C_{k+1} \epsilon^{m+1},
$$
where $C_{k+1}$ is again independent from $\epsilon$. We thus have
proved~\eqref{eq:thm-matching-macro} and~\eqref{eq:thm-matching-micro}
at iteration $k+1$. This concludes the proof. 
\end{proof}

\subsection{Error bounds for Algorithm~3\label{sec:anal3}}

Since Algorithm~3 uses the standard parareal
iteration~\eqref{eq:scheme-parareal}, local exactness is automatically
satisfied. We proceed to proving error bounds on Algorithm~3, which can
be compared to those of Algorithm~\ref{algorithm-matching}. 

\smallskip

\begin{thm}
\label{thm:maday} 
Consider Algorithm~3 given by~\eqref{eq:para_scheme_maday}, where
$\F_{\Delta t}$ is the exact propagator of the microscopic
problem~\eqref{eq:lin_micro}, $\G_{\Delta t}$ is the exact propagator of
the associated macroscopic DAE problem~\eqref{eq:coarse_maday}, and $\L$
is the lifting operator defined by~\eqref{eq:choix_L}. We fix the time
range $[0,T]$, and recall that the size of the boundary layer $t^{\rm
  BL}_\epsilon$ in~\eqref{eq:lin_micro} is defined by~\eqref{eq:def_t_BL}.

Then, there exists $\epsilon_0 \in (0,1)$, that only depends on $A$, $q$, $p$,
$\alpha$ and $T$, such that, for all $\epsilon<\epsilon_0$ and all
$\Delta t > t^{\rm BL}_\epsilon$, there exists a constant $C_k$,
independent of $\epsilon$, such that 
\begin{gather}
\label{eq:thm-maday-macro}
\text{for all $k \geq 0$,} \quad
\sup_{0 \leq n \leq N} \left|E^n_k\right| \le C_k 
\epsilon^{k+1},
\\  
\label{eq:thm-maday-micro}
\text{for all $k \geq 0$,} \quad
\sup_{0 \leq n \leq N} \left\| e^n_k \right\| \le C_k 
\epsilon^{k+1},
\end{gather}
where $N = T/ \Delta t$ and 
where the macroscopic (resp.~microscopic) error $E^n_k$ (resp.~$e^n_k$)
is defined by~\eqref{eq:def_e_macro}
(resp.~\eqref{eq:def_e_micro}). The constant $C_k$ is independent from
$\epsilon$, but a priori depends on $k$, $A$, $q$, $p$, $\alpha$,
$\Delta t$ and $T$.
\end{thm}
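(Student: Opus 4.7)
The strategy is to exploit the fact that, via~\eqref{eq:coarse_maday}, Algorithm~3 is the classical linear parareal scheme~\eqref{eq:scheme-parareal} with $\F_{\Delta t}=\Phi_{\Delta t}$ and with a coarse propagator $\G_{\Delta t}=\L\circ\C_{\Delta t}\circ\R$ that is linear on $\RR^d$. This means the analysis from~\cite{Lions2001} applies essentially verbatim, and the work reduces to quantifying the defect operator
\begin{equation*}
\delta := \Phi_{\Delta t} - \G_{\Delta t}
\end{equation*}
in powers of $\epsilon$.

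First, I would derive the error recursion for $e^n_k := u^n_k - \Phi_{\Delta t}^n u_0$. Subtracting $\Phi_{\Delta t}^{n+1} u_0$ from~\eqref{eq:para_scheme_maday} and using linearity of $\Phi_{\Delta t}$ and $\G_{\Delta t}$ gives
\begin{equation*}
e^{n+1}_{k+1} = \G_{\Delta t}\, e^n_{k+1} + \delta\, e^n_k, \qquad e^0_k = 0,
\end{equation*}
which iterates in $n$ to
\begin{equation*}
e^n_{k+1} = \sum_{j=1}^n \G_{\Delta t}^{n-j}\, \delta\, e^{j-1}_k.
\end{equation*}

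Second, I would establish the key bound
\begin{equation*}
\forall v\in\RR^d,\quad \|\delta v\|\le C\epsilon \|v\|, \quad \text{provided }\Delta t\ge t^{\rm BL}_\epsilon.
\end{equation*}
Viewing $v=(v_x,v_y)$ as the initial condition of~\eqref{eq:lin_micro}, the two components of $\delta v$ are $x(\Delta t)-\rho_{\Delta t} v_x$ and $y(\Delta t)-A^{-1}q\,\rho_{\Delta t} v_x$. The first is controlled by~\eqref{eq:x-est-lem}; the second decomposes as $A^{-1}q\bigl(x(\Delta t)-\rho_{\Delta t} v_x\bigr)+z(\Delta t)$, where the first piece is again controlled by~\eqref{eq:x-est-lem} and the second by~\eqref{eq:z-est-lem}, which is precisely where the assumption $\Delta t\ge t^{\rm BL}_\epsilon$ enters to kill the fast transient in $z$. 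Both components are thus of order $\epsilon\bigl(|v_x|+\|v_y-A^{-1}q\,v_x\|\bigr)=O(\epsilon\|v\|)$.

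Finally, I would conclude by induction on $k$. For the base case $k=0$, since $u^n_0=\G_{\Delta t}^n u_0$, the telescoping identity
\begin{equation*}
\G_{\Delta t}^n-\Phi_{\Delta t}^n = -\sum_{j=0}^{n-1}\G_{\Delta t}^{n-1-j}\,\delta\,\Phi_{\Delta t}^j
\end{equation*}
together with the defect bound applied to the exact trajectory (bounded uniformly in $\epsilon$ by Corollary~\ref{cor:slaving}) gives $\sup_n\|e^n_0\|\le C_0\epsilon$. For the inductive step, assuming $\sup_n\|e^n_k\|\le C_k\epsilon^{k+1}$, the explicit formula for $e^n_{k+1}$ combined with the defect bound and the uniform boundedness of $\|\G_{\Delta t}^j\|$ on $0\le j\le N=T/\Delta t$ (independent of $\epsilon$, since $\lambda$, $\L$ and $\R$ do not involve $\epsilon$) yields $\sup_n\|e^n_{k+1}\|\le C_{k+1}\epsilon^{k+2}$, establishing~\eqref{eq:thm-maday-micro}. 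The macroscopic bound~\eqref{eq:thm-maday-macro} follows at once from $|E^n_k|=|\R e^n_k|\le\|e^n_k\|$. The only nontrivial ingredient is the uniform-in-$\epsilon$ bound on $\delta$; everything else is the standard linear parareal argument, which applies here precisely because, in contrast with Algorithm~\ref{algorithm-matching}, no matching operator is interposed between the iterations and the error recursion is strictly linear, so each parareal iteration gains a full factor of $\epsilon$ at both levels of description.
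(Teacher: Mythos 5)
Your proof is correct, and it shares its two essential ingredients with the paper's: the one-step error identity $e^n_{k+1}=\G_{\Delta t}\,e^{n-1}_{k+1}+\delta\, e^{n-1}_k$ (the paper writes this as $e_{k+1}^n=\Phi_{\Delta t}e^{n-1}_k+\L\rho_{\Delta t}\R\left(e^{n-1}_{k+1}-e^{n-1}_k\right)$, which is the same identity), and the operator defect bound $\|\delta v\|\le C\eps\|v\|$ under $\Delta t\ge t^{\rm BL}_\eps$, which is precisely the paper's estimate~\eqref{eq:tutu}, proved there by the identical slow/fast decomposition through~\eqref{eq:x-est-lem} and~\eqref{eq:z-est-lem}. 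Where you genuinely differ is in how the induction is closed. The paper never unrolls the recursion in $n$: it isolates the macroscopic error, obtaining $\|e^n_{k+1}\|\le C\left(\eps\|e^{n-1}_k\|+|E^{n-1}_{k+1}|\right)$ (its~\eqref{eq:err-micro_maday-3}), and then runs a coupled micro/macro induction in the style of the proof of Theorem~\ref{thm:matching}, controlling $E^n_{k+1}$ through the macroscopic recursion~\eqref{eq:error-recursion}, which remains valid for Algorithm~3 because $\R\L=\Id$. You instead iterate the linear recursion in $n$ down to the closed formula $e^n_{k+1}=\sum_{j=1}^n\G_{\Delta t}^{n-j}\,\delta\, e^{j-1}_k$, invoke the $\eps$-uniform bound $\|\G_{\Delta t}^m\|=\|\L\rho^m_{\Delta t}\R\|\le C$ for $m\Delta t\le T$, and obtain a single-level induction purely on the microscopic error, the macroscopic bound following trivially from $|E^n_k|=|\R e^n_k|\le\|e^n_k\|$. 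Your route is the classical linear-parareal argument of~\cite{Lions2001} transplanted to the micro-macro setting, and it is arguably more self-contained, since it bypasses the macroscopic error recursion entirely. The paper's organization buys something else: a structural side-by-side with Algorithm~\ref{algorithm-matching}, exposing that the only difference between the two analyses is the $O(\eps)$ coupling term in~\eqref{eq:err-micro_maday-3} versus the $O(1)$ coupling term in~\eqref{eq:err-micro}, which is exactly what halves the per-iteration gain for Algorithm~\ref{algorithm-matching}. Your handling of the two delicate points is also sound: the defect bound legitimately applies to an arbitrary vector $v$ because the hypothesis $\Delta t\ge t^{\rm BL}_\eps$ absorbs the fast transient of $z$ in Lemma~\ref{lem:slaving}, and the base case requires $\sup_j\|\Phi^j_{\Delta t}u_0\|\le C$ uniformly in $\eps$, which Corollary~\ref{cor:slaving} indeed provides since $j\Delta t\ge t^{\rm BL}_\eps$ for all $j\ge1$.
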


\smallskip

These results are in agreement with~\cite[Theorem
2.1]{maday41parareal}.
The numerical results of Section~\ref{sec:num}
show that~\eqref{eq:thm-maday-macro}
and~\eqref{eq:thm-maday-micro} are sharp error estimates. 

\smallskip

The above result shows that, in contrast to
Algorithm~\ref{algorithm-matching}, Algorithm~3 improves the order of
convergence (in terms of $\epsilon$) of both the
macroscopic and the microscopic errors by an order of magnitude in
$\epsilon$ at each iteration. As noted in
Section~\ref{sec:compare_scheme}, this improved convergence rate comes
at the price of a larger computational cost per iteration. 

\smallskip

\begin{proof}
Using~\eqref{eq:para_scheme_maday} and the
definition~\eqref{eq:def_e_micro} of the microscopic error, we have
$$
e_{k+1}^n
=
u^n_{k+1} - \Phi_{\Delta t}^nu_0
=
\Phi_{\Delta t}e^{n-1}_k+\L \rho_{\Delta t}\R\left(e^{n-1}_{k+1}-e^{n-1}_k\right).
$$
Using~\eqref{eq:consis_e}, we deduce from the above equation that
\begin{align}
\left\| e_{k+1}^n \right\| 
& \le 
\left\| 
\Phi_{\Delta t} e_{k}^{n-1} -\L \rho_{\Delta t}\R e^{n-1}_k
\right\|
+
\left\| \L \rho_{\Delta t} E^{n-1}_{k+1} \right\|. 
\label{eq:err-micro_maday-2}
\end{align}
The first term is decomposed as 
\begin{multline*}
\left\| 
\Phi_{\Delta t} e_{k}^{n-1} -\L \rho_{\Delta t}\R e^{n-1}_k
\right\| 
\\
\le 
	C \left(
	\left| 
	\R\Phi_{\Delta t} e_{k}^{n-1} -\rho_{\Delta t}\R e^{n-1}_k
	\right|
	+
	\left\| 
	\R^\perp\Phi_{\Delta t} e_{k}^{n-1} -R^{\perp}\L\rho_{\Delta t}\R e^{n-1}_k
	\right\|
	\right).
\end{multline*}
Since $\Delta t \geq t^{\rm BL}_\epsilon$, we infer an estimate on the
first (resp.~second) term of the above right-hand side
using~\eqref{eq:x-est-lem} (resp.~\eqref{eq:z-est-lem}), resulting in
\begin{equation}
\left\| 
\Phi_{\Delta t} e_{k}^{n-1} -\L \rho_{\Delta t}\R e^{n-1}_k
\right\| 
\le 
C \eps \left\| e_k^{n-1} \right\|.
\label{eq:tutu}
\end{equation}
Collecting~\eqref{eq:err-micro_maday-2} and~\eqref{eq:tutu}, we deduce
that there exists a constant $C$, independent of $\epsilon$, such that
\begin{equation}
\left\| e_{k+1}^n \right\|
\le C  
\left(
\eps\left\| e_{k}^{n-1}\right\|+ \left|E^{n-1}_{k+1}\right| 
\right).
\label{eq:err-micro_maday-3}
\end{equation}
This estimate is to be compared with~\eqref{eq:err-micro} in the proof of
Theorem~\ref{thm:matching}. 
Using~\eqref{eq:err-micro_maday-3}, the proof of Theorem~\ref{thm:maday}
is completed via induction, in a way that is similar to (but simpler
than) the proof of Theorem~\ref{thm:matching}. 
\end{proof}

\section{Numerical experiments (linear test-case)\label{sec:num}}

In this section, we numerically illustrate the above convergence
results on a linear problem. We first consider the case when both the
microscopic and the macroscopic models are integrated exactly
(Section~\ref{sec:num-exact}). We next consider the case when the
macroscopic propagator is a forward Euler discretization, thus
introducing some finite step-size error
(Section~\ref{sec:num-macro-fe}). 

We consider the example system
\begin{equation}
\label{eq:toy-problem}
\begin{cases}
\phantom{al}\dot{x} &= \phantom{al} -\dfrac{x}{2} - \dfrac{y_1 + y_2}{4}
\\ \noalign{\vskip 5pt}
\begin{bmatrix}\dot{y}_1 \\ \dot{y}_2\end{bmatrix} 
&=\phantom{al} \dfrac{1}{\eps}
\left(\begin{bmatrix}1 \\ 1\end{bmatrix}x - \begin{bmatrix}1/2 & 1/2 \\ 0 & 1/3\end{bmatrix}\begin{bmatrix}y_1\\y_2\end{bmatrix}\right) 
\end{cases},
\end{equation}
which is of the form~\eqref{eq:lin_micro}. The associated macroscopic,
slow dynamics is given by~\eqref{eq:lin_macro} with $\lambda=-1$.
The initial condition is $x(0)=1$, $y_1(0)=y_2(0)=0$, and we consider the
solution on the interval $[0,T]$ with $T=N\Delta t=10$. 

The fine-scale propagator $\F_{\Delta t}$ is the exact integrator
of~\eqref{eq:toy-problem}. The coarse propagator $\C_{\Delta t}$ is the
exact integrator of~\eqref{eq:lin_macro} in Section~\ref{sec:num-exact},
and a forward Euler discretization of~\eqref{eq:lin_macro} in
Section~\ref{sec:num-macro-fe}.
We choose the parareal time-step $\Delta t= 10^{-1}$, and consider
$\epsilon \in [10^{-5},10^{-1}]$. The lifting operator $\L$ is defined
by~\eqref{eq:choix_L}, and we use the matching operator
$\P$ defined by~\eqref{eq:projection-good}.

We look at the relative macroscopic error $\left|E_k^N\right|/\left|
  x(T)\right|$ and the relative microscopic error
$\left\|e_k^N\right\|/\left\|u(T)\right\|$ at the final time $T = t_N =
N \Delta t$ for
different iteration numbers $k$, satisfying $0\le k \le K$.

\subsection{Results with exact microscopic and macroscopic integrations\label{sec:num-exact}}

In this section, we take both the fine-scale and the coarse propagators
to be the exact integrators.

\subsubsection{Algorithm~\ref{algorithm-lifting}}

We first consider Algorithm~\ref{algorithm-lifting} (analyzed in
Section~\ref{sec:conv-lifting}), where the reconstruction at the end of
each parareal iteration is done using the lifting operator $\L$.

We set the maximal number of parareal iterations at
$K=2$. Figure~\ref{fig:num-exact-lifting} shows the macroscopic and
microscopic errors as a function of $\epsilon$ for the chosen values of
$k$. We see that the macroscopic error is of the order of
$O(\epsilon^2)$ as soon as $k\ge 1$ (and is of the order of $O(\epsilon)$ at
$k=0$). The macroscopic error at $k=2$ is equal to that at $k=1$. We also
observe that the microscopic error is always of the order of $\epsilon$
(for any $k$), although the value of the error is smaller at $k=1$ than
at $k=0$. These results are in agreement with Theorem~\ref{thm:lifting},
and confirm the fact that the accuracy of
Algorithm~\ref{algorithm-lifting} does not improve when $k$ goes to
infinity. 

\begin{figure}[htbp]
\includegraphics[scale=0.76]{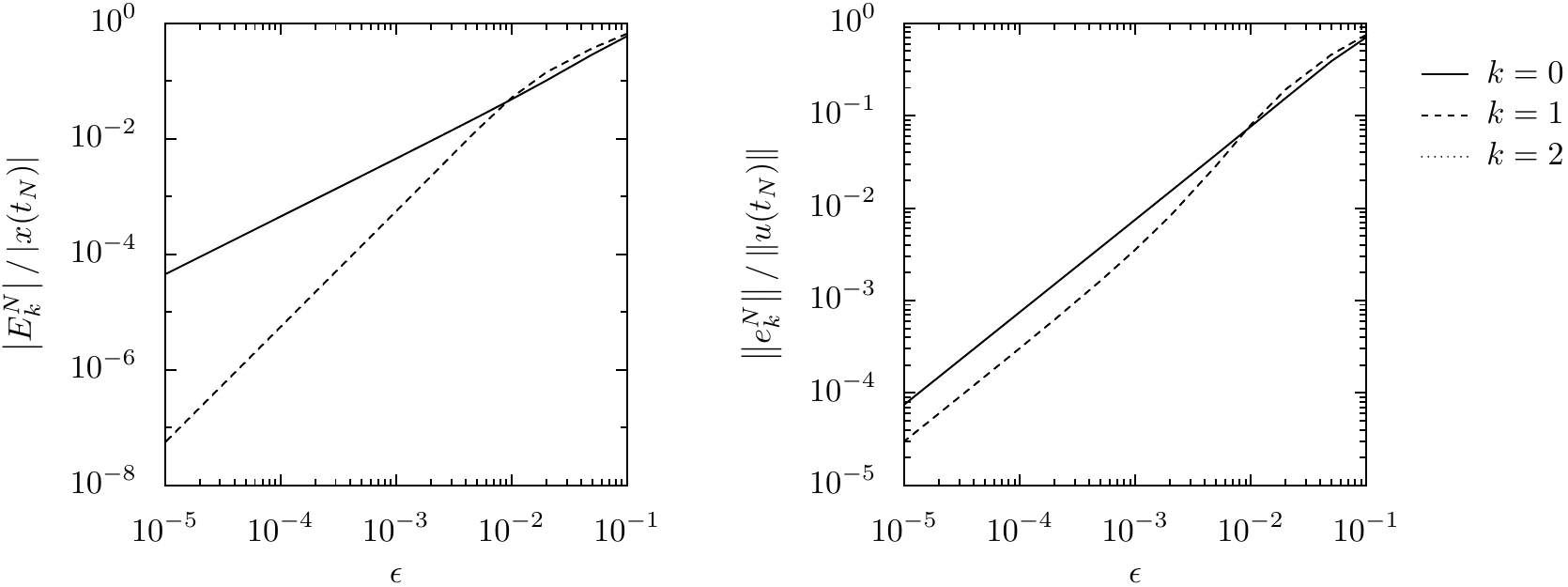}
\caption{\label{fig:num-exact-lifting} 
Algorithm~\ref{algorithm-lifting} for the system~\eqref{eq:toy-problem},
with exact fine-scale and coarse propagators: errors as a function of
$\epsilon$ for different values of $k$ (left: macroscopic error; right:
microscopic error). Note that the lines for $k \ge 1$ visually overlap.}
\end{figure}
  
When $\epsilon$ is too large, the macroscopic error is not anymore of
the order of $O(\epsilon^2)$ at the iteration $k \geq 1$. This is due to the
fact that the assumption 
$\Delta t \geq t^{\rm BL}_\epsilon$ is no longer satisfied. Recall
indeed that we keep
$\Delta t$ fixed, and $t^{\rm BL}_\epsilon = C \eps \ln(1/\eps)$
increases if $\epsilon$ increases. Hence, for too large values of
$\epsilon$, the time step $\Delta t$ is too small to correct for the
initial boundary layer.

\subsubsection{Algorithm~\ref{algorithm-matching}}
\label{sec:num_algo2}

We now consider Algorithm~\ref{algorithm-matching} (analyzed in
Section~\ref{sec:ana_matching}), where the reconstruction at the end of
each parareal iteration is performed using the matching operator $\P$.

The maximal number of parareal iterations is set at
$K=6$. Figure~\ref{fig:num-exact-matching} shows the macroscopic and
microscopic errors as a function of $\epsilon$, for the chosen values of
$k$. The numerical results are in agreement with
Theorem~\ref{thm:matching}. At each odd parareal iteration, the order of
convergence (in terms of $\epsilon$) of the macroscopic error increases
by $1$, whereas the microscopic error decreases, but remains of the same
order in $\epsilon$. At each even iteration, the converse holds: the
order of convergence (in terms of $\epsilon$) increases by $1$ for the
microscopic error, whereas the macroscopic error decreases but its order
remains alike. Note also that, for the smallest considered values of
$\epsilon$, the algorithm reaches machine precision in $5$ to $6$
iterations. 

\begin{figure}[htbp]
\includegraphics[scale=0.76]{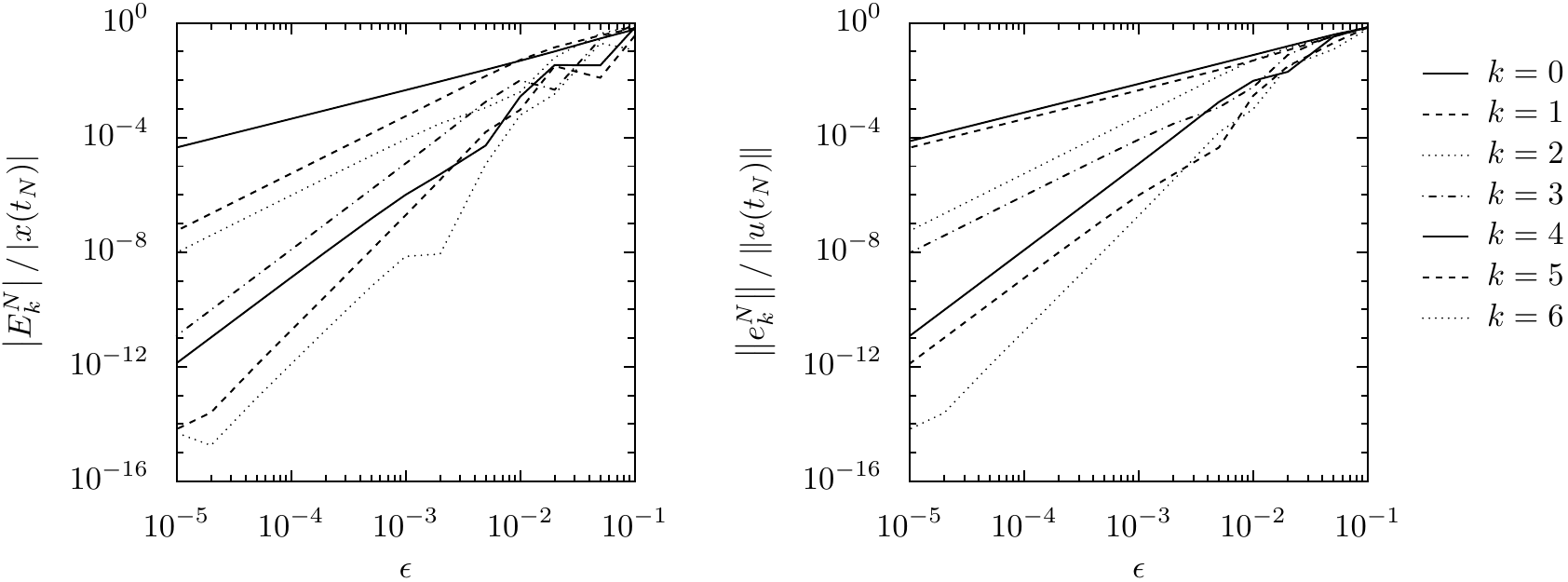}
\caption{\label{fig:num-exact-matching}
Algorithm~\ref{algorithm-matching} for the system~\eqref{eq:toy-problem},
with exact fine-scale and coarse propagators: errors as a function of
$\epsilon$ for different values of $k$ (left: macroscopic error; right:
microscopic error).}
\end{figure}

As with Algorithm~\ref{algorithm-lifting}, when $\epsilon$ is too large, the
numerical results do not agree with the theoretical results, because the
chosen time-step $\Delta t$ does not satisfy the assumption $\Delta t
\geq t^{\rm BL}_\epsilon$. 

\medskip

At this point, we have numerically verified our theoretical results, and
know that the macroscopic error is bounded from above by, and actually
roughly of the order of,
\begin{equation}
\label{eq:obs1} 
\sup_n | E_k^n | \approx 
C_{k,\Delta t} \left( \frac{\eps}{\Delta t} \right)^{1+\lceil
  k/2\rceil},
\end{equation} 
where $C_{k,\Delta t}$ a priori depends on $k$ and $\Delta t$, but is
independent of $\epsilon$ (and likewise for the microscopic error). 

On Figure~\ref{fig:num-exact-matching-k}, we plot the macroscopic and
microscopic errors as a function of the iteration number $k$, $0 \le k
\le K=30$, for various values of $\epsilon$. We observe an exponential
convergence to the exact solution as a function of $k$, with a
convergence rate that increases when $\epsilon$ decreases. We deduce
from~\eqref{eq:obs1} how the constant $C_{k,\Delta t}$ depends on $k$:
there exists $C^0_{\Delta t}$ and $C^1_{\Delta t}$, independent of $k$
and $\epsilon$, such that
\begin{equation}
\label{eq:obs2} 
\sup_n | E_k^n | \approx 
C^0_{\Delta t} \left( C^1_{\Delta t} \right)^{1+\lceil k/2\rceil}
\left( \frac{\eps}{\Delta t} \right)^{1+\lceil k/2\rceil}.
\end{equation} 
Note that, for $\epsilon=10^{-1}$ (which is quite
a large value compared to $\Delta t = 10^{-1}$), the convergence
is very slow, which is in agreement with the previous observations.

\begin{figure}[htbp]
\includegraphics[scale=0.74]{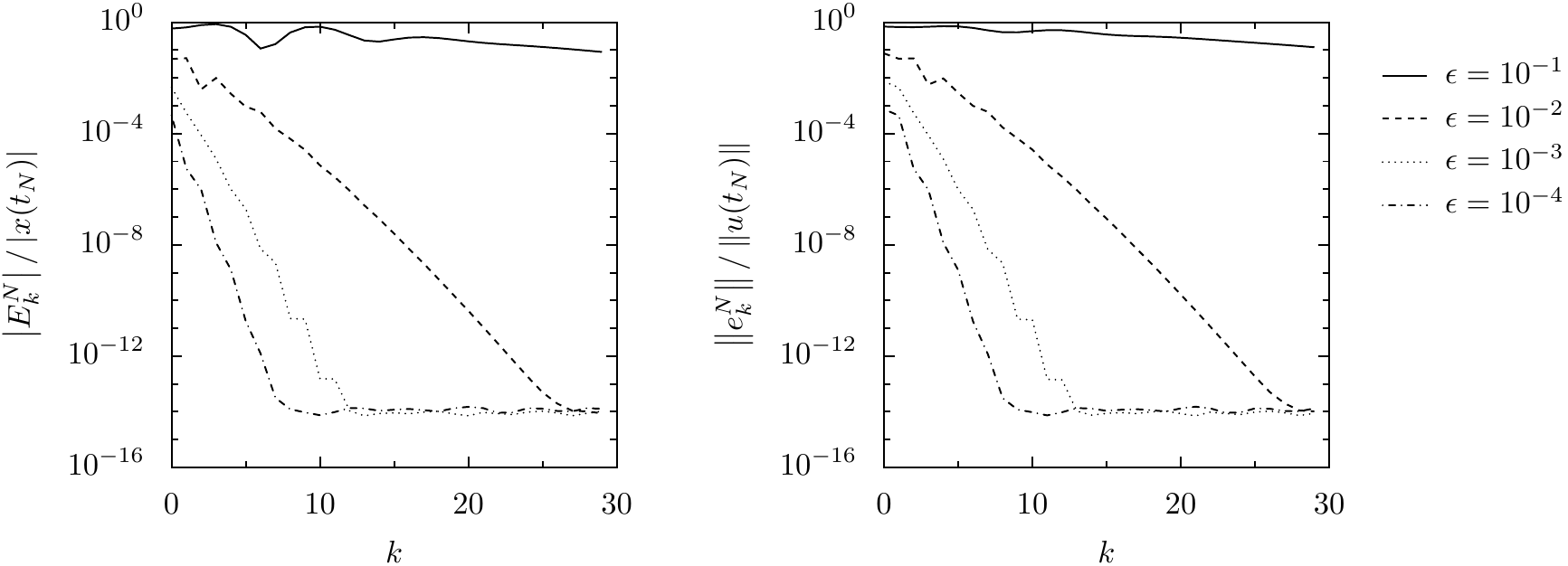}
\caption{\label{fig:num-exact-matching-k}
Algorithm~\ref{algorithm-matching} for the system~\eqref{eq:toy-problem},
with exact fine-scale and coarse propagators and parareal time step
$\Delta t=10^{-1}$: errors as a function of $k$ for different values of
$\epsilon$ (left: macroscopic error; right: microscopic error).}
\end{figure}

To understand how $C^0_{\Delta t}$ and $C^1_{\Delta t}$ depend on
$\Delta t$, we perform another experiment, in which we fix
$\epsilon=10^{-5}$ and vary $\Delta t$.  We then plot the error as a
function of $\Delta t^{-1}$ for different values of $k$ (see
Figure~\ref{fig:num-exact-matching-Dt}). These results show that the 
macroscopic error varies proportionally to $\left( \Delta t^{-1}
\right)^{1+\lceil k/2\rceil}$. Combined with~\eqref{eq:obs2}, we
therefore deduce that, on this numerical test-case, the macroscopic
error satisfies
\begin{equation}
\label{eq:obs3_macro} 
\sup_n | E_k^n | \approx 
C^0_{\rm macro} 
\left( C^1_{\rm macro} \frac{\eps}{\Delta t} \right)^{1+\lceil k/2\rceil},
\end{equation} 
and likewise for the microscopic error:
\begin{equation}
\label{eq:obs3_micro} 
\sup_n | e_k^n | \approx 
C^0_{\rm micro} 
\left( C^1_{\rm micro} \frac{\eps}{\Delta t} \right)^{1+\lfloor k/2\rceil}.
\end{equation}
We in particular see that, if $\eps/\Delta t$ is sufficiently small, then the
parareal trajectory converges to the exact trajectory when $k$ goes to
$\infty$. 

\begin{figure}[htbp]
\includegraphics[scale=0.74]{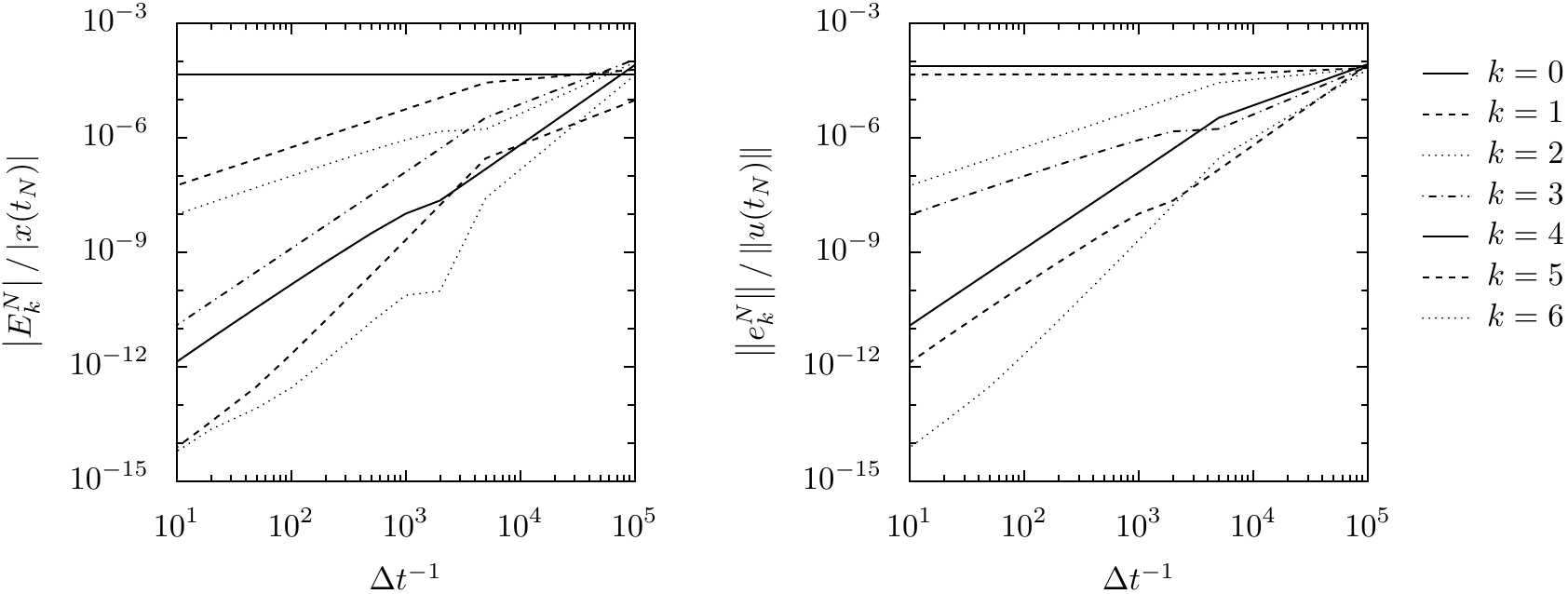}
\caption{\label{fig:num-exact-matching-Dt}
Algorithm~\ref{algorithm-matching} for the system~\eqref{eq:toy-problem},
with exact fine-scale and coarse propagators and  $\epsilon=10^{-5}$: errors as a function of $\Delta t^{-1}$
for different values of $k$ (left: macroscopic error; right:
microscopic error).}
\end{figure}

\subsubsection{Algorithm 3}

To complete these numerical tests, we consider Algorithm 3 originally
proposed in~\cite{BBK,maday41parareal} (which we analyzed in
Section~\ref{sec:anal3}), and repeat the previous experiment. The
results, shown in Figure~\ref{fig:num-exact-maday}, are in agreement
with the theoretical results.

\begin{figure}[htbp]
	\includegraphics[scale=0.76]{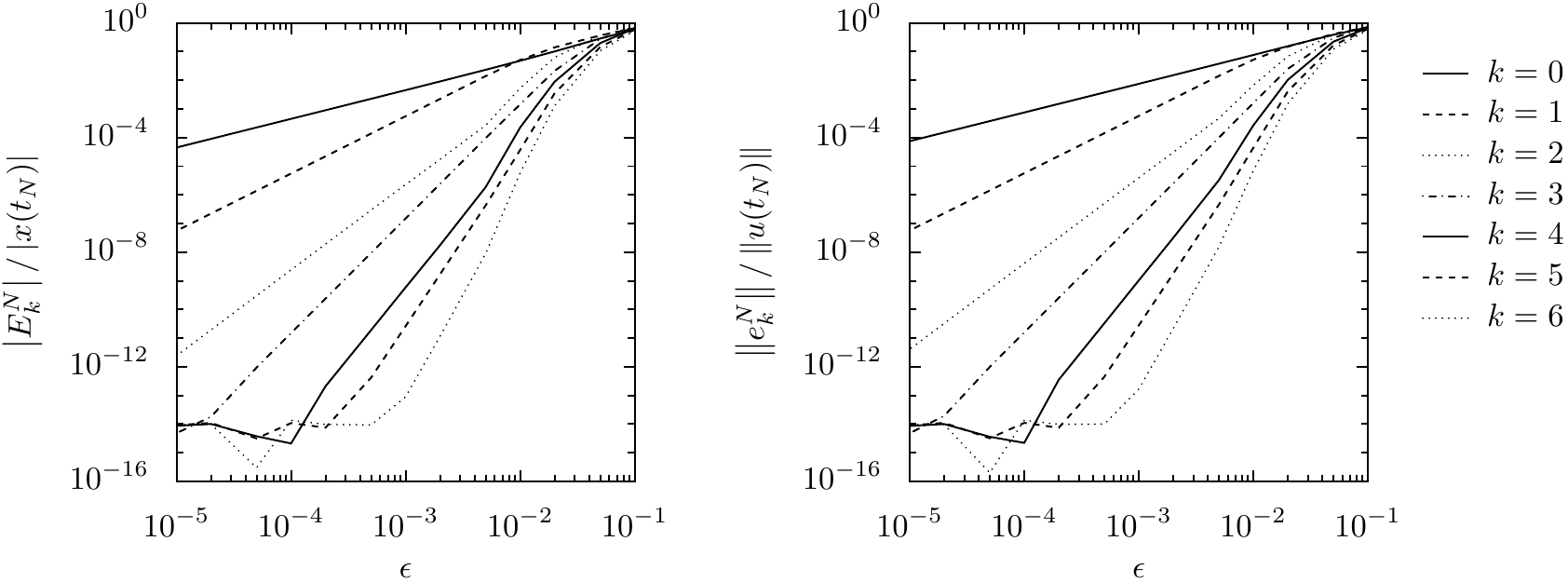}
	\caption{\label{fig:num-exact-maday}
Algorithm 3 for the system~\eqref{eq:toy-problem},
with exact fine-scale and coarse propagators: errors as
a function of $\epsilon$ for different values of $k$ (left: macroscopic
error; right: microscopic error).}
\end{figure}

\subsection{Results with exact microscopic and approximate macroscopic
  integration\label{sec:num-macro-fe}} 

In this section, we repeat the above experiments, but now using a
forward Euler time discretization for the coarse propagator, using the
time step $\Delta t$ (hence, to propagate the system over the time range
$\Delta t$, the scheme $\C_{\Delta t}$ consists in doing a {\em single}
step of the forward Euler  algorithm). The fine-scale propagator is again the
exact one. 

\subsubsection{Algorithm~\ref{algorithm-lifting}}

We first consider Algorithm~\ref{algorithm-lifting} (for which the
reconstruction is done using the lifting operator $\L$), and set the
maximal number of parareal iterations to
$K=3$. Figure~\ref{fig:num-fe-lifting} shows the errors as a function of
$\epsilon$ for the chosen values of $k$.

\begin{figure}[htbp]
	\includegraphics[scale=0.76]{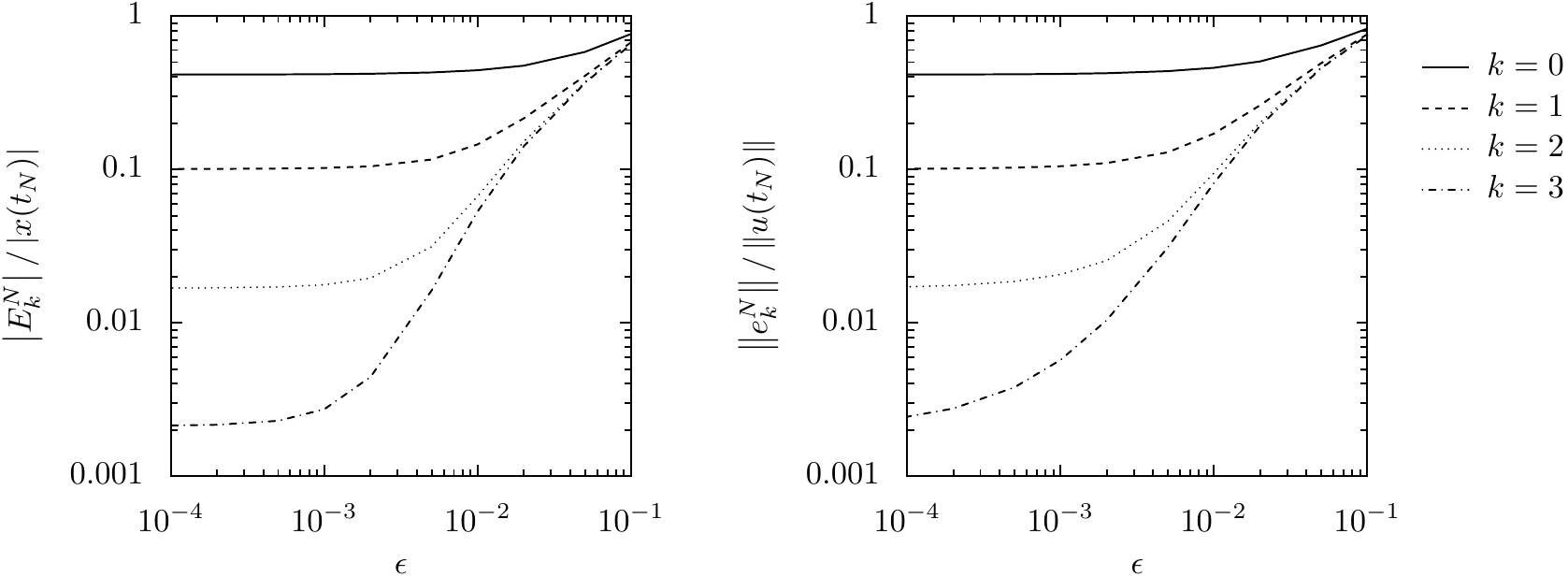}
	\caption{\label{fig:num-fe-lifting}
Algorithm~\ref{algorithm-lifting} for the system~\eqref{eq:toy-problem},
with exact fine-scale propagator and approximate coarse propagator: errors as
a function of $\epsilon$ for different values of $k$ (left: macroscopic error; right: microscopic error).}
\end{figure}

When comparing Figure~\ref{fig:num-fe-lifting} with
Figure~\ref{fig:num-exact-lifting} (in which the macroscopic dynamics
is exactly integrated), we notice that the behavior of the
algorithm is similar for large values of $\epsilon$. For small values of
$\epsilon$, the errors approach an asymptotic value as $\epsilon$ goes to zero
(rather than converging to 0 as in Figure~\ref{fig:num-exact-lifting}),
with an asymptotic value that depends on the number of iterations $k$.
The larger $k$ is, the smaller this asymptotic value is, and the wider
the range of $\epsilon$ for which results of
Figures~\ref{fig:num-fe-lifting} and~\ref{fig:num-exact-lifting} (with
approximate or exact integration at the macroscopic scale) agree. 

This observation is confirmed in Figure~\ref{fig:num-fe-lifting-k},
where we show the errors as a function of the iteration number $k$, $1
\le k \le K=8$, for various values of $\epsilon$.  
We see that the errors first converge exponentially to 0 as $k$
increases, and then reach a plateau. The residual macroscopic
(resp. microscopic) error is of the order of $O(\epsilon^2)$
(resp. $O(\epsilon)$). 

\begin{figure}[htbp]
\includegraphics[scale=0.74]{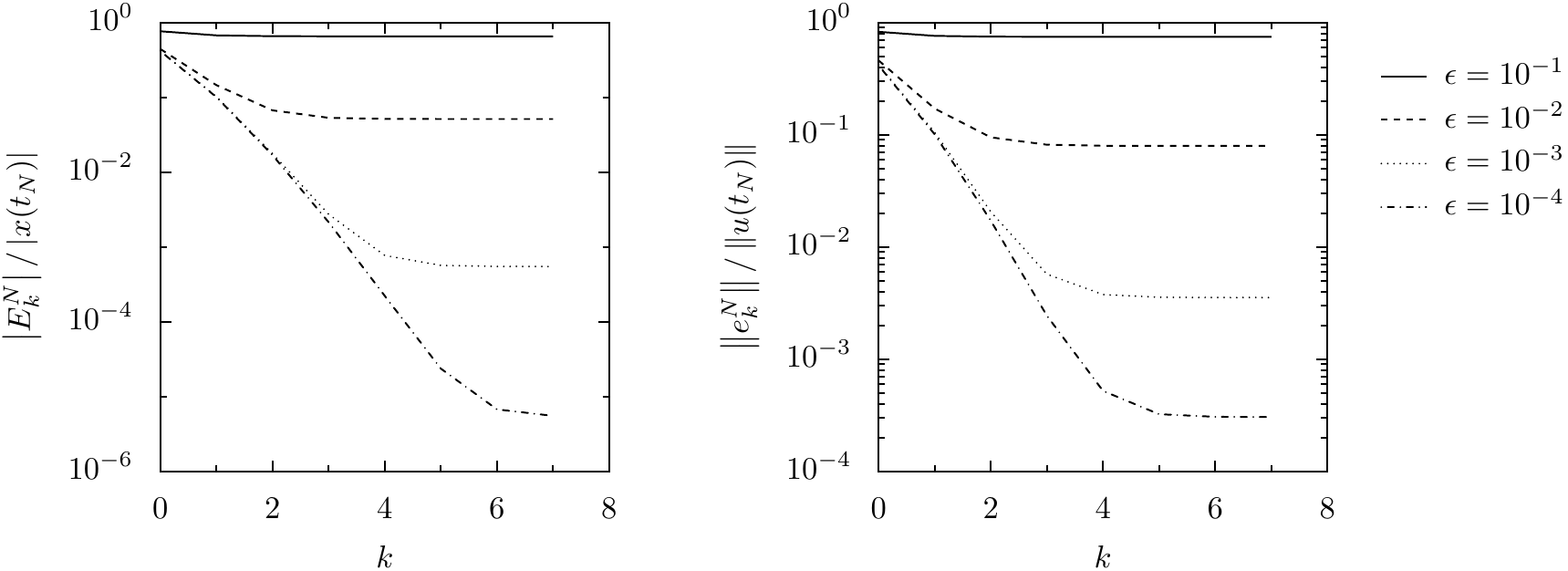}
\caption{\label{fig:num-fe-lifting-k}
Algorithm~\ref{algorithm-lifting} for the system~\eqref{eq:toy-problem},
with exact fine-scale propagator and approximate coarse propagator: errors as
a function of $k$ for different values of $\epsilon$ (left: macroscopic
error; right: microscopic error).} 
\end{figure}

We explain this behavior as follows. The parareal iterations iteratively
correct the approximation made using the coarse propagator. When the
coarse propagator is a forward Euler discretization of the approximate
macroscopic equation, there are two sources of error: a modeling error
(due to the fact that the macroscopic equation~\eqref{eq:lin_macro} is
only an {\em approximation} of the slow part of the reference
model~\eqref{eq:lin_micro}), and a time discretization error. For large
values of $\epsilon$, the modeling error is dominant, and the error
behaves as if the coarse propagator were exact.  For small $\epsilon$,
the time discretization is dominant, and the error becomes therefore
independent of $\epsilon$. Due to the parareal iterations, the time
discretization error is iteratively removed. However, due to the fact
that the reconstruction is performed using the lifting operator $\L$,
the modeling error never vanishes when $\epsilon>0$. 
Hence, when $k$ goes to infinity,
Algorithm~\ref{algorithm-lifting} using an approximate coarse propagator
converges to the solution given by a parareal algorithm with no
time-step discretisation error (this latter has been removed by the
iterations in $k$), but with some modeling error. The solution hence
converges to that given by Algorithm~\ref{algorithm-lifting} with exact
coarse propagation. 

\subsubsection{Algorithm~\ref{algorithm-matching}}

We now consider Algorithm~\ref{algorithm-matching} (for which the
reconstruction is performed using a matching operator $\P$), and set the
maximal number of parareal iterations at
$K=6$. Figure~\ref{fig:num-fe-matching} shows the errors as a function
of $\epsilon$ for the chosen values of $k$. 

\begin{figure}[htbp]
	\includegraphics[scale=0.76]{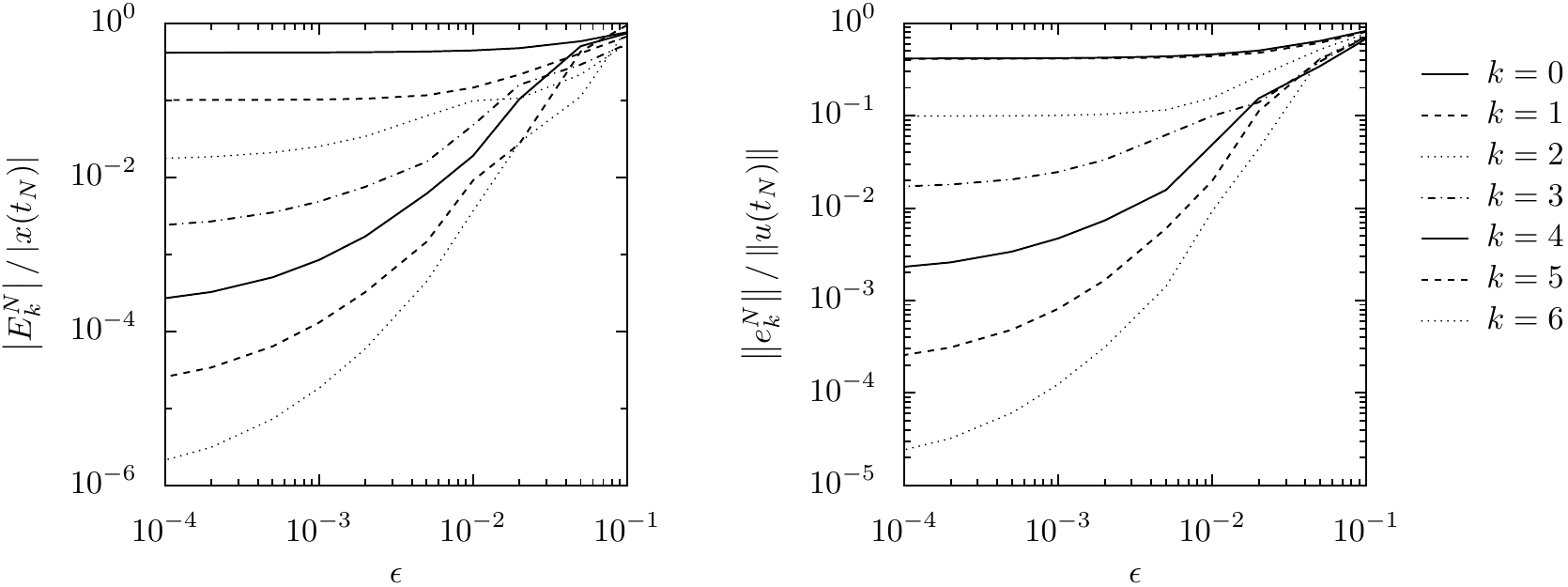}
	\caption{\label{fig:num-fe-matching}
Algorithm~\ref{algorithm-matching} for the system~\eqref{eq:toy-problem},
with exact fine-scale propagator and approximate coarse propagator: errors as
a function of $\epsilon$ for different values of $k$ (left: macroscopic
error; right: microscopic error).}
\end{figure}

We compare Figure~\ref{fig:num-fe-matching} to the corresponding
Figure~\ref{fig:num-exact-matching} (where the macroscopic equation is
exactly integrated). We again notice that, for small
$\epsilon$, the algorithm behaves differently: in particular, the
convergence when $\epsilon$ goes to zero slows down when the macroscopic
equation is only approximately integrated. However, the behavior with
respect to $k$ is left unchanged. We show on 
Figure~\ref{fig:num-fe-matching-k} the evolution of the
errors as a function of the parareal iteration number $k$, $0 \le k \le
K=30$, for a number of fixed values of $\epsilon$. As in 
Figure~\ref{fig:num-exact-matching-k}, the computed
trajectory again converges to the exact microscopic solution up to
machine precision, exponentially with respect to $k$, despite the
presence of time discretization errors at the macroscopic
level. Moreover, comparing these results with those obtained when using an 
exact coarse propagator (see Figure~\ref{fig:num-exact-matching-k}), we
see that only a few extra parareal iterations are needed. 

\begin{figure}[htbp]
	\includegraphics[scale=0.74]{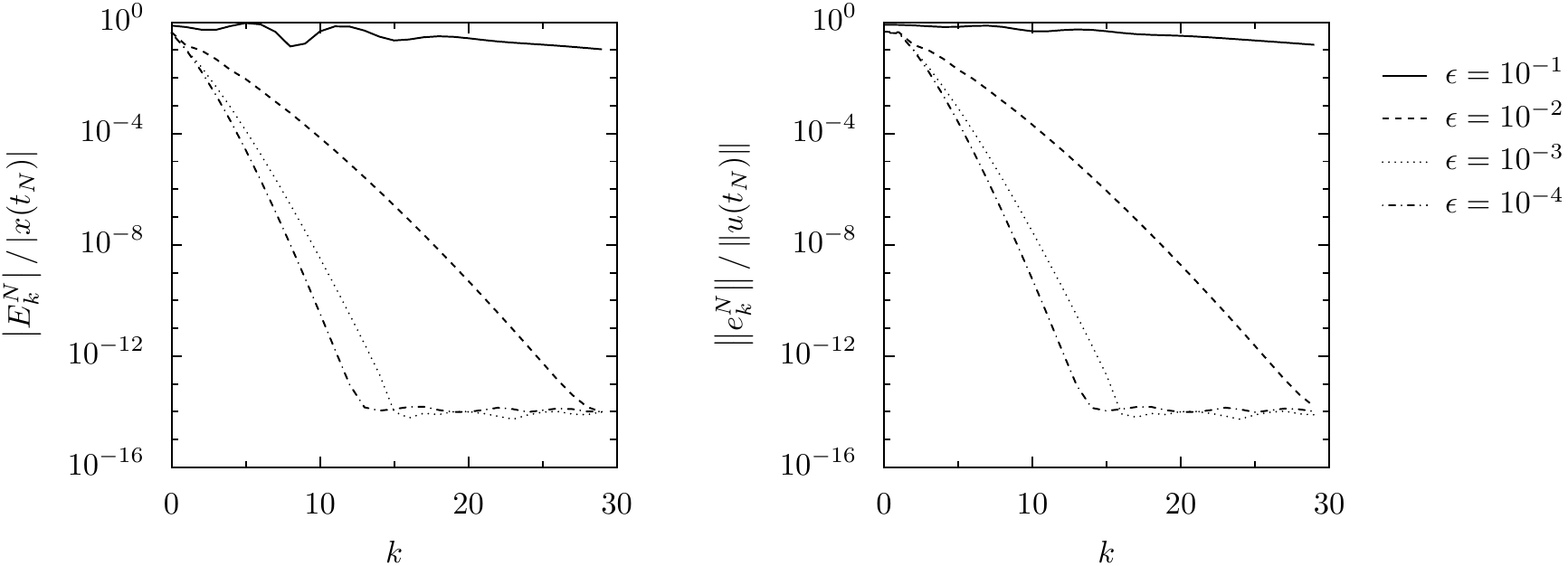}
	\caption{\label{fig:num-fe-matching-k}
Algorithm~\ref{algorithm-matching} for the system~\eqref{eq:toy-problem},
with exact fine-scale propagator and approximate coarse propagator: errors as
a function of $k$ for different values of $\epsilon$ (left: macroscopic
error; right: microscopic error).}
\end{figure}

%
%

\section{Nonlinear examples\label{sec:nonlin}}

We finally illustrate the performance of our
Algorithm~\ref{algorithm-matching} on two nonlinear examples. Such cases
are not covered by the theoretical analysis of
Section~\ref{sec:analysis}, where we considered a linear problem.  

The first nonlinear example we consider is a straightforward extension
of problem~\eqref{eq:lin_micro}, and reads
\begin{equation}
\label{eq:mic_kwad}
	\dot{x}=-\lambda x - y, \qquad 
	\dot{y}=\dfrac{1}{\epsilon}(x^2-y),
\end{equation} 
which is of the form~\eqref{eq:micro}. The corresponding macroscopic
model is 
\begin{equation}
\label{eq:mac_kwad}
	\dot{X}=-\lambda X - X^2. 
\end{equation} 
We use Algorithm~\ref{algorithm-matching} to integrate this system. The
fine propagator $\F_{\Delta t}$ is a forward Euler scheme
for~\eqref{eq:mic_kwad} with the time step $\delta t = 10^{-5}$. The
coarse propagator $\C_{\Delta t}$ is a forward Euler scheme
for~\eqref{eq:mac_kwad} with the time step $\Delta t = 10^{-1}$ (which
is equal to the parareal time step). The lifting operator reads $\L(X) =
(X,X^2)$, and the matching operator is again given
by~\eqref{eq:projection-good}. 
The remaining parameters are chosen
identical to those in the previous experiments. On
Figure~\ref{fig:kwad}, we show the error as a function of $\epsilon$ for
different values of $k$. Comparing that figure with
Figure~\ref{fig:num-exact-matching}, we see that
Algorithm~\ref{algorithm-matching} performs equally well on this
nonlinear case as on the linear problem considered in
Section~\ref{sec:num}.

\begin{figure}[htbp]
	\includegraphics[scale=0.74]{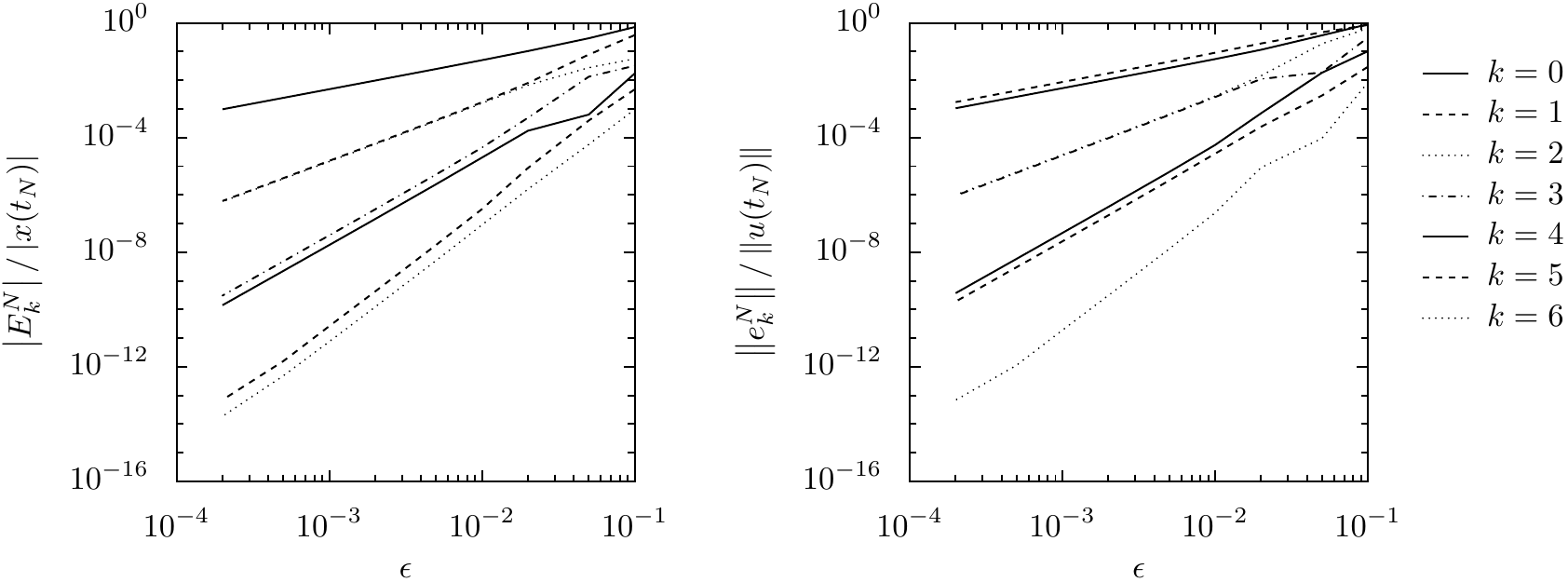}
	\caption{\label{fig:kwad}
Algorithm~\ref{algorithm-matching} for the system~\eqref{eq:mic_kwad},
with parareal time step $\Delta t = 10^{-1}$: errors as
a function of $\epsilon$ for different values of $k$ (left: macroscopic
error; right: microscopic error).}
\end{figure}

The second nonlinear example we consider is the so-called Brusselator
problem, which was already considered in
e.g.~\cite{gear2003projective}. It reads 
\begin{align}
	\dot{x}_1&=A-(y+1)x_1+x_1^2x_2,\nonumber\\
	\dot{x}_2&=yx_1-x_1^2x_2,\label{eq:brus_mic}\\
	\dot{y}&=\dfrac{1}{\epsilon}(B_0-y)-yx_1.
\nonumber
\end{align}
It models the evolution of the concentration of three chemical species.
The concentration of $y$ is reduced via reaction with
$x$, but restored to its base level $B_0$ with a characteristic time of
the order of $\epsilon$. We choose $A=1$ and $B_0=3$.  The fine propagator $\F_{\Delta t}$ is a forward Euler
discretization of~\eqref{eq:brus_mic} with the time step $\delta t =
10^{-5}$. The coarse propagator $\C_{\Delta t}$ is a forward Euler
discretization of the macroscopic model
$$
\dot{x}_1=A-(B_0+1)x_1+x_1^2x_2, \qquad \dot{x}_2=B_0 \, x_1-x_1^2x_2,
$$
with the time step $\Delta t=10^{-1}$ (equal to the parareal time step).  
This system thus has two slow variables and one fast one: $u=(x,y)$, with $x=(x_1,x_2)$. Note that this case does not enter our theoretical framework for two
reasons: (i) the example is nonlinear; and (ii) the equation for $y$ in
the microscopic model is not purely a fast equation (the second term in
the right-hand side of the equation for $\dot{y}$ in~\eqref{eq:brus_mic}
is {\em not} scaled by $\epsilon^{-1}$).

We show on Figure~\ref{fig:brus} the results obtained. 
Algorithm~\ref{algorithm-matching} again performs very well. Actually,
on this problem, the convergence behavior of
Algorithm~\ref{algorithm-matching} closely resembles that of
Algorithm~3: at parareal iteration $k$, the order of convergence (in
terms of $\epsilon$) seems to be equal to $k$, both for the macroscopic
and the microscopic errors. 

\begin{figure}[htbp]
	\includegraphics[scale=0.74]{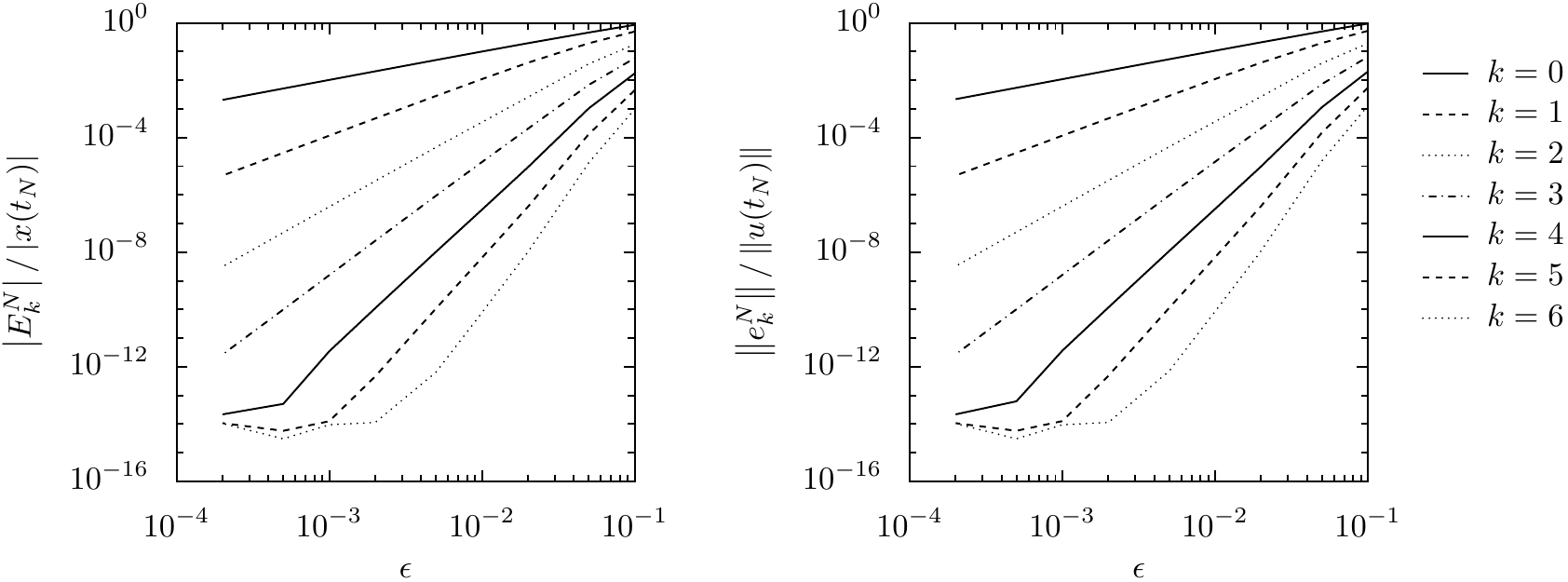}
	\caption{\label{fig:brus}
Algorithm~\ref{algorithm-matching} for the system~\eqref{eq:brus_mic},
with parareal time step $\Delta t = 10^{-1}$: errors as
a function of $\epsilon$ for different values of $k$ (left: macroscopic
error; right: microscopic error).}
\end{figure}

\section{Discussion and conclusions\label{sec:concl}}

We have introduced and analyzed two micro-macro parareal algorithms for the
time-parallel integration of singularly perturbed ordinary differential
equations, and provided a numerical analysis for the case where the
problem is linear and the
coarse and fine-scale propagators integrate the macroscopic,
resp.~microscopic models exactly. The analysis shows that, when an
\emph{appropriate matching operator} is used to update the microscopic
state after correction of the macroscopic state (which corresponds to
using Algorithm~\ref{algorithm-matching}), the rate of convergence (in
terms of the modelling error $\epsilon$, which 
quantifies the time scale separation between the microscopic and the
macroscopic evolutions) improves at each parareal
iteration, and is roughly equal to $\epsilon^{k/2}$. We have illustrated
this theoretical result with 
numerical experiments, and also numerically investigated the effect of
using a numerical scheme to integrate the macroscopic model (thereby
introducing some discretization error). 
The results show that the proposed micro-macro parareal algorithm,
Algorithm~\ref{algorithm-matching},  
is robust with respect to time discretization errors at the macroscopic level. 
It can hence be viewed as 
an interesting way of using an approximate macroscopic model to
speed up simulations of high-dimensional multiscale systems of
singularly perturbed ordinary differential equations, provided that
special care is taken when transferring information between the
microscopic and macroscopic levels of description. 

Several questions remain open. First, while the analysis reveals that it
is important to choose the parareal time step $\Delta t$ sufficiently
large (to average out the initial time boundary layers in the full
microscopic dynamics), the dependence on $\Delta t$ of the
numerical error and the convergence rate have not been analyzed.  
In particular, one may expect an optimal time step $\Delta t$ to exist
that leads to a required accuracy with a minimal cost. Assume again
(as for the original parareal algorithm, see the introduction) that the cost
of a single evaluation of the fine-scale propagator $\F_{\Delta t}$ is much
larger than the cost of propagating the macroscopic system using
$\C_{\Delta t}$ over the complete time range $[0,T]$ (This assumption is
all the more justified as the macroscopic system is a low-dimensional
problem compared to the microscopic problem). Then the cost of the
parareal algorithm, after $K$ iterations, is proportional to $K
\Delta t / \epsilon$ (We have assumed that the cost of $\F_{\Delta t}$ is
proportional to $\Delta t / \epsilon $, since we need to use a time step
of the order of $\epsilon$ over a time range of length $\Delta
t$). This cost is to be compared with the cost of the full microscopic
sequential integration, which is proportional to $N\Delta
t/\epsilon$. The computational speed-up is thus $N/K$. We saw on 
Figure~\ref{fig:num-exact-matching} that, for reasonably small values
of $\epsilon$, results obtained at the iteration $K = 6$ were
satisfactory. For the test-case considered in
Section~\ref{sec:num_algo2}, the computational speed-up is thus 
$$
\frac{N}{K} = \frac{T/\Delta t}{K} = 16.6.
$$

Second, we expect Algorithm~\ref{algorithm-matching} to extend to more general
dissipative systems. As pointed out above, we considered here the simple
linear 
problem~\eqref{eq:lin_micro} to focus on the issues stemming from using
two different levels of description of the same system. We have already
checked in Section~\ref{sec:nonlin} that
Algorithm~\ref{algorithm-matching} behaves equally well on nonlinear
systems of singularly perturbed ODEs. We currently
investigate the extension
of the algorithm to a setting, motivated by molecular simulations, where
the reference model is a high-dimensional stochastic differential
equation (modeling the evolution of all the degrees of freedom of the
atomistic system) and the macroscopic model is the effective dynamics of
the slow 
component of the microscopic model, derived under time scale separation
assumptions following~\cite{legoll2010effective,legoll2011lncse}.

\section*{Acknowledgements}

FL and TL thank Sorin Mitran for enlightening discussions that eventually
led to this work. All authors thank the anonymous referees for their
comments that lead to a substancial improvement of the manuscript. 
Part of this work was performed during a research stay of GS
at CERMICS (ENPC -- Paris), when he was a Postdoctoral Fellow of the Research Foundation
-- Flanders (FWO). GS warmly thanks the whole CERMICS team for
its hospitality, and both CERMICS and FWO for funding this stay. 
This work was (partially) completed while FL and TL were
visiting the Institute for Mathematical Sciences, National University of
Singapore in 2012. 
This work was partially supported by the Research Council of the K.U. Leuven
through grant OT/09/27, by the Interuniversity Attraction Poles
Programme of the Belgian Science Policy Office under grant IUAP/V/22,
and by the Agence Nationale de la
Recherche under grant ANR-09-BLAN-0216-01 (MEGAS).
The scientific responsibility rests with its authors.




\bibliographystyle{plain}
\bibliography{bibliography}

\appendix

\section{Proofs of Lemma~\ref{lem:slaving} and
  Corollary~\ref{cor:slaving}} 
\label{sec:appendix}

Before proving Lemma~\ref{lem:slaving} and Corollary~\ref{cor:slaving},
we start with a preliminary result. Here and in all what follows,
$\|\cdot\|$ denotes the Euclidean norm when applied to vectors, and the
associated operator norm when applied to matrices.

\smallskip

\begin{lem}
\label{lem:jordan}
Let $M$ be a matrix in $\RR^{d \times d}$ such that the real part of the
spectrum of $M$ is positive. Then, there exist $C>0$ and $\mu>0$ such
that, for all time $t \ge 0$,
\begin{equation}
\label{eq:bound_exp_A}
\| \exp(-Mt) \| \le C \exp(-\mu t).
\end{equation}
One can choose $\mu=\inf_{\nu \in \sigma(M)} \Re(\nu) /2$, where
$\sigma(M)$ denotes the spectrum of $M$. We also have
\begin{equation}
\label{eq:bound_A_inv}
\left\| M^{-1} \right\| \le \frac{C}{\mu}.
\end{equation}
\end{lem}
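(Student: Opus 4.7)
The plan is to use the Jordan normal form to reduce everything to estimates on individual Jordan blocks. Write $M = P J P^{-1}$ where $J = \mathrm{diag}(J_1,\dots,J_r)$ is the Jordan decomposition, with each block $J_k = \lambda_k I_{n_k} + N_k$, $\lambda_k \in \sigma(M)$ and $N_k$ nilpotent of order $n_k$. Then $\exp(-Mt) = P \exp(-Jt) P^{-1}$, so it suffices to bound $\|\exp(-J_k t)\|$ uniformly in $k$, the price being a multiplicative constant $\|P\|\,\|P^{-1}\|$.

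For each block, since $\lambda_k I$ and $N_k$ commute, I would write
\[
\exp(-J_k t) = e^{-\lambda_k t}\,\exp(-N_k t) = e^{-\lambda_k t}\sum_{j=0}^{n_k-1} \frac{(-t)^j}{j!}\, N_k^j.
\]
Thus $\|\exp(-J_k t)\| \le e^{-\Re(\lambda_k) t}\, Q_k(t)$, where $Q_k(t)$ is a polynomial of degree at most $n_k - 1$ with nonnegative coefficients. Setting $\mu_0 := \inf_{\nu \in \sigma(M)} \Re(\nu) > 0$ and $\mu := \mu_0/2 > 0$, the key observation is that for every polynomial $Q$ with nonnegative coefficients and every $\delta>0$, one has $\sup_{t\ge 0} Q(t) e^{-\delta t} < \infty$. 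Applying this with $\delta = \mu_0 - \mu = \mu_0/2$ to each block yields constants $C_k$ with $\|\exp(-J_k t)\| \le C_k e^{-\mu t}$ for all $t \ge 0$. Assembling these bounds via $P$ and $P^{-1}$ produces a constant $C$ such that $\|\exp(-Mt)\| \le C e^{-\mu t}$, which is the first claim.

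For the bound on $M^{-1}$, I would use the fact that, since the spectrum of $M$ lies in the right half-plane, $M$ is invertible and admits the integral representation
\[
M^{-1} = \int_0^{\infty} \exp(-Mt)\,\d t,
\]
which follows from $\frac{\d}{\d t}(-M^{-1}\exp(-Mt)) = \exp(-Mt)$ combined with the decay estimate just proved (so the integral converges absolutely and evaluating at the endpoints gives $M^{-1} - 0$). Taking norms and inserting $\|\exp(-Mt)\| \le C e^{-\mu t}$ yields
\[
\|M^{-1}\| \le \int_0^\infty C e^{-\mu t}\,\d t = \frac{C}{\mu},
\]
which is the second claim.

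The only genuine obstacle is the polynomial-versus-exponential estimate that controls the nilpotent part of each Jordan block; once one splits off half of the spectral gap to absorb this polynomial growth, everything else is bookkeeping. Note also that the decomposition $M = PJP^{-1}$ and the constants $\|P\|$, $\|P^{-1}\|$, $C_k$ all depend on $M$ but not on $t$, which is exactly what the statement allows.
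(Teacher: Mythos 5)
Your proposal is correct and follows essentially the same route as the paper: a Jordan decomposition of $M$, the observation that the nilpotent part of each block contributes only polynomial growth (which is absorbed by sacrificing half of the spectral gap, giving $\mu = \inf_{\nu \in \sigma(M)} \Re(\nu)/2$), and the integral representation $M^{-1} = \int_0^\infty \exp(-Mt)\,\d t$ for the bound on the inverse. The only cosmetic difference is that the paper writes the block exponential explicitly as $\exp(\lambda t)P_m(t)$ with $P_m$ the upper-triangular matrix of monomials $t^j/j!$, while you express the same object through the truncated series $\exp(-N_k t)$; the content is identical.
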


\begin{proof} 
We introduce the Jordan form of the matrix $M$. Let us assume for 
simplicity of notation that $M$ has only two Jordan blocks associated to
two complex eigenvalues $\lambda_1 \neq \lambda_2$ with 
$0 < \Re(\lambda_1) \leq \Re(\lambda_2)$. The generalization to any
number of Jordan blocks is straightforward. Let us denote $\mu=\inf_{\nu
  \in \sigma(M)} \Re(\nu) /2 = \Re(\lambda_1)/2 >0$.

Since $M$ has only two Jordan blocks, there exists an
invertible matrix $Q \in \RR^{d \times d}$ such that
$$
M = Q^{-1} \left[ 
\begin{array}{cc} N_{d_1}(\lambda_1) & 0 \\ 0 & N_{d_2}(\lambda_2) 
\end{array} \right] Q,
$$
where $d_1+d_2 = d$, and, for any $m \in \N^\star$ and any $\lambda
\in \CC$,
$$
N_m(\lambda) = \left[
\begin{array}{cccccc}
\lambda & 1 & 0 & 0 & & \\
0 & \lambda & 1 & 0 & & \\
& & \dots & & & \\
& & & \dots & & \\
&&& 0 & \lambda & 1 \\
&&&& 0 & \lambda \\
\end{array}
\right] \in \RR^{m \times m}.
$$
We compute, for any $t \in \RR$,
\begin{equation}
\label{eq:decompo_A}
\exp(Mt) 
=
Q^{-1}
\left[ \begin{array}{cc} 
\exp(\lambda_1 t) 
P_{d_1}(t) & 0 
\\
0 & \exp(\lambda_2 t) P_{d_2}(t)
\end{array} \right]
Q,
\end{equation}
where, for any $m \in \N^\star$ and any $t \in \RR$,
$$
P_m(t) = \left[
\begin{array}{ccccccc}
1 & t & t^2/2 & t^3/6 & \dots & \dots & t^{m-1}/((m-1)!)\\
0 & 1 & t & t^2/2 & \dots & \dots & t^{m-2}/((m-2)!)\\
& & \dots & & & & \\
& & & \dots & & & \\
&&& & 0 & 1 & t \\
&&&& & 0 & 1 \\
\end{array}
\right] \in \RR^{m \times m}.
$$
Since $P_m(t)$ is a matrix with entries which are polynomial functions
of $t$, there exists a constant $C$ that only depends on the matrix $M$
such that
$$
\forall t \ge 0, \quad
\| P_{d_1}(-t) \| + \| P_{d_2}(-t) \| \leq 
C \exp( \mu t).
$$
We then infer from~\eqref{eq:decompo_A} that there exists a constant
that only depends on $M$ such that 
\begin{equation*}
\forall t \ge 0, \quad
\left\| \exp(-Mt) \right\|
\leq
C \exp( -\mu t).
\end{equation*}
This yields~\eqref{eq:bound_exp_A}. Then,~\eqref{eq:bound_A_inv} is
obtained using the fact that 
$$
\left\| M^{-1} \right\|
= 
\left\| \int_0^\infty \exp(-Mt) \,dt \right\|
\le  
\int_0^\infty \left\| \exp(-Mt)\right\| \,dt
\le 
C  \int_0^\infty \exp(-\mu t) \,dt
=
\frac{C}{\mu}.
$$
This concludes the proof of Lemma~\ref{lem:jordan}.
\end{proof}

\medskip

We are now in position to prove Lemma~\ref{lem:slaving} and
Corollary~\ref{cor:slaving}. 

\subsection*{Proof of Lemma~\ref{lem:slaving}}

We start by writing 
\begin{align}
\dot{z}
&=\dot{y}-A^{-1}q\,\dot{x}
\nonumber
\\
&
= \frac{1}{\epsilon}\left(qx-Ay\right)-A^{-1}q\left(\alpha x+p^Ty\right)
\nonumber
\\
&=-\left[\frac{A}{\epsilon}+ \left( A^{-1}q \right) p^T\right] z -
\lambda \left( A^{-1} q \right) x,
\label{eq:debut}
\end{align}
where $\lambda$ is defined by~\eqref{eq:lin_macro}. Introducing
$$
M^\epsilon := A + \epsilon
\left( A^{-1}q \right) p^T \in \RR^{(d-1) \times (d-1)},
\quad
V := \lambda \left( A^{-1} q \right) \in \RR^{(d-1)},
$$
we recast~\eqref{eq:debut} as
\begin{equation}
\label{eq:dyn_z}
\dot{z}
=
- \frac{ M^\epsilon }{\epsilon} z - V x.
\end{equation}
From the definition of $M^\epsilon$, and in view of
Assumption~\eqref{ass:expo-stable}, it is clear that there exists a
critical value $\epsilon_0(A,q,p)$ such that for all
$\epsilon<\epsilon_0(A,q,p)$, the matrix $M^\epsilon$ has a spectrum
with a real part bounded from below by $\lambda_-/2 > 0$, where
$\lambda_-$ is independent of $\epsilon$. Up to a modification of
$\epsilon_0(A,q,p,\alpha)$, the same property holds true for the matrix
$M^\epsilon+ \epsilon \lambda\Id$ that will appear below (where
$\lambda$ is defined by~\eqref{eq:lin_macro}). In the sequel of the
proof, we will systematically work with
$\epsilon<\epsilon_0(A,q,p,\alpha)$. 

By explicit integration of~\eqref{eq:dyn_z}, we have
\begin{equation}
z(t) - \exp(-M^\epsilon t/ \epsilon) z_0 = - 
\int_0^t \exp\left[-M^\epsilon(t-s) / \epsilon \right] \, V \, x(s)
\, ds.
\label{eq:z-t0}
\end{equation}
From~\eqref{eq:lin_micro}, we have $\dot{x} = \lambda x + p^T z$.
Using equation~\eqref{eq:z-t0}, we thus obtain
\begin{align}
x(t) - x_0 \exp(\lambda t) 
&= p^T\int_0^t \exp\left(\lambda(t-s)\right) z(s) ds 
\nonumber\\
&= p^T \int_0^t \exp\left(\lambda(t-s)\right)
\exp\left(-M^\epsilon s / \epsilon \right) z_0 ds 
\nonumber\\
&\quad - p^T \int_0^t \exp\left(\lambda(t-s)\right) \int_0^s
\exp\left(-M^\epsilon(s-r) / \epsilon \right) V x(r) drds. 
\label{eq:x-t}
\end{align}
To bound the first term of~\eqref{eq:x-t}, we write, using 
Lemma~\ref{lem:jordan},
\begin{align}
&\left\| \int_0^t \exp\left(\lambda(t-s)\right) \exp\left(-M^\epsilon s / \epsilon \right) ds \right\|
\nonumber
\\
&
= \exp(\lambda t) \left\| \int_0^t \exp\left(-\left(M^\epsilon/ \epsilon
      +\lambda\Id\right) s\right) ds \right\|
\nonumber
\\
& \le \left\| \left(M^\epsilon / \epsilon +\lambda\Id\right)^{-1} \right\|
\left\| \exp(\lambda t) \Id - \exp\left(-\left(M^\epsilon / \epsilon \right) t\right)
\right\|
\nonumber
\\
& \le \eps \left\| \left(M^\epsilon + \eps \lambda\Id\right)^{-1} \right\|
\Big( \left\| \exp(\lambda t) \Id \right\| + 
\left\| \exp\left(-\left(M^\epsilon / \epsilon \right) t\right) \right\|
\Big) 
\nonumber
\\
& \le \eps \frac{C(A,q,p,\alpha)}{\lambda_-/4} \Big( \exp(\lambda T) + 
C(A,q,p,\alpha) \exp\left(-\lambda_- t / (4\epsilon) \right) \Big)
\nonumber 
\\
& \le C(A,q,p,\alpha,T) \, \epsilon,
\label{eq:second-term}
\end{align}
when $\epsilon \leq \epsilon_0(A,q,p,\alpha)$. Turning to the second term
of~\eqref{eq:x-t}, we use Fubini's theorem, and write 
\begin{eqnarray*}
& & \int_0^t \exp\left(\lambda (t-s)\right) \int_0^s
\exp\left(-M^\epsilon(s-r)/ \epsilon \right) V \, x(r) \, drds
\\
&=&
\exp(\lambda t) \int_0^t \exp(M^\epsilon r/ \epsilon) \left[ \int_r^t
  \exp\left(-(M^\epsilon / \epsilon +\lambda\Id)s\right) ds \right] V \, x(r)
\, dr
\\
&=&
\exp(\lambda t) \int_0^t \exp(M^\epsilon r / \epsilon)
\Bigl[ \exp\left(-(M^\epsilon / \epsilon +\lambda\Id)r\right) -
\exp\left(-(M^\epsilon / \epsilon +\lambda\Id)t\right) \Bigr]
\\
&& \hspace{5cm}
\times \left(M^\epsilon / \epsilon +\lambda\Id\right)^{-1} V \, x(r)
\, dr
\\
&=&	
\int_0^t \bigl[
\exp\left(\lambda (t-r) \right) \Id 
-\exp\left(-M^\epsilon (t-r) / \epsilon \right)
\bigr] \left(M^\epsilon / \epsilon +\lambda\Id\right)^{-1} V \, x(r) \, dr.
\end{eqnarray*}
Therefore, for $\epsilon \leq \epsilon_0(A,q,p,\alpha)$, using
Lemma~\ref{lem:jordan}, we obtain
\begin{align}
& \left\| \int_0^t \exp\left(\lambda (t-s)\right) \int_0^s
  \exp\left(-M^\epsilon(s-r) / \epsilon \right) V x(r)drds
\right\|
\nonumber
\\
&\le \left\| \left( M^\epsilon / \epsilon +\lambda\Id \right)^{-1} \right\| \
\left\| V \right\| \ \sup_{0\le r\le t} |x(r)|
\int_0^t \! \! \! \left\| \exp\left(\lambda (t-r) \right) \Id 
	-\exp\left(-M^\epsilon (t-r) / \epsilon\right) \right\| dr
\nonumber\\
&\le 
\eps \left\| \left( M^\epsilon +\eps \lambda\Id \right)^{-1} \right\|
C(A,q,p,\alpha,T) \,  
\sup_{0\le r\le t}|x(r)| 
\nonumber\\
&\le C(A,q,p,\alpha,T) \, \epsilon \, \sup_{0\le r\le t}|x(r)|.
\label{eq:third-term}
\end{align}
Combining equations~\eqref{eq:x-t},~\eqref{eq:second-term}
and~\eqref{eq:third-term}, we get 
\begin{align*}
|x(t)-x_0\exp(\lambda t)| 
& \le 
C(A,q,p,\alpha,T) \| p \| \, \| z_0 \| \, \eps 
+
C(A,q,p,\alpha,T) \, \| p \| \, \eps \sup_{0\le r\le t}|x(r)|
\\
& \le 
C(A,q,p,\alpha,T) \epsilon \left( \| z_0 \| 
+
\sup_{0\le r\le t}|x(r)| \right),
\end{align*}
and hence,
\begin{align*}
& \sup_{0\le t \le T} |x(t)-x_0\exp(\lambda t)| 
\\
&\le 
C(A,q,p,\alpha,T) \ \epsilon \left( 
\| z_0 \| + \sup_{0\le r\le T}|x(r)| \right)
\\
& \le 
C(A,q,p,\alpha,T) \ \epsilon \left(
\| z_0 \| + |x_0| + \sup_{0\le r\le T} |x(r) - x_0\exp(\lambda r)| \right).
\end{align*}
We deduce that, for $\epsilon \leq \epsilon_0(A,q,p,\alpha,T)$,
\begin{align}
\sup_{0\le t \le T} |x(t)-x_0\exp(\lambda t)| 
&\le
\frac{C(A,q,p,\alpha,T) \, \epsilon}{1-C(A,q,p,\alpha,T) \, \epsilon} 
\ (\|z_0\|+|x_0|)
\nonumber
\\
&\leq
\overline{C}(A,q,p,\alpha,T) \ \epsilon \ (\|z_0\|+|x_0|).
\label{eq:est-x}
\end{align}
This proves~\eqref{eq:x-est-lem}. 

\medskip

We now turn to proving the bound~\eqref{eq:z-est-lem-bl} on $z(t)$. 
Introducing
$$
B := \left( A^{-1}q \right) p^T \in \RR^{(d-1) \times (d-1)},
$$
we now recast~\eqref{eq:debut} as
$$
\dot{z}
=
-\left[\frac{A}{\epsilon}+ B \right] z - V x
=
- \frac{A}{\epsilon} z - \left[ B z + V x \right].
$$
By explicit integration, we have
\begin{equation}
z(t) - \exp(-At/\epsilon) z_0 = -
\int_0^t \exp\left[-A(t-s)/\epsilon\right] \left( B z(s) + V x(s)
\right) ds.
\label{eq:z-t}
\end{equation}
Using~\eqref{eq:bound_exp_A} and Assumption~\eqref{ass:expo-stable}, we obtain
\begin{eqnarray*}
\left\| z(t) - \exp(-At/\epsilon) z_0 \right\|
&\leq & C(A)
\int_0^t \exp \left[ \lambda_- \dfrac{s-t}{2\epsilon} \right] 
\left\| B z(s) + V x(s) \right\| ds
\nonumber
\\
&\leq &
C(A,q,p,\alpha) 
\int_0^t 
\exp\left[ \lambda_- \dfrac{s-t}{2\epsilon} \right]
\left( \| z(s) \| + | x(s) | \right)
ds
\nonumber
\\ 
&\leq &
C(A,q,p,\alpha) \sup_{s \in [0,t]} \left( \| z(s) \| + | x(s) | \right)
\dfrac{2 \epsilon}{\lambda_-}
\nonumber
\\ 
&\leq &
\epsilon C(A,q,p,\alpha) \left(  
\sup_{s \in [0,t]} \| z(s) - \exp(-As/\epsilon) z_0 \| 
\right.
\nonumber
\\
&& \hspace{1cm} \left. + 
\sup_{s \in [0,t]} \| \exp(-As/\epsilon) z_0 \| 
+
\sup_{s \in [0,t]} | x(s) | 
\right).
\nonumber
\end{eqnarray*}
Taking the supremum over $t \in [0,T]$, we obtain, for $\epsilon \leq
\epsilon_0(A,q,p,\alpha)$, 
\begin{eqnarray*}
\sup_{t \in [0,T]} \| z(t) - \exp(-At/\epsilon) z_0 \| 
&\leq &
\frac{\epsilon C(A,q,p,\alpha)}{1 - \epsilon C(A,q,p,\alpha)}  
\left(  
\sup_{s \in [0,T]} \| \exp(-As/\epsilon) z_0 \| 
+
\sup_{s \in [0,T]} | x(s) | 
\right)
\nonumber
\\
&\leq &
\frac{\epsilon C(A,q,p,\alpha)}{1 - \epsilon C(A,q,p,\alpha)}  
\left(  
C(A) \| z_0 \| 
+
\sup_{s \in [0,T]} | x(s) | 
\right).
\nonumber
\end{eqnarray*}
We then deduce from~\eqref{eq:est-x} that, for $\epsilon \leq
\epsilon_0(A,q,p,\alpha,T)$,
 \begin{eqnarray}
\sup_{t \in [0,T]} \| z(t) - \exp(-At/\epsilon) z_0 \| 
&\leq &
\frac{\epsilon C(A,q,p,\alpha,T)}{1 - \epsilon C(A,q,p,\alpha)}  
\left(  
\| z_0 \| + | x_0 | 
\right)
\nonumber
\\
&\leq &
\epsilon \overline{C}(A,q,p,\alpha,T)
\left( \| z_0 \| + | x_0 | \right).
\label{eq:z-t4}
\end{eqnarray}
This proves~\eqref{eq:z-est-lem-bl}. 

\medskip

We finally turn to
proving~\eqref{eq:z-est-lem}. Using~\eqref{eq:bound_exp_A}, we see that 
$$
\| \exp(-At/\epsilon) \| \leq 
C(A) \exp( -\lambda_- t/(2\epsilon)),
$$
thus, for times $\dps t \geq t^{\rm BL}_\eps = \frac{2\eps}{\lambda_-}
\ln(1/\eps)$, we have 
$\| \exp(-At/\epsilon) \| \leq C(A) \epsilon$. We then deduce
from~\eqref{eq:z-t4} the bound~\eqref{eq:z-est-lem}. This concludes the
proof of Lemma~\ref{lem:slaving}. 

\medskip

\subsection*{Proof of Corollary~\ref{cor:slaving}}

The first assertion follows directly from~\eqref{eq:x-est-lem} and the
fact that $\| z_0 \| \leq \| y_0 \| + C | x_0 |$. The second assertion
follows from~\eqref{eq:z-est-lem} and~\eqref{eq:x-est-cor}.

\end{document}